\renewcommand\footnotemark{}
\theoremstyle{plain}
\newtheorem{theorem}{Theorem}[section]
\newtheorem{lemma}[theorem]{Lemma}
\newtheorem{proposition}[theorem]{Proposition}
\newtheorem{corollary}[theorem]{Corollary}
\theoremstyle{definition}
\newtheorem{definition}[theorem]{Definition}
\theoremstyle{remark}
\def\th@plain{%
  \thm@notefont{}
  \itshape 
}
\def\th@definition{%
  \thm@notefont{}
  \normalfont 
} \makeatother
\setlist{font=\normalfont}
\DeclareMathAlphabet{\mathcal}{OMS}{cmsy}{m}{n} %
\newcommand{\aut}[1]{\mathrm{Aut}\,{(#1)}}
\newcommand{\gyr}[2]{\mathrm{gyr}\,{[{#1}]{#2}}}
\newcommand{\set}[1]{\{#1\}}
\newcommand{\cset}[2]{\set{{#1}\colon{#2}}}
\newcommand{\abs}[1]{|#1|}
\newcommand{\sym}[1]{\mathrm{Sym}\,{(#1)}}
\newcommand{\id}[1]{\mathrm{id}_{#1}}
\newcommand{\orb}[1]{\mathrm{orb}\,{#1}}
\newcommand{\stab}[1]{\mathrm{stab}\,{#1}}
\newcommand{\lcap}[2]{\displaystyle\bigcap_{#1}^{#2}}
\newcommand{\Fix}[1]{\mathrm{Fix}\,{(#1)}}
\newcommand{\fix}[1]{\mathrm{fix}\,{#1}}
\newcommand{\lsum}[2]{\displaystyle\sum_{#1}^{#2}}
\newcommand{\Abs}[1]{\Big|#1\Big|}
\newcommand{\lcup}[2]{\displaystyle\bigcup_{#1}^{#2}}
\newcommand{\Bp}[1]{\left(#1\right)}
\newcommand{\conj}[2]{\mathrm{conj}_{#1}\,{(#2)}}
\newcommand{\tranc}[1]{\mathcal{TC}\,{(#1)}}
\newcommand{\nulll}{\hskip4pt}
\newcommand{\symz}[1]{\mathrm{Sym}_0\,{(#1)}}
\newcommand{\B}{\mathbb{B}}
\newcommand{\R}{\mathbb{R}}
\newcommand{\norm}[1]{\|#1\|}
\newcommand{\gen}[1]{\langle#1\rangle}
\newcommand{\Bb}[1]{\left\{#1\right\}}
\newcommand{\vphi}{\varphi}
\newcommand{\sig}{\sigma}
\newcommand{\Gam}{\Gamma}
\newcommand{\bet}{\beta}
\newcommand{\alp}{\alpha}
\newcommand{\lamb}{\lambda}
\newcommand{\gam}{\gamma}
\renewcommand{\iff}{\Leftrightarrow}
\renewcommand{\vec}[1]{\mathbf{#1}}
\begin{document}
\title{\textbf{Gyrogroup actions:\\ A generalization of group actions}$^{\ast,\dag}$\footnote{$^\ast$This is the final version of the
manuscript appeared in J. Algebra {\bf 454}~(2016), 70--91. The
published version of the article is accessible via
doi:\href{http://dx.doi.org/10.1016/j.jalgebra.2015.12.033}{{10.1016/j.jalgebra.2015.12.033}}.
}\footnote{$^\dag$Part of this work has been presented at the Joint
Mathematics Meetings (JMM 2016), Seattle, WA, USA, January 6--9,
2016.}}
\author{Teerapong Suksumran$^{\ddag}$\footnote{$^\ddag$The author was financially supported by Institute for Promotion of Teaching Science and
Technology (IPST), \mbox{Thailand}, through Development and
Promotion of Science and Technology Talents Project
(DPST).}\footnote{\copyright\,2016 Author. This manuscript version
is made available under the CC-BY-NC-ND 4.0 license
(\href{http://creativecommons.org/licenses/by-nc-nd/4.0/}{{http://creativecommons.org/licenses/by-nc-nd/4.0/}}).}\\[5pt]
Department of Mathematics\\
North Dakota State University\\
Fargo, ND 58105, USA\\[5pt]
\texttt{teerapong.suksumran@gmail.com}}
\date{}
\maketitle

\begin{abstract}
This article explores the novel notion of gyrogroup actions, which
is a natural generalization of the usual notion of group actions. As
a first step toward the study of gyrogroup actions from the
algebraic viewpoint, we prove three well-known theorems in group
theory for gyrogroups: the orbit-stabilizer theorem, the orbit
decomposition theorem, and the Burnside lemma (or the
Cauchy-Frobenius lemma). We then prove that under a certain
condition, a gyrogroup $G$ acts transitively on the set $G/H$ of
left cosets of a subgyrogroup $H$ in $G$ in a natural way. From this
we prove the structure theorem that every transitive action of a
gyrogroup can be \mbox{realized} as a gyrogroup action by left
gyroaddition. We also exhibit concrete \mbox{examples} of gyrogroup
actions from the M\"{o}bius and Einstein gyrogroups.
\end{abstract}
\textbf{Keywords.} permutation representation, gyrogroup action,
orbit-stabilizer theorem, Burnside lemma, gyrogroup, left
gyroaddition.\\[3pt]
\textbf{2010 MSC.} Primary 20C99; Secondary 05A05, 05A18, 20B30,
20N05.
\thispagestyle{empty}

\section{Introduction}\label{sec: introdcution}

The method of group action is considered as an important and a
powerful tool in mathematics. It is used to unravel many of
mathematical structures. In fact, when one structure acts on another
structure, a better understanding is obtained on both. The Sylow
theorems, for instance, result from the action of a group on itself
by conjugation. The algebraic structure of a finite group in turn is
revealed by the use of the Sylow theorems. Further, many
wide-application theorems in group theory and combinatorics such as
the orbit-stabilizer theorem, the class equation, and the Burnside
lemma (also called the Cauchy-Frobenius lemma) are byproducts of
group actions. In addition, group actions are used to prove versions
of the Lagrange theorem, the Sylow theorems, and the Hall theorems
for Bruck loops in \cite{BBAS2011FBL}.

There are some attempts to study permutation representations of
quasigroups and loops---algebraic structures that are a
generalization of groups. For instance, Jonathan Smith has
intensively studied quasigroup and loop representations in a series
of papers \cite{JS2003PRL, JS2006PRL, JS2007FLQ}. Also, the study of
sharply transitive sets in quasigroup actions can be found in
\cite{BIJRJS2011STS}.

Gyrogroups are a suitable generalization of groups, arising from the
study of the parametrization of the Lorentz transformation group by
Abraham Ungar \cite{AU1988TRP}. The origin of a gyrogroup is
described in \cite{AU2008AHG} and references therein. Gyrogroups
share \mbox{remarkable} analogies with groups. In fact, every group
forms a gyrogroup under the same operation. Many of classical
theorems in group theory continue to hold for gyrogroups, including
the Lagrange theorem \cite{TSKW2014LTG}, the fundamental isomorphism
\mbox{theorems} \cite{TSKW2015ITG}, and the Cayley theorem
\cite{TSKW2015ITG}. Further, harmonic analysis can be studied in the
framework of gyrogroups \cite{MF2014HAE, MF2015HAM}, using the
gyroassociative law in place of the associative law.

In this article, we continue to prove some well-known theorems in
group theory for gyrogroups, including the orbit-stabilizer theorem,
the Burnside lemma, and the orbit decomposition theorem. These
theorems are proved by techniques similar to those used in group
theory, where gyroautomorphisms play the fundamental role and the
associative law is replaced by the gyroassociative law. We recover
the classical results if a gyrogroup is {\it degenerate} in the
sense that its gyroautomorphisms are the identity automorphism. It
is worth pointing out that the results involving gyrogroups may be
recast in the framework of left Bol loops with the property that
left inner mapping are automorphisms.

The structure of the article is organized as follows. In Section
\ref{sec: gyrogroup}, we review the basic theory of gyrogroups. In
Section \ref{sec: gyrogroup action}, we study gyrogroup actions or,
equivalently, permutation representations of a gyrogroup. In Section
\ref{sec: left gyroaddition}, we prove that under a \mbox{certain}
condition, a gyrogroup $G$ acts transitively on the coset space
$G/H$ of left cosets of a subgyrogroup $H$ in $G$ by left
gyroaddition. This results in the fundamental iso-morphism theorem
for gyrogroup actions. In Section \ref{sec: example}, we exhibit
concrete examples of transitive gyrogroup actions from the
M\"{o}bius and Einstein gyrogroups.

\section{Gyrogroups}\label{sec: gyrogroup}
\par We summarize basic properties of gyrogroups for reference. Much of
this section can be found in \cite{AU2008AHG, TSKW2014LTG,
TSKW2015ITG}. The reader familiar with gyrogroup theory may skip
this section.

\par A pair $(G,\oplus)$ consisting of a nonempty set $G$ and a binary
operation $\oplus$ on $G$ is called a \textit{groupoid}. The group
of automorphisms of a groupoid $(G, \oplus)$ is denoted by
$\aut{G,\oplus}$. Ungar \cite{AU2008AHG} formulates the formal
definition of a gyrogroup as follows.
\begin{definition}[Gyrogroups]\label{def: gyrogroup}
A groupoid $(G,\oplus)$ is a \textit{gyrogroup} if its binary
operation satisfies the following axioms.
\begin{enumerate}
    \item[(G1)] There is an element $0\in G$ such that $0\oplus a =
    a$ for all $a\in G$.
    \item[(G2)] For each $a\in G$, there is an element $b\in G$ such that
$b\oplus a = 0$.
    \item[(G3)] For all $a$, $b\in G$, there is an automorphism
$\gyr{a,b}{}\in\aut{G,\oplus}$ such that
    \begin{equation}\tag{left gyroassociative law} a\oplus (b\oplus c) = (a\oplus b)\oplus\gyr{a,
    b}{c}\end{equation}
    for all $c\in G$.
    \item[(G4)] For all $a$, $b\in G$, $\gyr{a\oplus b, b}{} = \gyr{a,
    b}{}$.\hfill(left loop property)
\end{enumerate}
\end{definition}

\par We remark that the axioms in Definition \ref{def: gyrogroup} imply
the right counterparts. In particular, any gyrogroup has a unique
two-sided identity $0$, and an element $a$ of the gyro-group has a
unique two-sided inverse $\ominus a$. Let $G$ be a gyrogroup. For
$a, b\in G$, the map $\gyr{a, b}{}$ is called the
\textit{gyroautomorphism generated by $a$ and $b$}. By Theorem 2.10
of \cite{AU2008AHG}, the gyroautomorphisms are completely determined
by the \textit{gyrator identity}
\begin{equation}\label{eqn: gyrator identity}
\gyr{a, b}{c} = \ominus(a\oplus b)\oplus(a\oplus (b\oplus c))
\end{equation}
for all $a$, $b$, $c\in G$. Note that every group forms a gyrogroup
under the same operation by defining the gyroautomorphisms to be the
identity automorphism, but the converse is not in general true. From
this point of view, gyrogroups suitably generalize groups.

\par Recall that the \textit{coaddition}, $\boxplus$, of a gyrogroup
$G$ is defined by the equation
\begin{equation}\label{eqn: gyrogroup coaddition}
a\boxplus b = a\oplus\gyr{a, \ominus b}{b}
\end{equation}
for $a, b\in G$. Set $a\boxminus b = a\boxplus (\ominus b)$ for $a,
b\in G$. By Theorem 2.22 of \cite{AU2008AHG}, the following
cancellation laws hold in gyrogroups.
\begin{theorem}[{\hskip-0.2pt}\cite{AU2008AHG}]\label{thm: cancellation law in gyrogroup}
Let $G$ be a gyrogroup. For all $a$, $b$, $c\in G$,
\begin{enumerate}
    \item $a\oplus b = a\oplus c$ implies $b = c$;\hfill\normalfont{(general left cancellation
    law)}
    \item $\ominus a\oplus(a\oplus b) = b$;\hfill\normalfont{(left cancellation law)}
    \item $(b\ominus a)\boxplus a = b$;\hfill\normalfont{(right cancellation law I)}
    \item $(b\boxminus a)\oplus a = b$. \hfill\normalfont{(right cancellation law II)}
\end{enumerate}
\end{theorem}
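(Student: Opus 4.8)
The plan is to prove the four laws in the order (1), (2), (3), (4) directly from the axioms (G1)--(G4), using also the facts already in force: that $0$ is a two-sided identity and $\ominus a$ a two-sided inverse (so $\ominus a\oplus a=a\oplus\ominus a=0$ and $\ominus\ominus a=a$), the definition~\eqref{eqn: gyrogroup coaddition} of $\boxplus$, the right counterparts of (G1)--(G4) (in particular the right gyroassociative law $(x\oplus y)\oplus z=x\oplus(y\oplus\gyr{y,x}{z})$), and the fact that every gyroautomorphism $\gyr{x,y}{}$, being an automorphism of $(G,\oplus)$, is a bijection that fixes $0$ and commutes with $\ominus$. With this, (1) is immediate, and it uses \emph{nothing} about gyroautomorphisms beyond bijectivity: given $a\oplus b=a\oplus c$, pick $d$ with $d\oplus a=0$ by (G2), left-add $d$ to both sides, apply (G3) twice to get $(d\oplus a)\oplus\gyr{d,a}{b}=(d\oplus a)\oplus\gyr{d,a}{c}$, that is, $\gyr{d,a}{b}=\gyr{d,a}{c}$ since $d\oplus a=0$ is a left identity, and then cancel the bijection $\gyr{d,a}{}$.

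For (2) the key sublemma is $\gyr{0,a}{}=\id{G}$ for all $a$: the left gyroassociative law with first entry $0$ reads $a\oplus c=a\oplus\gyr{0,a}{c}$, so part (1) forces $\gyr{0,a}{c}=c$. The left loop property (G4), applied to the pair $(\ominus a,a)$ and using $\ominus a\oplus a=0$, then yields $\gyr{\ominus a,a}{}=\gyr{0,a}{}=\id{G}$, whence $\ominus a\oplus(a\oplus b)=(\ominus a\oplus a)\oplus\gyr{\ominus a,a}{b}=0\oplus b=b$. For (3), expand the outer $\boxplus$ by~\eqref{eqn: gyrogroup coaddition}: $(b\ominus a)\boxplus a=(b\ominus a)\oplus\gyr{b\ominus a,\,\ominus a}{a}$; the left loop property applied to $(b,\ominus a)$ collapses the gyration to $\gyr{b,\ominus a}{}$, so the right-hand side is $(b\oplus\ominus a)\oplus\gyr{b,\ominus a}{a}$, which by the left gyroassociative law read backwards equals $b\oplus(\ominus a\oplus a)=b\oplus 0=b$.

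For (4), expanding by~\eqref{eqn: gyrogroup coaddition} and using $\ominus\ominus a=a$ gives $b\boxminus a=b\boxplus(\ominus a)=b\oplus\gyr{b,a}{\ominus a}$, so $(b\boxminus a)\oplus a=(b\oplus\gyr{b,a}{\ominus a})\oplus a$. The right gyroassociative law rewrites this as $b\oplus\bigl(\gyr{b,a}{\ominus a}\oplus\gyr{\gyr{b,a}{\ominus a},\,b}{a}\bigr)$, and since $\ominus\gyr{b,a}{\ominus a}=\gyr{b,a}{a}$, by part (1) it suffices to show $\gyr{\gyr{b,a}{\ominus a},\,b}{a}=\gyr{b,a}{a}$; this identity follows by conjugating the nested gyration through the automorphism $\gyr{b,a}{}$ and then invoking a loop property. (Substituting $\ominus a$ for $a$ in (3) also gives the companion identity $(b\oplus a)\boxminus a=b$, so that (3), (4) together say $x\mapsto x\oplus a$ is a bijection of $G$ with inverse $x\mapsto x\boxminus a$.)

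I expect the only delicate points to be the bootstrapping in (2) --- part (1) must be established first, and it is essential that (1) requires only bijectivity of a gyroautomorphism, not triviality of $\gyr{0,a}{}$ --- and, in (4), the simplification of the nested gyroautomorphism $\gyr{\gyr{b,a}{\ominus a},\,b}{}$, which is the one place the gyroassociative calculus must be carried through with care.
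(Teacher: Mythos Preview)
The paper does not actually prove this theorem: it merely cites Theorem~2.22 of \cite{AU2008AHG}. So there is no ``paper's own proof'' to compare your approach against; I will simply assess the argument.

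Your proofs of (1), (2), (3) are correct and standard. The bootstrapping issue you flag for (2) --- that (1) must come first and uses only bijectivity of $\gyr{d,a}{}$ --- is handled properly.

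Part (4), however, has a real gap. You correctly reduce the claim to the identity
\[
\gyr{\gyr{b,a}{\ominus a},\,b}{a}=\gyr{b,a}{a},
\]
but the one-line justification (``conjugating the nested gyration through $\gyr{b,a}{}$ and then invoking a loop property'') is not a proof, and you yourself flag it as the delicate step without carrying it out. Applying the conjugation identity $\gyr{\phi x,\phi y}{}=\phi\circ\gyr{x,y}{}\circ\phi^{-1}$ with $\phi=\gyr{b,a}{}$ yields
\[
\gyr{\gyr{b,a}{\ominus a},\,b}{}=\gyr{b,a}{}\circ\gyr{\ominus a,\,\gyr{a,b}{b}}{}\circ\gyr{a,b}{},
\]
and the inner factor does not collapse by a single loop property. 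One route that does work: from $b\oplus(a\ominus a)=b$ and (G3) one finds $(b\oplus a)\oplus\gyr{b,a}{(\ominus a)}=b$, so $y:=\gyr{b,a}{(\ominus a)}=\ominus(b\oplus a)\oplus b$ by your part~(2). Then the left loop property gives $\gyr{y,b}{}=\gyr{\ominus(b\oplus a),b}{}$, and a further chain using the \emph{even property} $\gyr{\ominus u,\ominus v}{}=\gyr{u,v}{}$ together with the right and left loop properties yields
\[
\gyr{\ominus(b\oplus a),\,b}{}=\gyr{b\oplus a,\,\ominus b}{}=\gyr{b\oplus a,\,a}{}=\gyr{b,a}{}.
\]
That is several nontrivial identities, not one, and the even property itself must be derived from (G1)--(G4). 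Until you supply this chain or an alternative, (4) is incomplete. Note also that your closing parenthetical --- that (3) with $a\mapsto\ominus a$ gives $(b\oplus a)\boxminus a=b$ --- shows only that $x\mapsto x\boxminus a$ is a \emph{left} inverse of $x\mapsto x\oplus a$; item~(4) is precisely the statement that it is also a \emph{right} inverse, which does not follow formally.
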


\par Subgyrogroups, gyrogroup homomorphisms, and quotient gyrogroups
are studied in \cite{TSKW2014LTG, TSKW2015ITG}. An attempt to prove
a gyrogroup version of the Lagrange theorem leads to the notion of
L-subgyrogroups \cite{TSKW2015ITG}. A subgyrogroup $H$ of a
gyrogroup $G$ is called an \textit{L-subgyrogroup} if $\gyr{a,
h}{(H)} = H$ for all $a\in G$ and $h\in H$. L-subgyrogroups behave
well in the sense that they partition $G$ into left cosets of equal
size. More precisely, if $H$ is an L-subgyrogroup of $G$, then $H$
and $a\oplus H := \cset{a\oplus h}{h\in H}$ have the same
cardinality and the coset space $G/H := \cset{a\oplus H}{a\in G}$
forms a \textit{disjoint} partition of $G$. In the case where $G$ is
a finite gyrogroup, we obtain the familiar index formula
\begin{equation}\label{eqn: index formula}
\abs{G} = [G\colon H]\abs{H}.
\end{equation}
Here, $[G\colon H]$ denotes the \textit{index} of $H$ in $G$, which
is defined as the cardinality of the coset space $G/H$. We will see
in Section \ref{sec: left gyroaddition} that certain L-subgyrogroups
give rise to gyrogroup actions by left gyroaddition.

\par For any \textit{non}-L-subgyrogroup $K$ of a gyrogroup $G$, it is no
longer true that the left cosets of $K$ partition $G$. Moreover, the
index formula $\abs{G} = [G\colon K]\abs{K}$ is not in general true.
Nevertheless, the order of any subgyrogroup of a finite gyrogroup
$G$ divides the order of $G$, see \cite[Theorem 5.7]{TSKW2014LTG}.

\section{Gyrogroup actions}\label{sec: gyrogroup action}
\par Throughout the remainder of the article, $X$ is a (finite or
infinite) nonempty set and $G$ is a (finite or infinite) gyrogroup
unless otherwise stated.

\subsection{Definition}

\begin{definition}[Gyrogroup actions]\label{def: action}
A map from $G\times X$ to $X$, written $(a, x)\mapsto a\cdot x$, is
a (gyrogroup) \textit{action} of $G$ on $X$ if the following
conditions hold:
\begin{enumerate}
    \item $0\cdot x = x$ for all $x\in X$, and
    \item $a\cdot(b\cdot x) = (a\oplus b)\cdot x$ for all $a$, $b\in
    G$, $x\in X$.
\end{enumerate}
In this case, $X$ is said to be a \textit{$G$-set} and $G$ is said
to act on $X$.
\end{definition}

\par Note that if $G$ is a gyrogroup with trivial gyroautomorphisms, then
the notion of gyrogroup actions specializes to the usual notion of
group actions. In other words, gyrogroup actions naturally
generalize group actions. We will present some concrete examples of
gyrogroups that satisfy the axioms of a gyrogroup action in Section
\ref{sec: example}.

\par Let $\sym{X}$ denote the group of permutations of $X$. Since
every group is a gyro-group under the same operation, $\sym{X}$,
together with function composition $\circ$, forms a gyrogroup with
trivial gyroautomorphisms. Thus, it makes sense to speak of a
gyro-group homomorphism from $G$ to $\sym{X}$. Recall that a map
$\vphi\colon G\to\sym{X}$ is a \textit{gyrogroup homomorphism} if
\begin{equation}
\vphi(a\oplus b) = \vphi(a)\circ\vphi(b)
\end{equation}
for all $a, b\in G$.

\par The following theorem asserts that any gyrogroup action of $G$ on
$X$ induces a gyro-group homomorphism from $G$ to $\sym{X}$. Let $X$
be a $G$-set. For each $a\in G$, define $\sig_a$ by
\begin{equation}\label{eqn: Permutation of X induced by action}
\sig_a(x) = a\cdot x,\qquad x\in X.
\end{equation}
Define $\dot\vphi$ by the equation
\begin{equation}\label{eqn: Action as a homomorphism}
\dot\vphi(a) = \sig_a,\qquad a\in G.
\end{equation}

\newpage

\begin{theorem}\label{thm: Action as a homomorphism}
Let $X$ be a $G$-set.
\begin{enumerate}
    \item\label{item: Sigma_a is a permutation} For each $a\in G$, $\sig_a$ defined by \eqref{eqn: Permutation of X
induced by action} is a permutation of $X$.
    \item\label{item: Varphidot is a homomorphism} The map $\dot\vphi$ defined by \eqref{eqn: Action as a
    homomorphism} is a gyrogroup homomorphism from $G$ to $\sym{X}$. Its kernel is
    $\ker{\dot\vphi} = \cset{a\in G}{a\cdot x = x \,\textrm{ for all }x\in X}$.
\end{enumerate}
\end{theorem}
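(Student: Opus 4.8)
The plan is to prove the two parts in the order stated, using only the gyrogroup axioms and the two defining conditions of a gyrogroup action.

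For part (\ref{item: Sigma_a is a permutation}), I would show that $\sig_a$ is a bijection by exhibiting an explicit two-sided inverse, namely $\sig_{\ominus a}$. First I would verify $\sig_{\ominus a}\circ\sig_a = \id{X}$: for $x\in X$, $\sig_{\ominus a}(\sig_a(x)) = \ominus a\cdot(a\cdot x) = (\ominus a\oplus a)\cdot x = 0\cdot x = x$, using condition (2) of Definition \ref{def: action}, then the fact that $\ominus a$ is the two-sided inverse of $a$, then condition (1). For the reverse composition $\sig_a\circ\sig_{\ominus a} = \id{X}$, the same computation applies with $a$ replaced by $\ominus a$, since $\ominus(\ominus a) = a$ in any gyrogroup. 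Hence $\sig_a$ is invertible, so it is a permutation of $X$.

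For part (\ref{item: Varphidot is a homomorphism}), I would first confirm that $\dot\vphi$ is well-defined, i.e. that $\sig_a\in\sym{X}$, which is exactly part (\ref{item: Sigma_a is a permutation}). Then, to check the homomorphism property $\dot\vphi(a\oplus b) = \dot\vphi(a)\circ\dot\vphi(b)$, I would evaluate both sides at an arbitrary $x\in X$: the left side gives $\sig_{a\oplus b}(x) = (a\oplus b)\cdot x$, and the right side gives $(\sig_a\circ\sig_b)(x) = a\cdot(b\cdot x)$, and these agree by condition (2) of Definition \ref{def: action}. Since this holds for all $x$, we get equality of permutations, so $\dot\vphi$ is a gyrogroup homomorphism. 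Finally, for the kernel, I would recall that the identity of the gyrogroup $\sym{X}$ is $\id{X}$, so $a\in\ker{\dot\vphi}$ iff $\sig_a = \id{X}$ iff $a\cdot x = x$ for all $x\in X$, which is the asserted description.

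I do not anticipate any serious obstacle here; the statement is a direct gyrogroup analogue of the classical fact that a group action is the same as a permutation representation, and the proof is essentially a bookkeeping exercise in the action axioms. The one point that requires a (trivial) remark is that $\sym{X}$ is being regarded as a \emph{gyrogroup} with trivial gyroautomorphisms, so that "gyrogroup homomorphism into $\sym{X}$" makes sense and reduces to the ordinary condition $\vphi(a\oplus b) = \vphi(a)\circ\vphi(b)$; this has already been set up in the paragraph preceding the theorem, so nothing further is needed.
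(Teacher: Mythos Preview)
Your proposal is correct and follows essentially the same approach as the paper: the paper simply states that the theorem follows directly from the axioms of a gyrogroup action and then gives the same chain $a\in\ker{\dot\vphi}\Leftrightarrow\sig_a=\id{X}\Leftrightarrow a\cdot x=x$ for all $x\in X$. Your argument is just a fully expanded version of this, including the explicit inverse $\sig_{\ominus a}$ for part (\ref{item: Sigma_a is a permutation}).
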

\begin{proof}
The theorem follows directly from the axioms of a gyrogroup action.
Note that $a\in\ker{\dot\vphi}$ if and only if $\sig_a = \id{X}$ if
and only if $a\cdot x = x$ for all $x\in X$.
\end{proof}

\par According to Theorem \ref{thm: Action as a homomorphism},
$\dot\vphi$ is called the \textit{gyrogroup homomorphism
\mbox{afforded} by a gyrogroup action of $G$ on $X$} or the
\textit{associated permutation representation of $G$}. The process
of turning a gyrogroup action into a permutation representation is
reversible in the sense of the following theorem.
\begin{theorem}\label{thm: action from permutation representation}
Let $\vphi\colon G\to\sym{X}$ be a gyrogroup homomorphism. The map
$\cdot$ defined by the equation
\begin{equation}
a\cdot x = \vphi(a)(x),\qquad a\in G,\, x\in X,
\end{equation}
is a gyrogroup action of $G$ on $X$. Furthermore, $\dot\vphi =
\vphi$.
\end{theorem}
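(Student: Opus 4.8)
The plan is to verify the two defining conditions of a gyrogroup action for the map $\cdot$ directly from the homomorphism property of $\vphi$, and then to read off the identity $\dot\vphi = \vphi$ from the definitions. The one mild preliminary I would record first is that a gyrogroup homomorphism sends the identity to the identity, that is, $\vphi(0) = \id{X}$. This follows exactly as in group theory: from $\vphi(0) = \vphi(0\oplus 0) = \vphi(0)\circ\vphi(0)$ and the fact that $\vphi(0)\in\sym{X}$ is invertible, composing on the left with $\vphi(0)^{-1}$ yields $\vphi(0) = \id{X}$.

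Granting this, the first axiom of Definition \ref{def: action} is immediate: for every $x\in X$ we have $0\cdot x = \vphi(0)(x) = \id{X}(x) = x$. For the second axiom, I would fix $a, b\in G$ and $x\in X$ and compute
\[
a\cdot(b\cdot x) = \vphi(a)\bigl(\vphi(b)(x)\bigr) = \bigl(\vphi(a)\circ\vphi(b)\bigr)(x) = \vphi(a\oplus b)(x) = (a\oplus b)\cdot x,
\]
where the crucial third equality is precisely the statement that $\vphi$ is a gyrogroup homomorphism. Hence $\cdot$ is a gyrogroup action of $G$ on $X$, so $X$ is a $G$-set and the constructions \eqref{eqn: Permutation of X induced by action} and \eqref{eqn: Action as a homomorphism} of Section \ref{sec: gyrogroup action} apply to it.

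To finish, I would unwind the definition of $\dot\vphi$ for this action: by \eqref{eqn: Action as a homomorphism} and \eqref{eqn: Permutation of X induced by action} we have $\dot\vphi(a) = \sig_a$, where $\sig_a(x) = a\cdot x = \vphi(a)(x)$ for all $x\in X$; thus $\sig_a = \vphi(a)$ as elements of $\sym{X}$, and therefore $\dot\vphi(a) = \vphi(a)$ for every $a\in G$, i.e.\ $\dot\vphi = \vphi$. I do not anticipate any genuine obstacle here, since the statement is simply the converse to Theorem \ref{thm: Action as a homomorphism} and each step is a one-line unwinding of definitions; the only point that warrants being spelled out explicitly (and is easy to overlook) is the verification $\vphi(0) = \id{X}$, without which the first action axiom is not automatic.
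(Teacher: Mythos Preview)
Your proof is correct and follows essentially the same route as the paper's: verify the two action axioms directly from the homomorphism property, then unwind the definition of $\dot\vphi$ to obtain $\dot\vphi = \vphi$. The only difference is that you spell out the justification of $\vphi(0) = \id{X}$, which the paper simply asserts with ``Clearly''; this is a helpful addition but not a substantive change in approach.
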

\begin{proof}
Clearly, $0\cdot x = \vphi(0)(x) = \id{X}(x) = x$ for all $x\in X$.
For all $a$, $b\in G$, $x\in X$, we have $a\cdot(b\cdot x) =
(\vphi(a)\circ\vphi(b))(x) = \vphi(a\oplus b)(x) = (a\oplus b)\cdot
x$. By construction, $\dot\vphi(a)(x) = a\cdot x = \vphi(a)(x)$ for
all $x\in X$, $a\in G$. Hence, $\dot\vphi = \vphi$.
\end{proof}

\par Theorems \ref{thm: Action as a homomorphism} and \ref{thm: action
from permutation representation} together imply that the study of
gyrogroup actions of $G$ on $X$ is equivalent to the study of
gyrogroup homomorphisms from $G$ to $\sym{X}$.

\subsection{Orbits and stabilizers}
\par In this section, we prove gyrogroup versions of three
well-known theorems in group theory and combinatorics:
\begin{itemize}\renewcommand{\labelitemi}{$\bullet$}
    \item the orbit-stabilizer theorem;
    \item the orbit decomposition theorem;
    \item the Burnside lemma.
\end{itemize}

\par We will see shortly that {\it stabilizer subgyrogroups} of $G$ have
nice properties. For instance, they share remarkable properties with
stabilizer subgroups; are L-subgyrogroups and hence partition $G$
into left cosets; and are invariant under the gyroautomorphisms of
$G$. Among other things, they lead to the orbit-stabilizer theorem
for gyrogroups.

\par Let $X$ be a $G$-set. Define a relation $\sim$ on $X$ by the
condition
\begin{equation}\label{eqn: Equivalence relation from action}
x\sim y \quad\Leftrightarrow\quad y = a\cdot x \textrm{ for some
}a\in G.
\end{equation}

\begin{theorem}\label{thm: Equivalence relation from action}
The relation $\sim$ defined by \eqref{eqn: Equivalence relation from
action} is an equivalence relation on $X$.
\end{theorem}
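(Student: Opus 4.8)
The plan is to verify the three defining properties of an equivalence relation directly from the two axioms of a gyrogroup action, together with the elementary facts about identities and inverses recorded after Definition \ref{def: gyrogroup}. None of the three verifications should be hard; the only point that requires a moment's care is symmetry, where I must produce an explicit group-like element sending $y$ back to $x$.

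First I would check reflexivity. For any $x\in X$, axiom (1) of a gyrogroup action gives $0\cdot x = x$, and since $0\in G$ this shows $x\sim x$. Next, for transitivity, suppose $x\sim y$ and $y\sim z$. Then there are $a,b\in G$ with $y = a\cdot x$ and $z = b\cdot y$. Using axiom (2) of a gyrogroup action,
\[
z = b\cdot y = b\cdot(a\cdot x) = (b\oplus a)\cdot x,
\]
and since $b\oplus a\in G$ we conclude $x\sim z$.

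For symmetry, suppose $x\sim y$, so $y = a\cdot x$ for some $a\in G$. The natural candidate for the element returning $y$ to $x$ is $\ominus a$. Recall from the remark following Definition \ref{def: gyrogroup} that $\ominus a$ is the two-sided inverse of $a$, so in particular $\ominus a\oplus a = 0$. Hence, by axiom (2) and then axiom (1) of a gyrogroup action,
\[
(\ominus a)\cdot y = (\ominus a)\cdot(a\cdot x) = (\ominus a\oplus a)\cdot x = 0\cdot x = x,
\]
which shows $y\sim x$. Thus $\sim$ is reflexive, symmetric, and transitive, hence an equivalence relation on $X$. The equivalence class of $x$ under $\sim$ is precisely the set $\cset{a\cdot x}{a\in G}$, which will be called the orbit of $x$ in the sequel; I would mention this here so that the orbit decomposition theorem can be stated as an immediate corollary.
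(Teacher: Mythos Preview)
Your proof is correct and follows essentially the same approach as the paper: both verify reflexivity via $0\cdot x=x$, symmetry via acting by $\ominus a$, and transitivity via $(b\oplus a)\cdot x$. The only differences are cosmetic---you spell out the intermediate steps $(\ominus a\oplus a)\cdot x=0\cdot x=x$ in the symmetry argument and add a remark about orbits at the end, whereas the paper is slightly more terse and treats symmetry before transitivity.
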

\begin{proof}
Let $x$, $y$, $z\in X$.

\textit{Reflexive.} Since $0\cdot x = x$, we have $x\sim x$.

\textit{Symmetric.} Suppose that $x\sim y$. Then $y = a\cdot x$ for
some $a\in G$. Since $(\ominus a)\cdot y = x$, we have $y\sim x$.

\textit{Transitive.} Suppose that $x\sim y$ and $y\sim z$. Then $y =
a\cdot x$ and $z = b\cdot y$ for some \mbox{$a$, $b\in G$}. Since $z
= b\cdot (a\cdot x) = (b\oplus a)\cdot x$, we have $x\sim z$.
\end{proof}

\par Let $x\in X$. The equivalence class of $x$ determined by the
relation $\sim$ is called the \textit{orbit of $x$} and is denoted
by $\orb{x}$, that is, $\orb{x} = \cset{y\in X}{y\sim x}$. It is
straightforward to check that
\begin{equation}\label{eqn: orbit}
\orb{x} = \cset{a\cdot x}{a\in G}
\end{equation}
for all $x\in X$. The \textit{stabilizer of $x$ in $G$}, denoted by
$\stab{x}$, is defined as
\begin{equation}\label{eqn: stabilizer subgyrogroup}
\stab{x} = \cset{a\in G}{a\cdot x = x}.
\end{equation}

\begin{proposition}\label{prop: Stabilizer subgyrogroup}
Let $X$ be a $G$-set.
\begin{enumerate}
    \item\label{item: stab x is a subgyrogroup} For each $x\in X$, $\stab{x}$ is a subgyrogroup of $G$.
    \item\label{item: Kerner of varphidot as intersection} $\ker{\dot\vphi} = \lcap{x\in
    X}{}\stab{x}$.
\end{enumerate}
\end{proposition}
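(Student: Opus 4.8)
My plan is to verify the two parts separately, with the first part being the substantive one. For part \eqref{item: stab x is a subgyrogroup}, I would appeal to whatever subgyrogroup criterion is available from the cited references on subgyrogroups: a nonempty subset $H$ of $G$ is a subgyrogroup if it is closed under $\oplus$, closed under inversion, and closed under the restricted gyroautomorphisms $\gyr{a,b}{}$ for $a,b\in H$ (equivalently, one checks $H$ is a gyrogroup in its own right under the inherited operation). So, fixing $x\in X$, I would first note $0\in\stab{x}$ since $0\cdot x = x$, so $\stab{x}$ is nonempty. For closure under $\oplus$: if $a,b\in\stab{x}$, then $(a\oplus b)\cdot x = a\cdot(b\cdot x) = a\cdot x = x$ by axiom (2) of a gyrogroup action, so $a\oplus b\in\stab{x}$. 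For closure under inversion: if $a\in\stab{x}$, then $(\ominus a)\cdot x = (\ominus a)\cdot(a\cdot x) = (\ominus a\oplus a)\cdot x = 0\cdot x = x$, using the left cancellation law (Theorem \ref{thm: cancellation law in gyrogroup}(2)) at the level of the action, so $\ominus a\in\stab{x}$.

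The part that needs a little more care is closure under the gyroautomorphisms. Here I would use the gyrator identity \eqref{eqn: gyrator identity}: for $a,b\in\stab{x}$ and any $c\in\stab{x}$,
\[
\gyr{a,b}{c}\cdot x = \bigl(\ominus(a\oplus b)\oplus(a\oplus(b\oplus c))\bigr)\cdot x.
\]
Applying axiom (2) of the action repeatedly, the right-hand side equals $(\ominus(a\oplus b))\cdot\bigl(a\cdot(b\cdot(c\cdot x))\bigr)$. Since $c\cdot x = x$, then $b\cdot x = x$, then $a\cdot x = x$, this collapses to $(\ominus(a\oplus b))\cdot x$, which is $x$ because $a\oplus b\in\stab{x}$ (shown above) and $\stab{x}$ is already closed under inversion. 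Hence $\gyr{a,b}{c}\in\stab{x}$, completing the verification that $\stab{x}$ is a subgyrogroup. I expect this gyroautomorphism-closure step to be the main obstacle only in the bookkeeping sense — one must be careful to expand the gyrator identity and peel off the actions in the right order — but there is no genuine difficulty.

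For part \eqref{item: Kerner of varphidot as intersection}, I would simply chase definitions. By the description of the kernel in Theorem \ref{thm: Action as a homomorphism}\eqref{item: Varphidot is a homomorphism}, we have $a\in\ker{\dot\vphi}$ if and only if $a\cdot x = x$ for every $x\in X$, which is precisely the statement that $a\in\stab{x}$ for every $x\in X$, i.e. $a\in\bigcap_{x\in X}\stab{x}$. Thus $\ker{\dot\vphi} = \lcap{x\in X}{}\stab{x}$, and as a bonus this exhibits the kernel as an intersection of subgyrogroups (consistent with its being a subgyrogroup itself). This part is immediate and requires no calculation beyond unwinding \eqref{eqn: stabilizer subgyrogroup}.
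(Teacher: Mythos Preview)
Your proof is correct and follows essentially the same approach as the paper: show $0\in\stab{x}$, closure under $\oplus$, closure under inversion, then invoke the subgyrogroup criterion; part \eqref{item: Kerner of varphidot as intersection} is the same definition-chase via Theorem \ref{thm: Action as a homomorphism}\eqref{item: Varphidot is a homomorphism}. The only difference is that you explicitly verify closure under $\gyr{a,b}{}$ for $a,b\in\stab{x}$, whereas the paper's cited criterion (\cite[Proposition 14]{TSKW2015ITG}) requires only nonemptiness, closure under $\oplus$, and closure under inversion---gyroautomorphism closure then follows automatically from the gyrator identity, so your extra step is redundant here (though a version of that computation reappears as Proposition \ref{prop: Stabilizer invariant under gyroautomorphism}, where it is proved for \emph{all} $a,b\in G$).
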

\begin{proof}
Clearly, $0\in \stab{x}$. Let $a$, $b\in \stab{x}$. Since $(a\oplus
b)\cdot x = a\cdot (b\cdot x) = x$, we have $a\oplus b\in\stab{x}$.
Also, $(\ominus a)\cdot x = (\ominus a)\cdot (a\cdot x) = x$. Hence,
$\ominus a\in\stab{x}$. By the subgyrogroup criterion
\cite[Proposition 14]{TSKW2015ITG}, $\stab{x}\leqslant G$. Item
\eqref{item: Kerner of varphidot as intersection} follows directly
from Theorem \ref{thm: Action as a homomorphism} \eqref{item:
Varphidot is a homomorphism}.
\end{proof}

\begin{proposition}\label{prop: Stabilizer invariant under gyroautomorphism}
Let $X$ be a $G$-set. For all $a$, $b\in G$, $x\in X$,
$$\gyr{a, b}{(\stab{x})} \subseteq \stab{x}.$$
In particular, if $c\in G$ and $c\cdot x = x$, then $(\gyr{a,
b}{c})\cdot x = x$.
\end{proposition}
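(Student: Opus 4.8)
The plan is to show first that $\gyr{a,b}{(\stab{x})}\subseteq\stab{x}$ for all $a,b\in G$ and $x\in X$, and then to observe that the ``in particular'' clause is merely a restatement of this containment for a single element. So the heart of the matter is to take an arbitrary $c\in\stab{x}$, i.e. an element $c\in G$ with $c\cdot x = x$, and to prove that $\bigl(\gyr{a,b}{c}\bigr)\cdot x = x$ as well.

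To do this I would use the gyrator identity \eqref{eqn: gyrator identity}, which expresses $\gyr{a,b}{c}$ in terms of $\oplus$ and $\ominus$ alone:
\[
\gyr{a,b}{c} = \ominus(a\oplus b)\oplus\bigl(a\oplus(b\oplus c)\bigr).
\]
Applying this element to $x$ and repeatedly using axiom~(2) of Definition~\ref{def: action} (which lets us replace an action by $p\oplus q$ with the composite action by $p$ then $q$), I compute
\[
\bigl(\gyr{a,b}{c}\bigr)\cdot x
= \Bigl(\ominus(a\oplus b)\oplus\bigl(a\oplus(b\oplus c)\bigr)\Bigr)\cdot x
= \bigl(\ominus(a\oplus b)\bigr)\cdot\Bigl(a\cdot\bigl(b\cdot(c\cdot x)\bigr)\Bigr).
\]
Now I invoke $c\cdot x = x$, so the inner part collapses to $a\cdot(b\cdot x)$, which by axiom~(2) equals $(a\oplus b)\cdot x$. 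Hence the whole expression becomes $\bigl(\ominus(a\oplus b)\bigr)\cdot\bigl((a\oplus b)\cdot x\bigr)$, and applying axiom~(2) once more together with $\ominus(a\oplus b)\oplus(a\oplus b)=0$ and axiom~(1) gives $0\cdot x = x$. This establishes $\gyr{a,b}{c}\in\stab{x}$, hence the containment.

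I do not expect any real obstacle here: the argument is a direct unwinding of the gyrator identity using only the two action axioms and the left inverse property of $\oplus$. The one point to be careful about is the bookkeeping in peeling off the action of a sum of several terms — axiom~(2) is stated for two summands, so one must apply it iteratively and respect the left-nested parenthesization in $a\oplus(b\oplus c)$ and in $\ominus(a\oplus b)\oplus(\cdots)$, which matches exactly the nesting in $p\cdot(q\cdot x)=(p\oplus q)\cdot x$. For the final clause, note that saying ``if $c\in G$ and $c\cdot x = x$, then $(\gyr{a,b}{c})\cdot x = x$'' is literally the statement ``$c\in\stab{x}$ implies $\gyr{a,b}{c}\in\stab{x}$,'' so it is an immediate consequence of the displayed inclusion and requires no separate argument.
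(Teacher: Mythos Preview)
Your proof is correct and follows essentially the same route as the paper: both arguments invoke the gyrator identity \eqref{eqn: gyrator identity} to rewrite $\gyr{a,b}{c}$, then apply the action axioms iteratively to reduce $(\gyr{a,b}{c})\cdot x$ to $x$ via $c\cdot x = x$. Your treatment is slightly more explicit about the iterated use of axiom~(2) and about why the ``in particular'' clause needs no separate argument, but the substance is identical.
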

\begin{proof}
Let $c\in\stab{x}$. According to the gyrator identity \eqref{eqn:
gyrator identity}, we compute
\begin{align}
(\gyr{a, b}{c})\cdot x &= [\ominus(a\oplus b)\oplus(a\oplus (b\oplus
c))]\cdot x\notag\\
{} &= \ominus(a\oplus b)\cdot[a\cdot (b\cdot (c\cdot x))]\notag\\
{} &= \ominus(a\oplus b)\cdot[a\cdot (b\cdot x)]\notag\\
{} &= \ominus(a\oplus b)\cdot[(a\oplus b)\cdot x]\notag\\
{} &= x.\notag
\end{align}
Hence, $\gyr{a, b}{c}\in\stab{x}$, which completes the proof.
\end{proof}

\begin{corollary}\label{cor: Stabilizer invariant under gyroautomorphism}
Let $X$ be a $G$-set. For all $a$, $b\in G$, $x\in X$,
\begin{enumerate}
    \item\label{item: Invariant of stabilizer} $\gyr{a, b}{(\stab x)}
    = \stab{x}$, and
    \item\label{item: Stabilizer is an L-subgyrogroup} $\stab{x}$ is an L-subgyrogroup of $G$.
\end{enumerate}
\end{corollary}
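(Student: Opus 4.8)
The plan is to deduce both items of Corollary~\ref{cor: Stabilizer invariant under gyroautomorphism} directly from Proposition~\ref{prop: Stabilizer invariant under gyroautomorphism}, which already supplies one of the two inclusions $\gyr{a,b}{(\stab x)}\subseteq\stab x$. The main point for item~\eqref{item: Invariant of stabilizer} is to upgrade this inclusion to an equality, and the obstacle — if there is one — is purely a finiteness-versus-invertibility issue: $\stab x$ need not be finite, so a counting argument is not available, and one must instead exploit that $\gyr{a,b}{}$ is an \emph{automorphism} of $(G,\oplus)$, hence a bijection of $G$, and that its inverse is again a gyroautomorphism.

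First I would recall the standard identity $(\gyr{a,b}{})^{-1} = \gyr{\ominus a\oplus(a\oplus b)\ominus a,\,?}$ — more cleanly, it is a basic fact of gyrogroup theory (Theorem~2.10 and its consequences in \cite{AU2008AHG}) that the inverse of the gyroautomorphism $\gyr{a,b}{}$ is itself a gyroautomorphism, say $\gyr{b,a}{}$ in the appropriate convention; what matters is only that $(\gyr{a,b}{})^{-1}$ belongs to the set of gyroautomorphisms of $G$. Then applying Proposition~\ref{prop: Stabilizer invariant under gyroautomorphism} to this inverse gyroautomorphism gives $(\gyr{a,b}{})^{-1}(\stab x)\subseteq\stab x$, i.e. $\stab x\subseteq\gyr{a,b}{(\stab x)}$. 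Combining with the forward inclusion from the proposition yields $\gyr{a,b}{(\stab x)} = \stab x$, which is item~\eqref{item: Invariant of stabilizer}.

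For item~\eqref{item: Stabilizer is an L-subgyrogroup}, recall the definition: a subgyrogroup $H\leqslant G$ is an L-subgyrogroup if $\gyr{a,h}{(H)} = H$ for all $a\in G$ and all $h\in H$. By Proposition~\ref{prop: Stabilizer subgyrogroup}\eqref{item: stab x is a subgyrogroup}, $\stab x$ is a subgyrogroup of $G$. Now take any $a\in G$ and any $h\in\stab x$ — in particular $h$ is an arbitrary element of $G$ for the purposes of applying the preceding item — and invoke item~\eqref{item: Invariant of stabilizer} with $b$ replaced by $h$: this gives $\gyr{a,h}{(\stab x)} = \stab x$. Since $a$ and $h$ were arbitrary (with $h\in\stab x$), the L-subgyrogroup condition holds, and $\stab x$ is an L-subgyrogroup of $G$.

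The only step that requires any care is the passage to the inverse gyroautomorphism in item~\eqref{item: Invariant of stabilizer}; everything else is a one-line deduction. If one preferred to avoid citing the explicit inverse, an alternative for the finite case is to note that $\gyr{a,b}{}$ restricted to $\stab x$ is an injection $\stab x\to\stab x$, hence a bijection when $\stab x$ is finite — but since we allow $G$ (and thus $\stab x$) to be infinite, the inverse-gyroautomorphism argument is the clean route, and I would present it that way.
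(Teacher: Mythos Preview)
Your proof is correct and follows essentially the same route as the paper. The only difference is cosmetic: for item~\eqref{item: Invariant of stabilizer} the paper cites Proposition~6 of \cite{TSKW2015ITG} (a general result upgrading $\gyr{a,b}{(H)}\subseteq H$ to equality for any subgyrogroup $H$), whereas you unpack that citation by invoking the inverse gyroautomorphism $\gyr{b,a}{} = (\gyr{a,b}{})^{-1}$ directly; item~\eqref{item: Stabilizer is an L-subgyrogroup} is handled identically in both.
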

\begin{proof}
Item \eqref{item: Invariant of stabilizer} follows directly from
Proposition 6 of \cite{TSKW2015ITG}. By item \eqref{item: Invariant
of stabilizer}, $\stab{x}$ is invariant under the gyroautomorphisms
of $G$. Hence, $\stab{x}\leqslant_L G$.
\end{proof}

\begin{lemma}\label{lem: Equivalence of equal cosets of stabilizer}
Let $X$ be a $G$-set. For all $a$, $b\in G$, $x\in X$, the following
are equivalent:
\begin{enumerate}
    \item\label{item: ax = bx} $a\cdot x = b\cdot x$;
    \item\label{item: (-b+a)x = x} $(\ominus b\oplus a)\cdot x = x$;
    \item\label{item: Equal cosets} $a\oplus \stab{x} = b\oplus\stab{x}$.
\end{enumerate}
\end{lemma}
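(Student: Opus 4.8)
The plan is to prove the cycle $(1)\Rightarrow(2)\Rightarrow(3)\Rightarrow(1)$, using the left cancellation law from Theorem~\ref{thm: cancellation law in gyrogroup} and the basic facts already established about $\stab{x}$, most notably that it is an L-subgyrogroup (Corollary~\ref{cor: Stabilizer invariant under gyroautomorphism}). The implications $(1)\Leftrightarrow(2)$ are the easy part: they follow by applying $\ominus b$ to both sides via the action axioms, since $(\ominus b\oplus a)\cdot x = \ominus b\cdot(a\cdot x)$ and likewise $(\ominus b\oplus b)\cdot x = 0\cdot x = x$, so $a\cdot x = b\cdot x$ precisely when $(\ominus b\oplus a)\cdot x = x$. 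The serious work is in relating condition~(2) to the coset equality in~(3).

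For $(2)\Rightarrow(3)$, assume $(\ominus b\oplus a)\cdot x = x$, i.e. $\ominus b\oplus a\in\stab{x}$. I would write $c:=\ominus b\oplus a$ and aim to show $a\oplus\stab{x}=b\oplus\stab{x}$. Using the left cancellation law, $a = b\oplus(\ominus b\oplus a) = b\oplus c$, so $a\oplus\stab{x} = (b\oplus c)\oplus\stab{x}$. The point is then to show $(b\oplus c)\oplus\stab{x} = b\oplus\stab{x}$ whenever $c\in\stab{x}$. Here one invokes the left gyroassociative law: for $h\in\stab{x}$, $(b\oplus c)\oplus h = b\oplus(c\oplus\gyr{b,c}^{-1}h)$ — or more directly, $b\oplus(c\oplus h') = (b\oplus c)\oplus\gyr{b,c}{h'}$, and since $\stab{x}$ is an L-subgyrogroup it is invariant under all gyroautomorphisms $\gyr{b,c}{}$ that arise (by Corollary~\ref{cor: Stabilizer invariant under gyroautomorphism}(1), $\gyr{b,c}{(\stab{x})}=\stab{x}$ — in fact Proposition~\ref{prop: Stabilizer invariant under gyroautomorphism} suffices). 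Therefore $c\oplus\stab{x} = \stab{x}$ (because $c\in\stab{x}$ and $\stab{x}$ is a subgyrogroup closed under $\oplus$ and inverses), and hence $b\oplus(c\oplus\stab{x}) = b\oplus\stab{x}$, giving $(b\oplus c)\oplus\gyr{b,c}{(\stab{x})} = b\oplus\stab{x}$, and using invariance of $\stab{x}$ under $\gyr{b,c}{}$ this is exactly $(b\oplus c)\oplus\stab{x} = b\oplus\stab{x} = a\oplus\stab{x}$.

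For $(3)\Rightarrow(1)$, assume $a\oplus\stab{x} = b\oplus\stab{x}$. Since $0\in\stab{x}$, the element $a = a\oplus 0$ lies in the left coset $a\oplus\stab{x} = b\oplus\stab{x}$, so $a = b\oplus h$ for some $h\in\stab{x}$. Then $a\cdot x = (b\oplus h)\cdot x = b\cdot(h\cdot x) = b\cdot x$, which is~(1). This closes the cycle.

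**Main obstacle.** The one step that requires genuine care is $(2)\Rightarrow(3)$: one must manipulate left cosets $a\oplus\stab{x}$ through the gyroassociative law without the comfort of ordinary associativity, and the argument only goes through because $\stab{x}$ is an L-subgyrogroup, so that the gyroautomorphisms $\gyr{b,c}{}$ appearing when reassociating $(b\oplus c)\oplus h$ map $\stab{x}$ onto itself. (Equivalently, one may organize this around the known fact that for an L-subgyrogroup $H$ the relation ``$\ominus b\oplus a\in H$'' coincides with ``$a\oplus H = b\oplus H$'', which is essentially the statement that $G/H$ is a genuine partition.) Everything else is a routine application of the cancellation laws and the two action axioms.
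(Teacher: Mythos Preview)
Your proof is correct and follows essentially the same route as the paper. Both arguments handle $(1)\Leftrightarrow(2)$ directly from the action axioms, and both reduce $(2)\Leftrightarrow(3)$ to the fact that $\stab{x}$ is an L-subgyrogroup (Corollary~\ref{cor: Stabilizer invariant under gyroautomorphism}); the paper simply quotes the known coset criterion $\ominus b\oplus a\in H\iff a\oplus H=b\oplus H$ for L-subgyrogroups, whereas you unpack that criterion by hand via the left gyroassociative law and the invariance $\gyr{b,c}{(\stab{x})}=\stab{x}$---exactly the alternative you flag in your parenthetical remark.
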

\begin{proof}
The proof of the equivalence \eqref{item: ax = bx} $\Leftrightarrow$
\eqref{item: (-b+a)x = x} is straightforward, using the axioms of a
gyrogroup action. By Corollary \ref{cor: Stabilizer invariant under
gyroautomorphism}, $\stab{x}\leqslant_L G$. It follows that
$$(\ominus b\oplus a)\cdot x = x \quad\iff\quad \ominus b\oplus a\in\stab{x}\quad\iff\quad a\oplus \stab{x} = b\oplus\stab{x}.$$
This proves the equivalence \eqref{item: (-b+a)x = x}
$\Leftrightarrow$ \eqref{item: Equal cosets}.
\end{proof}

\par We are now in a position to state a gyrogroup version of the orbit-stabilizer theorem.

\begin{theorem}[The orbit-stabilizer theorem]\label{thm: Orbit-stabilizer theorem}
Let $G$ be a gyrogroup acting on a set $X$. For each $x\in X$, there
exists a bijection from the orbit of $x$ to the coset space
$G/\stab{x}$. In particular, if $G$ is a finite gyrogroup, then
\begin{equation}\label{eqn: orbit-stabilizer theorem}
\abs{G} = \abs{\orb{x}}\abs{\stab{x}}.
\end{equation}
\end{theorem}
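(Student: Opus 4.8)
The plan is to construct an explicit bijection $\Phi\colon G/\stab{x}\to\orb{x}$ and verify it is well defined and bijective, which is entirely parallel to the classical argument once Lemma~\ref{lem: Equivalence of equal cosets of stabilizer} is in hand. First I would define $\Phi$ on a coset $a\oplus\stab{x}$ by $\Phi(a\oplus\stab{x}) = a\cdot x$. The key input is that $\stab{x}$ is an L-subgyrogroup of $G$ (Corollary~\ref{cor: Stabilizer invariant under gyroautomorphism}), so the coset space $G/\stab{x}$ is an honest disjoint partition of $G$ and the notation is unambiguous. Well-definedness of $\Phi$ is exactly the implication \eqref{item: Equal cosets} $\Rightarrow$ \eqref{item: ax = bx} of Lemma~\ref{lem: Equivalence of equal cosets of stabilizer}: if $a\oplus\stab{x} = b\oplus\stab{x}$, then $a\cdot x = b\cdot x$, so $\Phi$ does not depend on the choice of coset representative.

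Next I would check that $\Phi$ is injective and surjective. Surjectivity is immediate from the description $\orb{x} = \cset{a\cdot x}{a\in G}$ in \eqref{eqn: orbit}: any element of $\orb{x}$ has the form $a\cdot x = \Phi(a\oplus\stab{x})$. For injectivity, suppose $\Phi(a\oplus\stab{x}) = \Phi(b\oplus\stab{x})$, i.e. $a\cdot x = b\cdot x$; then the implication \eqref{item: ax = bx} $\Rightarrow$ \eqref{item: Equal cosets} of Lemma~\ref{lem: Equivalence of equal cosets of stabilizer} gives $a\oplus\stab{x} = b\oplus\stab{x}$. Hence $\Phi$ is a bijection from $G/\stab{x}$ to $\orb{x}$; composing with its inverse yields the asserted bijection from $\orb{x}$ to $G/\stab{x}$.

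Finally, for the counting formula in the finite case, I would invoke the index formula \eqref{eqn: index formula}, which applies precisely because $\stab{x}$ is an L-subgyrogroup: $\abs{G} = [G\colon\stab{x}]\abs{\stab{x}}$. Since $[G\colon\stab{x}] = \abs{G/\stab{x}} = \abs{\orb{x}}$ by the bijection just constructed, we obtain $\abs{G} = \abs{\orb{x}}\abs{\stab{x}}$.

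Honestly, there is no serious obstacle here: all the real work has been front-loaded into Corollary~\ref{cor: Stabilizer invariant under gyroautomorphism} (that $\stab{x}$ is an L-subgyrogroup) and Lemma~\ref{lem: Equivalence of equal cosets of stabilizer} (the three-way equivalence). The only point that requires a moment of care—and the reason the L-subgyrogroup property is indispensable—is that in a general gyrogroup left cosets of an arbitrary subgyrogroup need neither be disjoint nor of equal size, so without the L-property the coset space $G/\stab{x}$ would not even be well behaved enough to state the theorem. Once that is noted, the proof is a direct transcription of the group-theoretic argument.
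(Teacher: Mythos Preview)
Your proof is correct and essentially identical to the paper's: the paper defines the map in the opposite direction, $\theta\colon\orb{x}\to G/\stab{x}$ by $\theta(a\cdot x) = a\oplus\stab{x}$, and uses Lemma~\ref{lem: Equivalence of equal cosets of stabilizer} for well-definedness and injectivity, then the index formula \eqref{eqn: index formula} for the counting statement. The only difference is the direction of the bijection, which is immaterial.
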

\begin{proof}
Let $\theta$ be the map defined on $\orb{x}$ by
$$\theta(a\cdot x) = a\oplus\stab{x},\qquad a\in G.$$
By Lemma \ref{lem: Equivalence of equal cosets of stabilizer},
$\theta$ is well defined and injective. That $\theta$ is surjective
is clear. Since $\stab{x}$ is an L-subgyrogroup of $G$, \eqref{eqn:
index formula} implies $\abs{G} = [G\colon\stab{x}]\abs{\stab{x}}$.
Because
$$[G\colon \stab{x}] = \abs{G/\stab{x}} = \abs{\orb{x}},$$ we obtain
$\abs{G} = \abs{\orb{x}}\abs{\stab{x}}$.
\end{proof}

\par Let $X$ be a $G$-set. The \textit{set of fixed points of $X$},
denoted by $\Fix{X}$, is defined as
\begin{equation}
\Fix{X} = \cset{x\in X}{a\cdot x = x \textrm{ for all }a\in G}.
\end{equation}
From \eqref{eqn: orbit} one finds that $x\in \Fix{X}$ if and only if
$\orb{x} = \set{x}$. The following theorem can be regarded as a
generalization of the class equation familiar from finite group
theory.

\begin{theorem}[The orbit decomposition theorem]\label{thm: Orbit decomposition theorem}
Let $G$ be a gyrogroup acting on a finite set $X$. Let $x_1$, $x_2,
\dots$, $x_n$ be representatives for the distinct nonsingleton
orbits in $X$. Then
\begin{equation}
\abs{X} = \abs{\Fix{X}}+\lsum{i=1}{n}[G\colon\stab{x_i}].
\end{equation}
\end{theorem}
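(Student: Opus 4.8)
The plan is to exploit the fact, already established, that the relation $\sim$ of \eqref{eqn: Equivalence relation from action} is an equivalence relation on $X$ (Theorem \ref{thm: Equivalence relation from action}), so that the orbits form a partition of $X$. Since $X$ is finite, there are only finitely many orbits, each of them finite, and distinct orbits are pairwise disjoint.

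First I would separate the orbits into singletons and nonsingletons. As remarked just before the statement, $x\in\Fix{X}$ if and only if $\orb{x} = \set{x}$; hence the union of the singleton orbits is precisely $\Fix{X}$. Writing $\orb{x_1},\dots,\orb{x_n}$ for the distinct nonsingleton orbits, the partition of $X$ by orbits yields the disjoint decomposition
$$X = \Fix{X}\cup\orb{x_1}\cup\cdots\cup\orb{x_n},$$
and counting (all sets in sight being finite) gives $\abs{X} = \abs{\Fix{X}} + \lsum{i=1}{n}\abs{\orb{x_i}}$.

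Next I would invoke the orbit-stabilizer theorem (Theorem \ref{thm: Orbit-stabilizer theorem}): for each $i$ the map $\theta$ constructed there is a bijection from $\orb{x_i}$ onto $G/\stab{x_i}$, so $\abs{\orb{x_i}} = \abs{G/\stab{x_i}} = [G\colon\stab{x_i}]$. This bijection exists whether or not $G$ is finite, so no finiteness assumption on $G$ is needed at this step; it is the finiteness of $X$ that guarantees each index $[G\colon\stab{x_i}]$ is finite and that the sum makes sense. Substituting these equalities into the displayed count produces the desired formula.

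There is essentially no serious obstacle here: the argument is a verbatim transcription of the classical proof of the class equation, the only two points requiring the gyrogroup machinery---that $\sim$ is an equivalence relation and that the size of an orbit equals the index of the corresponding stabilizer---having been dispatched in Theorems \ref{thm: Equivalence relation from action} and \ref{thm: Orbit-stabilizer theorem}. The mild care needed is bookkeeping with the finiteness hypotheses: one uses $X$ finite (so the partition is finite and cardinalities add), while the orbit-stabilizer bijection supplies the indices with no finiteness assumption on $G$ whatsoever.
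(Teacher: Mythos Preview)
Your proof is correct and follows essentially the same route as the paper: partition $X$ into orbits, identify the union of singleton orbits with $\Fix{X}$, and replace each $\abs{\orb{x_i}}$ by $[G\colon\stab{x_i}]$ via the orbit-stabilizer bijection. Your additional remark that only the finiteness of $X$ (not of $G$) is needed is a nice clarification beyond what the paper states explicitly.
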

\begin{proof}
Since $\cset{\orb{x}}{x\in X}$ forms a disjoint partition of $X$, it
follows from the orbit-stabilizer theorem that
\begin{align*}
\abs{X} &= \Abs{\lcup{x\in\Fix{X}}{}\orb{x}}+\Abs{\lcup{i=1}{n}\orb{x_i}}\\
{} &= \abs{\Fix{X}} + \lsum{i=1}{n}\abs{\orb{x_i}}\\
{} &= \abs{\Fix{X}} + \lsum{i=1}{n}[G\colon\stab{x_i}].\qedhere
\end{align*}
\end{proof}

\par If $G$ is a finite gyrogroup with trivial gyroautomorphisms, that
is, $G$ is a finite group, then $G$ acts on itself by
group-theoretic conjugation $$a\cdot x = (a\oplus x)\ominus a$$ for
all $a\in G$, $x\in G$. In this case, the set of fixed points of $G$
equals $Z(G)$, the group-theoretic center of $G$. Further, the
stabilizer of $x$ equals $C_G(x)$, the group-theoretic centralizer
of $x$ in $G$. From the orbit decomposition theorem, we recover the
{\it class equation} in finite group theory,
\begin{equation}
\abs{G} = \abs{Z(G)}+\lsum{i=1}{n}[G\colon C_G(x_i)],
\end{equation}
where $x_1$, $x_2, \ldots$, $x_n$ are representatives for the
distinct conjugacy classes of $G$ not contained in the center of
$G$.

\par As a consequence of the orbit-stabilizer theorem, we prove the
{\it Burnside lemma}, also known as the {\it Cauchy-Frobenius
lemma}, for finite gyrogroups.

\par Let $X$ be a $G$-set. Recall that for $x\in X$, the stabilizer of
$x$ in $G$ is the subgyrogroup
$$\stab{x} = \cset{a\in G}{a\cdot x = x}.$$
Dually, for $a\in G$, the set of elements of $X$ fixed by $a$ is
defined as
\begin{equation}
\fix{a} = \cset{x\in X}{a\cdot x = x}.
\end{equation}

\begin{theorem}[The Burnside lemma]\label{thm: Burnside's lemma}
Let $G$ be a finite gyrogroup and let $X$ be a finite $G$-set. The
number of distinct orbits in $X$ equals
$$\dfrac{1}{\abs{G}}\lsum{a\in G}{}\abs{\fix{a}}.$$
\end{theorem}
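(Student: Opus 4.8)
The plan is to imitate the classical double-counting proof of the Burnside lemma; since stabilizers are L-subgyrogroups, the orbit-stabilizer theorem (Theorem \ref{thm: Orbit-stabilizer theorem}) is available, and this is the only ingredient that is not entirely formal. Consider the set $S = \cset{(a, x)\in G\times X}{a\cdot x = x}$. I would count $\abs S$ in two ways. Partitioning $S$ according to its first coordinate, the pairs $(a,x)\in S$ with $a$ fixed are exactly those with $x\in\fix a$, so
\[
\abs S = \lsum{a\in G}{}\abs{\fix a}.
\]
Partitioning $S$ according to its second coordinate, the pairs $(a,x)\in S$ with $x$ fixed are exactly those with $a\in\stab x$, so
\[
\abs S = \lsum{x\in X}{}\abs{\stab x}.
\]

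Next I would evaluate the second sum using the orbit-stabilizer theorem. For each $x\in X$, since $G$ is finite, \eqref{eqn: orbit-stabilizer theorem} gives $\abs{\stab x} = \abs G/\abs{\orb x}$, hence
\[
\lsum{x\in X}{}\abs{\stab x} = \abs G\lsum{x\in X}{}\dfrac{1}{\abs{\orb x}}.
\]
Now group the last sum over the distinct orbits $O_1, O_2, \dots, O_m$ of $X$. Because $\cset{\orb x}{x\in X}$ is a disjoint partition of $X$ and $\abs{\orb x} = \abs{O_j}$ whenever $x\in O_j$, we get
\[
\lsum{x\in X}{}\dfrac{1}{\abs{\orb x}} = \lsum{j=1}{m}\lsum{x\in O_j}{}\dfrac{1}{\abs{O_j}} = \lsum{j=1}{m}1 = m.
\]
Equating the two expressions for $\abs S$ yields $\lsum{a\in G}{}\abs{\fix a} = m\abs G$, and dividing through by $\abs G$ gives that the number of distinct orbits is $\tfrac{1}{\abs G}\lsum{a\in G}{}\abs{\fix a}$, as claimed.

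I do not expect a genuine obstacle here: once the orbit-stabilizer theorem is in hand, the argument transfers verbatim from the group setting. The one point that must be invoked carefully is that the orbit-stabilizer count relies on the index formula $\abs G = [G:\stab x]\abs{\stab x}$, which is valid only because $\stab x$ is an L-subgyrogroup of $G$ (Corollary \ref{cor: Stabilizer invariant under gyroautomorphism}); this is the sole place where the non-associativity of $\oplus$ could in principle cause trouble, and it has already been neutralized.
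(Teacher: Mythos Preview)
Your proof is correct and follows essentially the same double-counting argument as the paper: both count the set of pairs $(a,x)$ with $a\cdot x = x$ first by the $a$-coordinate and then by the $x$-coordinate, and both invoke the orbit-stabilizer theorem (which rests on $\stab{x}$ being an L-subgyrogroup) to collapse the second sum to $m\abs{G}$. The only cosmetic difference is that you write $\abs{\stab x}=\abs{G}/\abs{\orb x}$ and sum the reciprocals, whereas the paper groups by orbit representatives and uses $\abs{\orb{x_i}}\abs{\stab{x_i}}=\abs{G}$ directly.
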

\begin{proof}
Define $Y = \cset{(a, x)\in G\times X}{a\cdot x = x}$. We count the
number of elements of $Y$ in two ways.

\par Note that for each $a\in G$, $(a, x)\in Y$ if and only if $a\cdot
x = x$ if and only if $x\in \fix{a}$. Hence, there are exactly
$\abs{\fix{a}}$ pairs in $Y$ with first coordinate $a$. It follows
that
\begin{equation}\label{eqn: Size of Y, First}
\abs{Y} = \lsum{a\in G}{}\abs{\fix{a}}.
\end{equation}
Dually, $\abs{Y} = \lsum{x\in X}{}\abs{\stab{x}}$. Assume that $X$
is partitioned into $n$ distinct orbits, namely $\orb{x_1},
\orb{x_2},\ldots, \orb{x_n}$. For each $x\in X$, $x$ belongs to
exactly one orbit, so
\begin{equation}\label{eqn: Size of Y, second}
\abs{Y} = \lsum{i=1}{n}\Bp{\lsum{x\in \orb{x_i}}{}\abs{\stab{x}}}.
\end{equation}
Note that $x$ belongs to $\orb{x_i}$ if and only if $\orb{x} =
\orb{x_i}$. By \eqref{eqn: orbit-stabilizer theorem},
$\abs{\stab{x}} = \abs{\stab{x_i}}$ for all $x\in\orb{x_i}$. Hence,
$$\lsum{x\in\orb{x_i}}{}\abs{\stab{x}} =
\lsum{x\in\orb{x_i}}{}\abs{\stab{x_i}} =
\abs{\orb{x_i}}\abs{\stab{x_i}} = \abs{G}.$$ By \eqref{eqn: Size of
Y, second},
\begin{equation}\label{eqn: Size of Y, third}
\abs{Y} = \lsum{i=1}{n}\abs{G} = n\abs{G}.
\end{equation}
Equating \eqref{eqn: Size of Y, First} and \eqref{eqn: Size of Y,
third} completes the proof.
\end{proof}

\par The following results lead to a deep understanding of
\textit{transitive} gyrogroup actions, which will be studied in
detail in Sections \ref{subsec: type of action} and \ref{sec: left
gyroaddition}.

\begin{theorem}\label{thm: stab (a.z) = conjugate of stab z}
Let $X$ be a $G$-set. For all $a\in G$, $x\in X$,
$$\stab{(a\cdot x)} = \cset{(a\oplus c)\boxminus a}{c\in\stab{x}}.$$
\end{theorem}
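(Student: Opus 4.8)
The plan is to establish the set equality by double inclusion, translating membership in the stabilizer $\stab{(a\cdot x)}$ into a statement about the action on $x$ and then identifying the resulting elements via the cancellation laws of Theorem \ref{thm: cancellation law in gyrogroup}. First I would take $g \in \stab{(a\cdot x)}$, so that $g\cdot(a\cdot x) = a\cdot x$, and rewrite this using the action axiom as $(g\oplus a)\cdot x = a\cdot x$. By Lemma \ref{lem: Equivalence of equal cosets of stabilizer} (the equivalence of items \eqref{item: ax = bx} and \eqref{item: (-b+a)x = x}), this is equivalent to $(\ominus a\oplus(g\oplus a))\cdot x = x$, i.e.\ $\ominus a\oplus(g\oplus a) \in \stab{x}$. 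Setting $c = \ominus a\oplus(g\oplus a)$, I then need to solve for $g$ in terms of $c$; by the left cancellation law, $a\oplus c = g\oplus a$, and then by right cancellation law I (item (3) of Theorem \ref{thm: cancellation law in gyrogroup}, with the roles $(b\ominus a)\boxplus a = b$) applied appropriately, or more directly by right cancellation law II, one recovers $g = (a\oplus c)\boxminus a$. This shows $\stab{(a\cdot x)} \subseteq \cset{(a\oplus c)\boxminus a}{c\in\stab{x}}$.

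For the reverse inclusion, I would start from an element of the form $g = (a\oplus c)\boxminus a$ with $c \in \stab{x}$, and verify directly that $g\cdot(a\cdot x) = a\cdot x$. Using right cancellation law II, $g\oplus a = ((a\oplus c)\boxminus a)\oplus a = a\oplus c$, so $g\cdot(a\cdot x) = (g\oplus a)\cdot x = (a\oplus c)\cdot x = a\cdot(c\cdot x) = a\cdot x$, where the last equality uses $c\in\stab{x}$. Hence $g \in \stab{(a\cdot x)}$, completing the double inclusion.

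The main bookkeeping obstacle is keeping the non-associative coaddition straight: one must be careful that the passage from "$\ominus a\oplus(g\oplus a) = c$" to "$g = (a\oplus c)\boxminus a$" uses exactly the right cancellation identity, since $\boxplus$ and $\oplus$ interact only through the specific laws in Theorem \ref{thm: cancellation law in gyrogroup}. In fact the cleanest route is to observe that the map $g \mapsto \ominus a\oplus(g\oplus a)$ and the map $c \mapsto (a\oplus c)\boxminus a$ are mutually inverse bijections of $G$ — this follows from right cancellation laws I and II together with left cancellation — so the two descriptions of the stabilizer correspond under this bijection and the equality is immediate. I would present the argument in the double-inclusion form above, as it is the most transparent, and remark that when the gyroautomorphisms are trivial this recovers the familiar formula $\stab{(a\cdot x)} = a + \stab{x} - a$ for group actions.
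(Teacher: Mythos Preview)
Your proof is correct and follows essentially the same route as the paper: the paper's proof is the terse one-line observation that $b\in\stab{(a\cdot x)}$ if and only if $\ominus a\oplus(b\oplus a)\in\stab{x}$, followed by an appeal to the right cancellation laws I and II, and your double-inclusion argument is precisely the unpacking of that sentence. Your added remark that $g\mapsto \ominus a\oplus(g\oplus a)$ and $c\mapsto (a\oplus c)\boxminus a$ are mutually inverse bijections is a clean way to organize the use of the two cancellation laws.
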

\begin{proof}
For $a$, $b\in G$, $x\in X$, direct computation shows that $b\in
\stab{(a\cdot x)}$ if and only if $\ominus a\oplus (b\oplus
a)\in\stab{x}$. The theorem is an application of the right
cancellation laws I and II.
\end{proof}

\par Because of the absence of associativity in gyrogroups, the
expression \mbox{$a\oplus b\ominus a$} is ambiguous. Hence, the
conjugate of $b$ by $a$ cannot be defined as $a\oplus b\ominus a$,
as in group theory. We formulate an appropriate notion of conjugate
elements in a gyrogroup, which is motivated by Theorem \ref{thm:
stab (a.z) = conjugate of stab z}.

\begin{definition}[Conjugates]\label{def: conjugation}
The element $(a\oplus b)\boxminus a$ is called the \textit{conjugate
of $b$ by $a$}. For a given subset $B$ of $G$, the \textit{conjugate
of $B$ by a}, denoted by $\conj{a}{B}$, is defined by
\begin{equation}
\conj{a}{B} = \cset{(a\oplus b)\boxminus a}{b\in B}.
\end{equation}
An element $a$ is {\it conjugate to} an element $b$ if $a$ is the
conjugate of $b$ by some element of $G$. A subset $A$ is {\it
conjugate to} a subset $B$ if $A$ is the conjugate of $B$ by some
element of $G$.
\end{definition}

\par If $G$ is a gyrogroup with trivial gyroautomorphisms, then
$a\boxplus b = a\oplus b$ for all $a$, $b\in G$ and the gyrogroup
operation $\oplus$ is associative. In this case, the notion of
conjugation in Definition \ref{def: conjugation} reduces to that of
group-theoretic conjugation.

\begin{theorem}\label{thm: related elements have conjugate stabilizer}
Let $X$ be a $G$-set. For all $x$, $y\in X$, if $x\sim y$, then
$\stab{x}$ and $\stab{y}$ are conjugate to each other.
\end{theorem}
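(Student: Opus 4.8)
The plan is to deduce the statement almost immediately from Theorem~\ref{thm: stab (a.z) = conjugate of stab z} and Definition~\ref{def: conjugation}, using the symmetry of the relation $\sim$ to obtain conjugacy in both directions.

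First I would note that the set-builder expression appearing in Theorem~\ref{thm: stab (a.z) = conjugate of stab z}, namely $\cset{(a\oplus c)\boxminus a}{c\in\stab{x}}$, is by Definition~\ref{def: conjugation} nothing but $\conj{a}{\stab{x}}$. Thus Theorem~\ref{thm: stab (a.z) = conjugate of stab z} may be rephrased as the identity $\stab{(a\cdot x)} = \conj{a}{\stab{x}}$, valid for all $a\in G$ and $x\in X$. Now assume $x\sim y$. By the definition of $\sim$ there is some $a\in G$ with $y = a\cdot x$, and then $\stab{y} = \stab{(a\cdot x)} = \conj{a}{\stab{x}}$, so $\stab{y}$ is conjugate to $\stab{x}$.

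It remains to produce conjugacy in the other direction, since the relation "is conjugate to" is not obviously symmetric. Here I would use that $\sim$ is an equivalence relation (Theorem~\ref{thm: Equivalence relation from action}): from $y = a\cdot x$ one gets $x = (\ominus a)\cdot y$, because $(\ominus a)\cdot(a\cdot x) = (\ominus a\oplus a)\cdot x = 0\cdot x = x$ by the axioms of a gyrogroup action. Applying the rephrased Theorem~\ref{thm: stab (a.z) = conjugate of stab z} again, this time with $\ominus a$ in place of $a$ and $y$ in place of $x$, yields $\stab{x} = \conj{\ominus a}{\stab{y}}$, so $\stab{x}$ is conjugate to $\stab{y}$ as well. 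Hence $\stab{x}$ and $\stab{y}$ are conjugate to each other.

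There is no real obstacle in this argument; the only points needing a moment's attention are the cosmetic step of recognizing the expression in Theorem~\ref{thm: stab (a.z) = conjugate of stab z} as the symbol $\conj{a}{\cdot}$ of Definition~\ref{def: conjugation}, and the observation that one must pass from $y = a\cdot x$ to $x = (\ominus a)\cdot y$ in order to get conjugacy in both directions rather than in one.
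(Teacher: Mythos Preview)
Your proposal is correct and follows essentially the same approach as the paper: apply Theorem~\ref{thm: stab (a.z) = conjugate of stab z} (recognized via Definition~\ref{def: conjugation} as $\stab{(a\cdot x)} = \conj{a}{\stab{x}}$) once with $y = a\cdot x$ and once with $x = (\ominus a)\cdot y$ to obtain conjugacy in both directions. The paper's proof is terser but identical in substance.
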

\begin{proof}
Suppose that $x\sim y$. Then $y = a\cdot x$ for some $a\in G$. By
Theorem \ref{thm: stab (a.z) = conjugate of stab z},
$$\stab{y} = \stab{(a\cdot x)} = \conj{a}{\stab{x}}.$$
Similarly, $x = (\ominus a)\cdot y$ implies $\stab{x} =
\conj{\ominus a}{\stab{y}}$.
\end{proof}

\subsection{Invariant subsets}
\par Let $G$ be a gyrogroup acting on $X$. For a given subset $Y$ of
$X$, define
\begin{equation}
GY = \cset{a\cdot y}{a\in G,\, y\in Y}.
\end{equation}
We say that $Y$ is an \textit{invariant} subset of $X$ if $GY
\subseteq Y$. Clearly, $Y$ is an invariant subset of $X$ if and only
if $GY = Y$. For each $x\in X$, $\orb{x}$ is indeed an invariant
subset of $X$. More generally, if $Y$ is a nonempty subset of $X$,
then $GY$ is an invariant subset of $X$. Invariant subsets play the
role of substructures of a $G$-set, as the following proposition
indicates.

\begin{proposition}\label{prop: action on invariant subset}
Let $X$ be a $G$-set. If $Y$ is an invariant subset of $X$, then $G$
acts on $Y$ by the restriction of the action of $G$ to $Y$.
\end{proposition}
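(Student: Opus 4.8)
The plan is to verify directly that the restricted map satisfies the two axioms of a gyrogroup action from Definition \ref{def: action}. Since $Y$ is invariant, we have $GY = Y$, so for every $a \in G$ and $y \in Y$ the element $a \cdot y$ lies in $Y$; hence the assignment $(a, y) \mapsto a \cdot y$ does define a map from $G \times Y$ to $Y$. This well-definedness (that the restriction actually lands in $Y$) is the only point that uses the invariance hypothesis, and it is immediate from the definition of an invariant subset.

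Next I would check the two action axioms. For the identity axiom, $0 \cdot y = y$ holds for all $y \in Y$ simply because it holds for all $x \in X$ and $Y \subseteq X$. For the compatibility axiom, given $a$, $b \in G$ and $y \in Y$, I note that $b \cdot y \in Y$ by invariance, so both sides of $a \cdot (b \cdot y) = (a \oplus b) \cdot y$ are defined, and the equation holds because it already holds in $X$. Thus the restriction of $\cdot$ to $G \times Y$ is a gyrogroup action of $G$ on $Y$.

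There is essentially no obstacle here: the proposition is a routine check, analogous to the fact that a subset of a $G$-set closed under the action is itself a $G$-set in group theory. The only thing worth stating explicitly is that invariance is exactly what guarantees the codomain can be taken to be $Y$ rather than $X$, so that one genuinely obtains an action \emph{on $Y$}; everything else is inherited verbatim from the ambient action.

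\begin{proof}
Since $Y$ is an invariant subset of $X$, we have $GY = Y$, so $a \cdot y \in Y$ for all $a \in G$ and $y \in Y$. Hence the restriction of $\cdot$ to $G \times Y$ is a well-defined map from $G \times Y$ to $Y$. We verify the axioms of Definition \ref{def: action}. First, $0 \cdot y = y$ for all $y \in Y$, since this holds for all elements of $X$ and $Y \subseteq X$. Second, let $a$, $b \in G$ and $y \in Y$. By invariance, $b \cdot y \in Y$, and the identity $a \cdot (b \cdot y) = (a \oplus b) \cdot y$ holds because it holds for all elements of $X$. Therefore the restricted map is a gyrogroup action of $G$ on $Y$.
\end{proof}
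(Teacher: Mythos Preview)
Your proof is correct and follows essentially the same approach as the paper's: both first use $GY = Y$ to ensure $a\cdot y\in Y$, and then note that the action axioms are inherited from $X$. The only difference is that you spell out the verification of the two axioms explicitly, whereas the paper simply declares them immediate.
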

\begin{proof}
Because $GY = Y$, $a\cdot y$ belongs to $Y$ for all $a\in G$ and
$y\in Y$. That the axioms of a gyrogroup action are satisfied is
immediate.
\end{proof}

\subsection{Types of actions}\label{subsec: type of action}
\par Following terminology in the theory of group actions, we present
several types of gyrogroup actions. If $G$ is a gyrogroup with
gyroautomorphisms being the identity automorphism, the terminology
corresponds to its group counterpart.

\begin{definition}[Faithful actions]\label{def: faithful action}
A gyrogroup action of $G$ on $X$ is \textit{faithful} if the
gyrogroup homomorphism afforded by the action is injective.
\end{definition}

\begin{theorem}\label{thm: faithful action from any action}
Let $X$ be a $G$-set and let $\pi\colon G\to\sym{X}$ be the
associated permutation representation. The quotient gyrogroup
$G/\ker{\pi}$ acts faithfully on $X$ by
\begin{equation}\label{eqn: quotient action on X}
(a\oplus \ker{\pi})\cdot x = a\cdot x
\end{equation}
for all $a\in G$, $x\in X$.
\end{theorem}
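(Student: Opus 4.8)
The plan is to proceed in four steps: collect the structural facts about $N=\ker\pi$ that make $G/N$ a well-behaved gyrogroup; check that \eqref{eqn: quotient action on X} is well defined on cosets; verify the two axioms of Definition \ref{def: action}; and finally compute the kernel of the resulting permutation representation to obtain faithfulness.

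First I would record what is already known about $N$. By Proposition \ref{prop: Stabilizer subgyrogroup}, $N=\bigcap_{x\in X}\stab x$, and by Corollary \ref{cor: Stabilizer invariant under gyroautomorphism} each $\stab x$ — hence also $N$ — is invariant under every gyroautomorphism of $G$, so $N$ is an L-subgyrogroup of $G$ (Proposition 6 of \cite{TSKW2015ITG}). Consequently the left cosets of $N$ partition $G$, and, exactly as in the proof of Lemma \ref{lem: Equivalence of equal cosets of stabilizer}, $a\oplus N=b\oplus N$ iff $\ominus b\oplus a\in N$. Since $N$ is also a normal subgyrogroup of $G$ (being the kernel of a gyrogroup homomorphism, see \cite{TSKW2015ITG}), the quotient $G/N$ is a gyrogroup with identity $N=0\oplus N$ and operation $(a\oplus N)\oplus(b\oplus N)=(a\oplus b)\oplus N$.

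Next I would settle well-definedness, which is the one step that genuinely uses that $N$ is a kernel: if $a\oplus N=b\oplus N$, then $\ominus b\oplus a\in N$, so by definition of $N$ we have $(\ominus b\oplus a)\cdot z=z$ for all $z\in X$; fixing $x\in X$ and applying the equivalence \eqref{item: ax = bx} $\Leftrightarrow$ \eqref{item: (-b+a)x = x} of Lemma \ref{lem: Equivalence of equal cosets of stabilizer} (whose proof uses only the action axioms) gives $a\cdot x=b\cdot x$, so $a\cdot x$ depends only on the coset $a\oplus N$. The axioms are then immediate: $(0\oplus N)\cdot x=0\cdot x=x$, while for $a,b\in G$ and $x\in X$,
\begin{align*}
(a\oplus N)\cdot\big((b\oplus N)\cdot x\big)
&=(a\oplus N)\cdot(b\cdot x)=a\cdot(b\cdot x)=(a\oplus b)\cdot x\\
&=\big((a\oplus b)\oplus N\big)\cdot x=\big((a\oplus N)\oplus(b\oplus N)\big)\cdot x,
\end{align*}
using the second axiom for the $G$-set $X$ together with the definition of $\oplus$ on $G/N$.

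For faithfulness, let $\bar\pi\colon G/N\to\sym X$ be the permutation representation afforded by this action (Theorem \ref{thm: Action as a homomorphism}); I would show $\ker\bar\pi$ is trivial. Indeed, $a\oplus N\in\ker\bar\pi$ forces $(a\oplus N)\cdot x=x$ for all $x\in X$, i.e.\ $a\cdot x=x$ for all $x$, i.e.\ $a\in N$, i.e.\ $a\oplus N=N$ is the identity of $G/N$; hence $\bar\pi$ is injective and the action is faithful. I expect no serious obstacle: the content is entirely bookkeeping with cosets, and the only point needing care is the first paragraph — recalling that $\ker\pi$ is simultaneously normal (so that $G/\ker\pi$ is a gyrogroup) and invariant under the gyroautomorphisms of $G$, hence an L-subgyrogroup, so that its cosets behave like those of a normal subgroup and the equal-cosets criterion from Lemma \ref{lem: Equivalence of equal cosets of stabilizer} is available. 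Once those facts are quoted, the rest is routine.
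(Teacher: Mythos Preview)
Your proof is correct and follows essentially the same approach as the paper's: use Lemma \ref{lem: Equivalence of equal cosets of stabilizer} (specifically the equivalence \eqref{item: ax = bx} $\Leftrightarrow$ \eqref{item: (-b+a)x = x}) together with the description of $\ker\pi$ to obtain well-definedness, invoke the quotient gyrogroup structure to verify the action axioms, and compute the kernel of the induced permutation representation for faithfulness. The only difference is that you spell out explicitly why $\ker\pi$ is both normal and an L-subgyrogroup (so that the coset criterion $a\oplus N=b\oplus N\iff\ominus b\oplus a\in N$ is available), whereas the paper simply cites Lemma \ref{lem: Equivalence of equal cosets of stabilizer} and the phrase ``$G/K$ admits the quotient gyrogroup structure'' without unpacking those points.
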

\begin{proof}
Set $K = \ker{\pi}$. By Lemma \ref{lem: Equivalence of equal cosets
of stabilizer}, \eqref{eqn: quotient action on X} is well defined.
Since $G/K$ admits the quotient gyrogroup structure, we have
$$(a\oplus K)\cdot ((b\oplus K)\cdot x) = ((a\oplus K)\oplus(b\oplus
K))\cdot x$$ for all $a, b\in G, x\in X$. This proves that
\eqref{eqn: quotient action on X} defines an action of $G/K$ on $X$.

Let $\dot\vphi$ be the gyrogroup homomorphism afforded by the action
\eqref{eqn: quotient action on X}. Suppose that $a\oplus K\in G/K$
is such that $\dot\vphi(a\oplus K) = \id{X}$. Hence, $(a\oplus
K)\cdot x = x$ for all $x\in X$. By \eqref{eqn: quotient action on
X}, $a\cdot x = x$ for all $x\in X$. Hence, $a\in K$ and so $a\oplus
K = 0\oplus K$. This proves that $\dot\vphi$ is injective and hence
the action \eqref{eqn: quotient action on X} is faithful.
\end{proof}

\begin{definition}[Transitive actions]\label{def: transitive action}
A gyrogroup action of $G$ on $X$ is \textit{transitive} if for each
pair of points $x$ and $y$ in $X$, there is an element $a$ of $G$
such that $a\cdot x = y$.
\end{definition}

\begin{definition}[Fixed-point-free actions]\label{def: free action}
An action of a gyrogroup $G$ on a set $X$ is \textit{fixed point
free} (or simply {\it free}) if $\stab{x} = \set{0}$ for all $x\in
X$.
\end{definition}

\begin{proposition}\label{prop: characterization of free action}
Let $X$ be a $G$-set. The following are equivalent:
\begin{enumerate}
    \item\label{item: free action} The action of $G$ on $X$ is fixed point free.
    \item\label{item: ax = bx --> a = b} For all $a$, $b\in G$, $x\in X$, $a\cdot x = b\cdot x$ implies $a = b$.
    \item\label{item: ax = x --> a = 0} For all $a \in G$, $x\in X$, $a\cdot x = x$ implies $a = 0$.
\end{enumerate}
\end{proposition}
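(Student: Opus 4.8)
The plan is to prove the three conditions equivalent by settling $(1)\Leftrightarrow(3)$ essentially from the definitions and then closing the loop with $(2)\Rightarrow(3)$ and $(3)\Rightarrow(2)$. The only tools needed are Lemma \ref{lem: Equivalence of equal cosets of stabilizer} and the general left cancellation law from Theorem \ref{thm: cancellation law in gyrogroup}.

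First I would dispose of $(1)\Leftrightarrow(3)$. By Definition \ref{def: free action}, the action is fixed point free precisely when $\stab{x} = \set{0}$ for every $x\in X$. Since $0$ always lies in $\stab{x}$ (Proposition \ref{prop: Stabilizer subgyrogroup}), this is the same as saying that $\stab{x}$ contains no element besides $0$, i.e.\ that $a\cdot x = x$ forces $a = 0$ for all $a\in G$ and $x\in X$; this is exactly $(3)$. Next, $(2)\Rightarrow(3)$ is an immediate specialization: if $a\cdot x = x$, rewrite $x = 0\cdot x$, so that $a\cdot x = 0\cdot x$, and apply $(2)$ with $b = 0$ to get $a = 0$.

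The one implication that actually uses the gyrogroup machinery is $(3)\Rightarrow(2)$. Given $a$, $b\in G$ and $x\in X$ with $a\cdot x = b\cdot x$, I would apply Lemma \ref{lem: Equivalence of equal cosets of stabilizer} to rewrite this as $(\ominus b\oplus a)\cdot x = x$; then $(3)$ forces $\ominus b\oplus a = 0$. Since $\ominus b\oplus b = 0$ as well, the general left cancellation law yields $a = b$.

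I do not anticipate a genuine obstacle here. The argument rests only on the definition of a fixed-point-free action, on $0$ lying in every stabilizer, and on the equivalence $a\cdot x = b\cdot x \iff (\ominus b\oplus a)\cdot x = x$ already recorded in Lemma \ref{lem: Equivalence of equal cosets of stabilizer} (whose proof in turn uses that stabilizers are L-subgyrogroups, Corollary \ref{cor: Stabilizer invariant under gyroautomorphism}). The only point requiring a shred of care is to invoke a cancellation law valid in the non-associative setting, rather than naively cancelling $\ominus b$ as one would in a group.
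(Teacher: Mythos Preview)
Your proof is correct and matches the paper's approach: the paper simply says the proposition is ``straightforward, using Definition \ref{def: free action} and the defining properties of a gyrogroup action,'' and your argument is precisely one clean way to fill in those details. The only minor remark is that for $(3)\Rightarrow(2)$ you do not actually need the full Lemma \ref{lem: Equivalence of equal cosets of stabilizer} (in particular no L-subgyrogroup facts): the implication $a\cdot x = b\cdot x \Rightarrow (\ominus b\oplus a)\cdot x = x$ follows directly by applying $\ominus b$ to both sides and using the action axioms, which is all the paper's one-line proof has in mind.
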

\begin{proof}
The proof of the proposition is straightforward, using Definition
\ref{def: free action} and the defining properties of a gyrogroup
action.
\end{proof}

\begin{definition}[Semiregular actions]\label{def: semiregular action}
An action of a gyrogroup $G$ on a set $X$ is \textit{semiregular} if
there exists a point $z$ in $X$ such that $\stab{z} = \set{0}$.
\end{definition}

\begin{proposition}\label{prop: fixed point free --> semiregular}
Every fixed-point-free action is semiregular. Every semiregular
\mbox{action} is faithful.
\end{proposition}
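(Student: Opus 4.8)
The plan is to establish the two assertions independently, each as a short consequence of definitions and results already in place.

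For the first assertion, recall the standing assumption that $X$ is nonempty, and fix any point $z\in X$. Since the action is fixed point free, Definition \ref{def: free action} gives $\stab{z} = \set{0}$. Thus $X$ contains a point with trivial stabilizer, which is exactly the condition in Definition \ref{def: semiregular action}, so the action is semiregular. (In fact fixed-point-freeness is the stronger statement that \emph{every} point has trivial stabilizer.)

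For the second assertion, suppose the action is semiregular and pick $z\in X$ with $\stab{z} = \set{0}$. By Proposition \ref{prop: Stabilizer subgyrogroup}\eqref{item: Kerner of varphidot as intersection} we have $\ker{\dot\vphi} = \bigcap_{x\in X}\stab{x}\subseteq\stab{z} = \set{0}$, and since $0\in\ker{\dot\vphi}$ always, $\ker{\dot\vphi} = \set{0}$. It then remains to promote the triviality of the kernel to injectivity of $\dot\vphi$. Here I would argue as in group theory but with the cancellation laws replacing associativity: if $\dot\vphi(a) = \dot\vphi(b)$, then, using that $\dot\vphi(\ominus b)$ is the inverse of $\dot\vphi(b)$ in $\sym{X}$ (because $\dot\vphi(\ominus b)\circ\dot\vphi(b) = \dot\vphi(\ominus b\oplus b) = \dot\vphi(0) = \id{X}$), we obtain $\dot\vphi(\ominus b\oplus a) = \id{X}$, so $\ominus b\oplus a\in\ker{\dot\vphi} = \set{0}$; hence $\ominus b\oplus a = 0 = \ominus b\oplus b$, and the general left cancellation law of Theorem \ref{thm: cancellation law in gyrogroup} yields $a = b$. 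Therefore $\dot\vphi$ is injective and the action is faithful by Definition \ref{def: faithful action}.

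I do not anticipate a genuine obstacle; the only point deserving explicit attention is the passage from $\ker{\dot\vphi} = \set{0}$ to injectivity of $\dot\vphi$, which in the gyrogroup setting is not automatic and must be carried out with the left cancellation law rather than with associativity. All remaining steps are direct appeals to Definitions \ref{def: free action}, \ref{def: semiregular action}, and \ref{def: faithful action} together with Proposition \ref{prop: Stabilizer subgyrogroup}.
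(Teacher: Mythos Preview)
Your proof is correct and follows the same route as the paper's: the first assertion is immediate from Definitions \ref{def: free action} and \ref{def: semiregular action}, and the second uses Proposition \ref{prop: Stabilizer subgyrogroup}\eqref{item: Kerner of varphidot as intersection} to conclude $\ker{\dot\vphi}=\set{0}$. The only difference is that you spell out the implication ``$\ker{\dot\vphi}=\set{0}\Rightarrow\dot\vphi$ injective'' via the homomorphism property and left cancellation, whereas the paper treats this as known from the background on gyrogroup homomorphisms; your added detail is correct and does not change the approach.
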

\begin{proof}
The first statement is immediate from Definitions \ref{def: free
action} and \ref{def: semiregular action}. The second statement
follows directly from Proposition \ref{prop: Stabilizer
subgyrogroup} \eqref{item: Kerner of varphidot as intersection} and
Definition \ref{def: semiregular action}.
\end{proof}

\begin{theorem}\label{thm: equivalence of fixed point free and semiregular in transitive action}
A transitive action is fixed point free if and only if it is
semiregular.
\end{theorem}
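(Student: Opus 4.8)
The plan is to prove the two implications separately; only one of them actually uses transitivity. For the ``only if'' direction, every fixed-point-free action is semiregular regardless of transitivity: this is the first assertion of Proposition \ref{prop: fixed point free --> semiregular}, obtained by comparing Definitions \ref{def: free action} and \ref{def: semiregular action}. So I would dispose of this direction in a single line.

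For the ``if'' direction, assume the action of $G$ on $X$ is transitive and semiregular, and fix a witness $z\in X$ with $\stab{z}=\set{0}$. I would show directly that $\stab{x}=\set{0}$ for \emph{every} $x\in X$, which by Proposition \ref{prop: characterization of free action} is precisely the condition that the action be fixed point free. So let $x\in X$ and $c\in\stab{x}$. By transitivity there is some $a\in G$ with $a\cdot x=z$. Theorem \ref{thm: stab (a.z) = conjugate of stab z} then gives
\[ (a\oplus c)\boxminus a\in\stab{(a\cdot x)}=\stab{z}=\set{0}, \]
so $(a\oplus c)\boxminus a=0$.

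It remains only to unwind this last equation, which is the sole computational step. Adding $a$ on the right and invoking right cancellation law II (Theorem \ref{thm: cancellation law in gyrogroup}) with $b=a\oplus c$ yields
\[ a\oplus c=\Bp{(a\oplus c)\boxminus a}\oplus a=0\oplus a=a=a\oplus 0. \]
The general left cancellation law (Theorem \ref{thm: cancellation law in gyrogroup}) now forces $c=0$. Since $x$ and $c$ were arbitrary, $\stab{x}=\set{0}$ for all $x\in X$, so the action is fixed point free. The only mild obstacle is keeping the coaddition/cancellation bookkeeping straight; conceptually the argument is simply that semiregularity supplies one point with trivial stabilizer, and transitivity propagates triviality of the stabilizer to every point of $X$ via Theorem \ref{thm: stab (a.z) = conjugate of stab z}.
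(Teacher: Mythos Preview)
Your proof is correct and follows essentially the same line as the paper's. The paper invokes Theorem \ref{thm: related elements have conjugate stabilizer} to say that $\stab{x}$ is conjugate to $\stab{z}=\set{0}$ and then concludes $\stab{x}=\set{0}$ in one step, whereas you go one level lower, applying Theorem \ref{thm: stab (a.z) = conjugate of stab z} directly and unpacking the equation $(a\oplus c)\boxminus a=0$ via the cancellation laws; this is exactly the computation hidden in the paper's last sentence, so the two arguments coincide.
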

\begin{proof}
The forward implication was proved in Proposition \ref{prop: fixed
point free --> semiregular}. Conversely, suppose that the action is
transitive and that $\stab{z} = \set{0}$ for some $z\in X$. Let
$x\in X$. Since the action is transitive, $x\sim z$. By Theorem
\ref{thm: related elements have conjugate stabilizer}, $\stab{x}$
and $\stab{z}$ are conjugate to each other. Since $\stab{z} =
\set{0}$, it follows that $\stab{x} = \set{0}$.
\end{proof}

\begin{definition}[Sharply transitive actions]\label{def: sharply transitive}
A gyrogroup action of $G$ on $X$ is \textit{sharply transitive} or
\textit{regular} if for each pair of points $x$ and $y$ in $X$,
there exists a unique element $a$ of $G$ such that $a\cdot x = y$.
\end{definition}

\begin{theorem}\label{thm: characterization of sharply transitive general case}
An action of a gyrogroup $G$ on a set $X$ is sharply transitive if
and only if it is transitive and fixed point free.
\end{theorem}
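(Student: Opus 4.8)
The plan is to prove both directions directly from the definitions, pulling in the structural results already established for transitive actions. For the forward implication, suppose the action is sharply transitive. Transitivity is immediate from Definition \ref{def: sharply transitive}, since existence of $a$ with $a\cdot x = y$ is part of the hypothesis. To get fixed-point-freeness, I would fix $x\in X$ and suppose $a\cdot x = x$; since also $0\cdot x = x$, the uniqueness clause in the definition of sharp transitivity (applied with $y = x$) forces $a = 0$, so $\stab{x} = \set{0}$. By Proposition \ref{prop: characterization of free action} (or directly), this means the action is fixed point free.

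For the converse, assume the action is transitive and fixed point free. Transitivity gives, for each pair $x, y\in X$, at least one $a\in G$ with $a\cdot x = y$; I only need uniqueness. So suppose $a\cdot x = y$ and $b\cdot x = y$. Then $a\cdot x = b\cdot x$, and by Lemma \ref{lem: Equivalence of equal cosets of stabilizer} this is equivalent to $(\ominus b\oplus a)\cdot x = x$, i.e. $\ominus b\oplus a\in\stab{x}$. Since the action is fixed point free, $\stab{x} = \set{0}$, hence $\ominus b\oplus a = 0$. By uniqueness of inverses in a gyrogroup (the left cancellation law, Theorem \ref{thm: cancellation law in gyrogroup}), $\ominus b\oplus a = 0 = \ominus b\oplus b$ yields $a = b$. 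Thus $a$ is unique and the action is sharply transitive.

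Honestly, there is no real obstacle here — the statement is a bookkeeping corollary once the equivalence in Lemma \ref{lem: Equivalence of equal cosets of stabilizer} and the cancellation laws are in hand. The only point requiring a modicum of care is making sure the passage from $\ominus b\oplus a = 0$ to $a = b$ is justified: in a gyrogroup one should invoke the general left cancellation law rather than "adding $b$ on the left," since associativity is unavailable; alternatively one can cite Proposition \ref{prop: characterization of free action}\,\eqref{item: ax = bx --> a = b} directly, which packages exactly the implication $a\cdot x = b\cdot x \Rightarrow a = b$ under a fixed-point-free action and makes the uniqueness argument a one-liner. I would likely phrase the converse using that proposition to keep the proof short.
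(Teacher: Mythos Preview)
Your proof is correct and matches the paper's own argument essentially line for line: the forward direction is handled exactly as you describe, and for the converse the paper invokes Proposition \ref{prop: characterization of free action}\,\eqref{item: ax = bx --> a = b} directly to pass from $a\cdot x = b\cdot x$ to $a = b$, which is precisely the shortcut you recommend at the end. Your intermediate route through Lemma \ref{lem: Equivalence of equal cosets of stabilizer} and left cancellation is a valid unpacking of that proposition but not needed once you cite it.
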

\begin{proof}
($\Rightarrow$) It is clear that $G$ acts transitively on $X$. Let
$x\in X$ and let $a\in\stab{x}$. Then $a\cdot x = x$. Since $0$ is
the unique element of $G$ such that $0\cdot x = x$, it follows that
$a = 0$. This proves $\stab{x}\subseteq\set{0}$ and so equality
holds.

($\Leftarrow$) Let $x, y\in X$. Since the action is transitive,
there is an element $a\in G$ for which $a\cdot x = y$. Suppose that
$b\in G$ is such that $b\cdot x = y$. Hence, $a\cdot x = b\cdot x$,
which implies $a = b$ by Proposition \ref{prop: characterization of
free action}.
\end{proof}

\begin{theorem}\label{thm: characterization of sharply transitive II}
An action of a gyrogroup $G$ on a set $X$ is sharply transitive if
and only if it is transitive and semiregular.
\end{theorem}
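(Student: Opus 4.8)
The plan is to obtain this characterization by chaining together the two results immediately preceding it, so that essentially no new computation is needed. First I would invoke Theorem \ref{thm: characterization of sharply transitive general case}, which states that the action is sharply transitive if and only if it is both transitive and fixed point free. This reduces the asserted equivalence to the following claim: for an action that is already known to be transitive, being fixed point free is the same as being semiregular.

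But that claim is exactly the content of Theorem \ref{thm: equivalence of fixed point free and semiregular in transitive action}, which says that a transitive action is fixed point free if and only if it is semiregular. Hence the classes ``transitive and fixed point free'' and ``transitive and semiregular'' coincide, and combining this with the reduction from Theorem \ref{thm: characterization of sharply transitive general case} gives the theorem. Concretely, the two displayed implications would read: if the action is sharply transitive, then by Theorem \ref{thm: characterization of sharply transitive general case} it is transitive and fixed point free, whence by Theorem \ref{thm: equivalence of fixed point free and semiregular in transitive action} it is transitive and semiregular; conversely, if it is transitive and semiregular, then Theorem \ref{thm: equivalence of fixed point free and semiregular in transitive action} upgrades semiregularity to fixed-point-freeness, and Theorem \ref{thm: characterization of sharply transitive general case} then yields sharp transitivity.

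Alternatively, one could argue directly without citing Theorem \ref{thm: equivalence of fixed point free and semiregular in transitive action}: the forward direction is immediate since a sharply transitive action is transitive and has $\stab{x} = \set{0}$ for every $x$ (the identity being the unique element moving $x$ to $x$), in particular for a single point; for the converse, transitivity makes every $x$ equivalent to the distinguished point $z$ with $\stab{z} = \set{0}$, so by Theorem \ref{thm: related elements have conjugate stabilizer} each $\stab{x}$ is conjugate to $\set{0}$ and hence trivial, giving a fixed-point-free action to which Theorem \ref{thm: characterization of sharply transitive general case} applies.

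I do not expect any genuine obstacle here; the substantive work was already done in Proposition \ref{prop: fixed point free --> semiregular}, Theorem \ref{thm: equivalence of fixed point free and semiregular in transitive action}, and Theorem \ref{thm: characterization of sharply transitive general case}. The one point worth keeping in mind is that the implication ``semiregular $\Rightarrow$ fixed point free'' really does use transitivity (through the conjugacy of stabilizers of related points), which is why the hypothesis must include transitivity and why the theorem is phrased for transitive actions rather than arbitrary ones.
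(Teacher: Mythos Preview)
Your proposal is correct and follows essentially the same route as the paper: both directions are obtained by combining Theorem \ref{thm: characterization of sharply transitive general case} with the equivalence between fixed-point-freeness and semiregularity under transitivity. The only cosmetic difference is that for the forward implication the paper cites Proposition \ref{prop: fixed point free --> semiregular} (fixed point free $\Rightarrow$ semiregular, no transitivity needed) rather than the biconditional Theorem \ref{thm: equivalence of fixed point free and semiregular in transitive action}; your citation works equally well.
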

\begin{proof}
The forward implication follows from Theorem \ref{thm:
characterization of sharply transitive general case} and Proposition
\ref{prop: fixed point free --> semiregular}. The converse follows
from Theorems \ref{thm: equivalence of fixed point free and
semiregular in transitive action} and \ref{thm: characterization of
sharply transitive general case}.
\end{proof}

\par Theorem \ref{thm: equivalence of fixed point free and semiregular
in transitive action} asserts that on the class of transitive
gyrogroup actions, the notions of fixed point free actions and
semiregular actions are equivalent. The proof of Theorem \ref{thm:
equivalence of fixed point free and semiregular in transitive
action} is a good example to see how the internal structure of a
$G$-set interacts with the gyrogroup structure of $G$. Next, we
study transitive actions in detail. In particular, we give a
necessary and sufficient condition on a subgyrogroup $H$ of $G$ so
that $G$ acts transitively on the coset space $G/H$ in a natural
way.

\section{Actions by left gyroaddition}\label{sec: left gyroaddition}
\par As in the theory of group actions, the study of permutation
representations of a gyrogroup $G$ reduces to the study of
transitive permutation representations of $G$. The transitive
permutation representations of $G$ in turn are determined by the
structure of subgyrogroups of $G$. From this point of view, the
transitive permutation representations of gyrogroups play the role
of indecomposable representations in the theory of gyrogroup
actions. In this section, we develop the elementary theory of
transitive gyro-group actions, following the study of group actions
by Aschbacher \cite{MA2000FGT}.

\par Recall that a group $\Gam$ acts on itself by {\it left
multiplication} $g\cdot x = gx$ for all $g\in \Gam$ and $x\in \Gam$.
In contrast, a gyrogroup $G$ does not in general act on itself by
left gyroaddition, as Proposition \ref{prop: G acts on itselft by
left gyroaddition} indicates. Therefore, we will determine a
necessary and sufficient condition for a gyrogroup $G$ to act on its
coset space $G/H$ by left gyroaddition.

\begin{proposition}\label{prop: G acts on itselft by left gyroaddition}
A gyrogroup $G$ acts on itself by
\begin{equation}\label{eqn: left gyroaddition}
a\cdot x = a\oplus x,\qquad a\in G,\, x\in G,
\end{equation}
if and only if $\gyr{a, b}{} = \id{G}$ for all $a$, $b\in G$.
\end{proposition}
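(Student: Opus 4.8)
The plan is to prove both directions by a direct computation, exploiting the general left cancellation law (Theorem~\ref{thm: cancellation law in gyrogroup}) and the fact that $0$ is a two-sided identity. First I would check the easy direction: if $\gyr{a,b}{} = \id{G}$ for all $a,b\in G$, then the left gyroassociative law (G3) becomes $a\oplus(b\oplus c) = (a\oplus b)\oplus c$, so $\oplus$ is associative. Then for the map $a\cdot x = a\oplus x$ we have $0\cdot x = 0\oplus x = x$ by (G1), and $a\cdot(b\cdot x) = a\oplus(b\oplus x) = (a\oplus b)\oplus x = (a\oplus b)\cdot x$ by associativity; hence both axioms of Definition~\ref{def: action} hold, and $G$ acts on itself by \eqref{eqn: left gyroaddition}.

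For the converse, suppose $a\cdot x = a\oplus x$ defines an action of $G$ on itself. The second axiom of Definition~\ref{def: action} says $a\oplus(b\oplus x) = (a\oplus b)\oplus x$ for all $a,b,x\in G$, i.e.\ $\oplus$ is associative. Comparing this with the left gyroassociative law (G3), $a\oplus(b\oplus c) = (a\oplus b)\oplus\gyr{a,b}{c}$, gives $(a\oplus b)\oplus\gyr{a,b}{c} = (a\oplus b)\oplus c$ for all $a,b,c\in G$. By the general left cancellation law, $\gyr{a,b}{c} = c$ for all $c\in G$, so $\gyr{a,b}{} = \id{G}$ for all $a,b\in G$.

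There is essentially no obstacle here: the only subtlety is recognizing that axiom~(2) of a gyrogroup action is \emph{exactly} the statement that $\oplus$ is associative, after which (G3) together with left cancellation immediately forces the gyroautomorphisms to be trivial. I would also note in passing that axiom~(1) of the action, $0\cdot x = x$, is automatic from (G1) and imposes no constraint, so the whole content of the proposition is carried by axiom~(2).
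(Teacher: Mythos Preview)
Your proposal is correct and follows essentially the same approach as the paper: both compare axiom~(2) of the action with the left gyroassociative law (G3) and invoke left cancellation to force $\gyr{a,b}{}=\id{G}$, while the reverse direction reduces to the observation that trivial gyroautomorphisms make $(G,\oplus)$ a group. The only difference is cosmetic---you spell out the ``easy'' direction in more detail, whereas the paper dismisses it in one sentence.
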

\begin{proof}
Suppose that \eqref{eqn: left gyroaddition} defines an action of $G$
on itself. Let $a$, $b\in G$. For $x\in G$, we have $(a\oplus
b)\oplus x = a\cdot(b\cdot x) = (a\oplus b)\oplus\gyr{a, b}{x}$.
Hence, $\gyr{a, b}{x} = x$ by the left cancellation law. Since $x$
is arbitrary, $\gyr{a, b}{} = \id{G}$. If $\gyr{a, b}{} = \id{G}$
for all $a, b\in G$, then $(G, \oplus)$ is a group and the converse
holds trivially.
\end{proof}

\begin{lemma}\label{lem: Equivalence of gyr is identity on coset space}
Let $H$ be a subgyrogroup of $G$. Then $\gyr{a, b}{(x\oplus
H)}\subseteq x\oplus H$ for all $a, b, x\in G$ if and only if
\begin{enumerate}
    \item\label{item: invariance under gyromap} $\gyr{a, b}{(H)}\subseteq H$ for all $a$, $b\in G$, and
    \item\label{item: -x+gyr[a, b]x in H} $\ominus x \oplus\gyr{a, b}{x}\in H$ for all $a$, $b$, $x\in G$.
\end{enumerate}
\end{lemma}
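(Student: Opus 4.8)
The plan is to prove the two directions separately, unwinding the statement $\gyr{a,b}{(x\oplus H)}\subseteq x\oplus H$ by writing a typical element of $x\oplus H$ as $x\oplus h$ with $h\in H$ and applying the fact that $\gyr{a,b}{}$ is an automorphism of $(G,\oplus)$. The key identity I will use throughout is
\[
\gyr{a,b}{(x\oplus h)} = \gyr{a,b}{x}\oplus\gyr{a,b}{h},
\]
valid because $\gyr{a,b}{}\in\aut{G,\oplus}$ by axiom (G3). So membership of $\gyr{a,b}{(x\oplus h)}$ in $x\oplus H$ is, by the general left cancellation law (Theorem~\ref{thm: cancellation law in gyrogroup}\,(2)) applied after left-adding $\ominus x$, equivalent to $\ominus x\oplus\bigl(\gyr{a,b}{x}\oplus\gyr{a,b}{h}\bigr)\in H$. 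This is the bridge between the coset-level statement and the two pointwise conditions \eqref{item: invariance under gyromap} and \eqref{item: -x+gyr[a, b]x in H}.

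For the reverse implication, assume \eqref{item: invariance under gyromap} and \eqref{item: -x+gyr[a, b]x in H}. Fix $a,b,x\in G$ and $h\in H$. Using the left gyroassociative law (G3) to reassociate, I would compute
\[
\ominus x\oplus\bigl(\gyr{a,b}{x}\oplus\gyr{a,b}{h}\bigr)
= \bigl(\ominus x\oplus\gyr{a,b}{x}\bigr)\oplus\gyr{\ominus x,\gyr{a,b}{x}}{\gyr{a,b}{h}}.
\]
By hypothesis \eqref{item: -x+gyr[a, b]x in H} the first summand $\ominus x\oplus\gyr{a,b}{x}$ lies in $H$; by hypothesis \eqref{item: invariance under gyromap} the element $\gyr{a,b}{h}$ lies in $H$, and then $\gyr{\ominus x,\gyr{a,b}{x}}{\gyr{a,b}{h}}$ also lies in $H$ by applying \eqref{item: invariance under gyromap} again (with the gyroautomorphism generated by $\ominus x$ and $\gyr{a,b}{x}$). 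Since $H$ is closed under $\oplus$, the sum is in $H$, hence $\gyr{a,b}{(x\oplus h)}\in x\oplus H$.

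For the forward implication, assume $\gyr{a,b}{(x\oplus H)}\subseteq x\oplus H$ for all $a,b,x\in G$. Taking $x=0$ and using $0\oplus H=H$ gives \eqref{item: invariance under gyromap} immediately. For \eqref{item: -x+gyr[a, b]x in H}, take $h=0$: then $x\oplus 0=x$, so $\gyr{a,b}{x}\in x\oplus H$, which by the computation in the first paragraph (left-adding $\ominus x$ and cancelling) means exactly $\ominus x\oplus\gyr{a,b}{x}\in H$. The main obstacle — really the only delicate point — is the reassociation step in the reverse direction: one must be careful to apply the left gyroassociative law in the correct form and then recognize the resulting gyroautomorphism $\gyr{\ominus x,\gyr{a,b}{x}}{}$ as one to which hypothesis \eqref{item: invariance under gyromap} applies. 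Everything else is bookkeeping with the cancellation laws and the fact that $H$ is a subgyrogroup.
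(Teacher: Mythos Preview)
Your proposal is correct. The overall strategy matches the paper's---both directions hinge on the identity $\gyr{a,b}{(x\oplus h)}=\gyr{a,b}{x}\oplus\gyr{a,b}{h}$ and the equivalence $y\in x\oplus H \Leftrightarrow \ominus x\oplus y\in H$---but the execution differs slightly. In the forward direction you derive \eqref{item: -x+gyr[a, b]x in H} by the clean move $h=0$, whereas the paper first deduces \eqref{item: invariance under gyromap}, notes this makes $H$ an L-subgyrogroup, rewrites $\gyr{a,b}{(x\oplus H)}=(\gyr{a,b}{x})\oplus H$, and then invokes the coset-equality criterion $x\oplus H=(\gyr{a,b}{x})\oplus H\Rightarrow\ominus x\oplus\gyr{a,b}{x}\in H$. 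In the reverse direction the paper again works at the coset level ($x\sim_H\gyr{a,b}{x}$ gives $x\oplus H=(\gyr{a,b}{x})\oplus H=\gyr{a,b}{(x\oplus H)}$), while you compute elementwise via the gyroassociative law, producing the extra gyroautomorphism $\gyr{\ominus x,\gyr{a,b}{x}}{}$ and absorbing it with a second application of \eqref{item: invariance under gyromap}. Your route is a touch more hands-on but entirely self-contained; the paper's route is shorter because it outsources the bookkeeping to the L-subgyrogroup coset machinery.
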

\begin{proof}
($\Rightarrow$) By Proposition 6 of \cite{TSKW2015ITG}, $\gyr{a,
b}{(x\oplus H)} = x\oplus H$ for all $a$, $b$, $x\in G$. Setting $x
= 0$, we obtain item \eqref{item: invariance under gyromap}. From
item \eqref{item: invariance under gyromap}, we obtain $H\leqslant_L
G$. Since $\gyr{a, b}{}$ preserves $\oplus$ and $\gyr{a, b}{(H)} =
H$, we have $\gyr{a, b}{(x\oplus H)} = (\gyr{a, b}{x})\oplus H$.
Since $x\oplus H = \gyr{a, b}{(x\oplus H)} = (\gyr{a, b}{x})\oplus
H$, it follows that $\ominus x\oplus\gyr{a, b}{x}\in H$, which
proves item \eqref{item: -x+gyr[a, b]x in H}.

($\Leftarrow$) By condition \eqref{item: invariance under gyromap},
$H\leqslant_L G$. Hence, condition \eqref{item: -x+gyr[a, b]x in H}
is equivalent to saying that $x\sim_H\gyr{a, b}{x}$, which implies
$x\oplus H = (\gyr{a, b}{x})\oplus H = \gyr{a, b}{(x\oplus H)}$.
\end{proof}

\par The following theorem gives a necessary and sufficient condition for
a gyrogroup $G$ to act on its coset space $G/H$ in a natural way.

\begin{theorem}\label{thm: Left action on coset space}
Let $H$ be a subgyrogroup of $G$. Then $\gyr{a, b}{(x\oplus
H)}\subseteq x\oplus H$ for all $a$, $b$, $x\in G$ if and only if
$G$ acts on the coset space $G/H$ by
\begin{equation}\label{eqn: left gyroaddition action}
a\cdot (x\oplus H) = (a\oplus x)\oplus H
\end{equation}
for all $a\in G$, $x\oplus H\in G/H$.
\end{theorem}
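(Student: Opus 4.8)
The plan is to prove the two implications separately, using Lemma \ref{lem: Equivalence of gyr is identity on coset space} as the bridge between the geometric hypothesis on gyrations and the algebraic facts needed to verify the action axioms.

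For the forward direction, assume $\gyr{a, b}{(x\oplus H)}\subseteq x\oplus H$ for all $a, b, x\in G$. First I would invoke Lemma \ref{lem: Equivalence of gyr is identity on coset space} to extract two consequences: condition \eqref{item: invariance under gyromap}, which gives $\gyr{a,b}{(H)}\subseteq H$ and hence $H\leqslant_L G$ (so that the coset space $G/H$ is a genuine disjoint partition and \eqref{eqn: left gyroaddition action} at least has a well-defined target set), and condition \eqref{item: -x+gyr[a, b]x in H}, which says $\ominus x\oplus\gyr{a,b}{x}\in H$. Next I would check that \eqref{eqn: left gyroaddition action} is well defined: if $x\oplus H = y\oplus H$, then $\ominus y\oplus x\in H$ by the L-subgyrogroup coset relation, and I must show $(a\oplus x)\oplus H = (a\oplus y)\oplus H$, i.e.\ $\ominus(a\oplus y)\oplus(a\oplus x)\in H$. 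Here the left gyroassociative law rewrites $a\oplus x = a\oplus(y\oplus(\ominus y\oplus x)) = (a\oplus y)\oplus\gyr{a,y}{(\ominus y\oplus x)}$, so $\ominus(a\oplus y)\oplus(a\oplus x) = \gyr{a,y}{(\ominus y\oplus x)}$, which lies in $\gyr{a,y}{(H)}\subseteq H$ by condition \eqref{item: invariance under gyromap}. With well-definedness in hand, the identity axiom is immediate since $0\cdot(x\oplus H) = (0\oplus x)\oplus H = x\oplus H$. The compatibility axiom requires $a\cdot(b\cdot(x\oplus H)) = (a\oplus b)\cdot(x\oplus H)$, i.e.\ $(a\oplus(b\oplus x))\oplus H = ((a\oplus b)\oplus x)\oplus H$; by the left gyroassociative law the left-hand coset representative is $((a\oplus b)\oplus\gyr{a,b}{x})\oplus H$, so I need $\gyr{a,b}{x}\oplus H = x\oplus H$, which is precisely condition \eqref{item: -x+gyr[a, b]x in H} combined with $H\leqslant_L G$. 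This completes the forward direction.

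For the converse, assume \eqref{eqn: left gyroaddition action} defines an action. Applying the compatibility axiom with $x = 0$ gives $(a\oplus(b\oplus H)) = ((a\oplus b)\oplus\gyr{a,b}{0})\oplus\cdots$—more carefully, $a\cdot(b\cdot(x\oplus H)) = (a\oplus(b\oplus x))\oplus H$ must equal $(a\oplus b)\cdot(x\oplus H) = ((a\oplus b)\oplus x)\oplus H$, and rewriting the former via the left gyroassociative law yields $((a\oplus b)\oplus\gyr{a,b}{x})\oplus H = ((a\oplus b)\oplus x)\oplus H$. By the general left cancellation law on the coset level (or directly: $\ominus((a\oplus b)\oplus x)\oplus((a\oplus b)\oplus\gyr{a,b}{x})\in H$, which after another application of left gyroassociativity and cancellation reduces to $\ominus x\oplus\gyr{a,b}{x}\in H$), I obtain condition \eqref{item: -x+gyr[a, b]x in H}. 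To get condition \eqref{item: invariance under gyromap} I would argue that a well-defined action on $G/H$ forces the cosets to behave coherently; the cleanest route is to observe that well-definedness of \eqref{eqn: left gyroaddition action} itself, rerun as in the forward direction, forces $\gyr{a,y}{(\ominus y\oplus x)}\in H$ whenever $\ominus y\oplus x\in H$, and specializing $y = 0$ gives $\gyr{a,0}{h}\in H$—this is trivial since $\gyr{a,0}{} = \id{}$—so instead I would take a general $h\in H$, set $x = h$, $y = 0$, noting $h\oplus H = H = 0\oplus H$, hence $(a\oplus h)\oplus H = (a\oplus 0)\oplus H = a\oplus H$, i.e.\ $\ominus a\oplus(a\oplus h)\in H$; but $\ominus a\oplus(a\oplus h) = h$ by the left cancellation law, which is circular. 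The correct specialization uses $x = y\oplus h$ with $y$ arbitrary: then the well-definedness computation gives $\gyr{a,y}{(h)}\in H$ for all $a, y\in G$, $h\in H$, which is even stronger than \eqref{item: invariance under gyromap}. Once both conditions hold, Lemma \ref{lem: Equivalence of gyr is identity on coset space} delivers $\gyr{a,b}{(x\oplus H)}\subseteq x\oplus H$.

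The main obstacle I anticipate is the converse direction: extracting condition \eqref{item: invariance under gyromap} purely from the axioms of the action requires care, because the action axioms only constrain cosets, not individual elements, and one must find the right specialization of the coset identity that disentangles the gyration's effect on $H$ from its effect on a general representative. The forward direction, by contrast, is a fairly mechanical sequence of applications of the left gyroassociative law and the L-subgyrogroup coset calculus, with Lemma \ref{lem: Equivalence of gyr is identity on coset space} doing the heavy lifting.
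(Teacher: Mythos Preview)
Your forward direction is correct and closely parallels the paper's argument; your use of the already-established well-definedness to verify compatibility (reducing it to $(\gyr{a,b}{x})\oplus H = x\oplus H$) is a minor reorganization of the paper's direct computation, and is perfectly valid.

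The converse direction has a genuine gap. Your ``correct specialization'' $x = y\oplus h$ for deriving condition \eqref{item: invariance under gyromap} from well-definedness requires the premise $(y\oplus h)\oplus H = y\oplus H$ as an equality of sets. But for a general subgyrogroup one has $(y\oplus h)\oplus h' = y\oplus(h\oplus\gyr{h,y}{h'})$, so $(y\oplus h)\oplus H\subseteq y\oplus H$ already needs $\gyr{h,y}{h'}\in H$, which is precisely what you are trying to prove; the argument is circular. Similarly, your parenthetical ``direct'' route to condition \eqref{item: -x+gyr[a, b]x in H} computes $\ominus((a\oplus b)\oplus x)\oplus((a\oplus b)\oplus\gyr{a,b}{x}) = \gyr{a\oplus b, x}{(\ominus x\oplus\gyr{a,b}{x})}$, and concluding $\ominus x\oplus\gyr{a,b}{x}\in H$ from this again presupposes that a gyration preserves $H$.

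The paper avoids both problems by exploiting the action axioms once more rather than passing through a coset-membership computation. From $(a\oplus b)\cdot(x\oplus H) = (a\oplus(b\oplus x))\oplus H$ it \emph{acts by} $\ominus(a\oplus b)$ and invokes the gyrator identity \eqref{eqn: gyrator identity} to obtain the equality of \emph{sets} $(\gyr{a,b}{x})\oplus H = x\oplus H$. Specializing $x = h\in H$ and using only that $h\oplus H = H$ (which holds for any subgyrogroup) yields $\gyr{a,b}{h}\in H$ immediately; this gives condition \eqref{item: invariance under gyromap}, and then $\gyr{a,b}{(x\oplus H)} = (\gyr{a,b}{x})\oplus H = x\oplus H$ follows. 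Your vague ``cancellation on the coset level'' is in fact exactly this move; once you recognize that it delivers the set equality $(\gyr{a,b}{x})\oplus H = x\oplus H$ rather than merely a membership statement, both conditions of Lemma \ref{lem: Equivalence of gyr is identity on coset space} drop out without the circular detour through well-definedness.
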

\begin{proof}
($\Rightarrow$) By item \eqref{item: invariance under gyromap} of
Lemma \ref{lem: Equivalence of gyr is identity on coset space},
$H\leqslant_L G$. Hence, $G/H$ forms a disjoint partition of $G$,
and $x\oplus H = y\oplus H$ if and only if $\ominus x\oplus y \in H$
if and only if $\ominus y\oplus x\in H$ for all $x$, $y\in G$. Using
the left gyroassociative law together with Lemma \ref{lem:
Equivalence of gyr is identity on coset space}, one can prove that
\eqref{eqn: left gyroaddition action} is well defined.

Clearly, $0\cdot (x\oplus H) = x\oplus H$ for all $x\in G$. Let $a$,
$b$, $x$ be arbitrary elements of $G$. On one hand, we have $$a\cdot
(b\cdot (x\oplus H)) = (a\oplus(b\oplus x))\oplus H = ((a\oplus
b)\oplus \gyr{a, b}{x})\oplus H.$$ On the other hand, we have
$(a\oplus b)\cdot (x\oplus H) = ((a\oplus b)\oplus x)\oplus H$. From
the gyrator identity \eqref{eqn: gyrator identity} and the left
cancellation law, we obtain
$$
\gyr{a\oplus b, x}{(\ominus x\oplus\gyr{a, b}{x})} =
\ominus((a\oplus b)\oplus x)\oplus((a\oplus b)\oplus \gyr{a, b}{x}).
$$
By item \eqref{item: -x+gyr[a, b]x in H} of Lemma \ref{lem:
Equivalence of gyr is identity on coset space}, $\ominus x\oplus
\gyr{a, b}{x}\in H$ and by item \eqref{item: invariance under
gyromap} of the same lemma, $\gyr{a\oplus b, x}{(\ominus
x\oplus\gyr{a, b}{x})}\in H$. Hence, $\ominus((a\oplus b)\oplus
x)\oplus((a\oplus b)\oplus \gyr{a, b}{x})$ is in $H$. It follows
that $((a\oplus b)\oplus \gyr{a, b}{x})\oplus H = ((a\oplus b)\oplus
x)\oplus H$, which implies $a\cdot (b\cdot (x\oplus H)) = (a\oplus
b)\cdot (x\oplus H)$.

($\Leftarrow$) Suppose that \eqref{eqn: left gyroaddition action}
defines an action of $G$ on $G/H$ and let $a$, $b$, $x\in G$. From
the axioms of a gyrogroup action, we have
\begin{equation}\label{eqn: in proof caraterization of left regular action I}
\ominus(a\oplus b)\cdot ((a\oplus b)\cdot (x\oplus H)) = x\oplus H
\end{equation}
and
\begin{equation}\label{eqn: in proof caraterization of left regular action II}
(a\oplus b)\cdot (x\oplus H) = a\cdot(b\cdot (x\oplus H)) =
(a\oplus(b\oplus x))\oplus H.
\end{equation}
From \eqref{eqn: in proof caraterization of left regular action II}
and the gyrator identity, we have
\begin{align}\label{eqn: in proof caraterization of left regular action III}
\begin{split}
\ominus(a\oplus b)\cdot((a\oplus b)\cdot (x\oplus H)) &=
\ominus(a\oplus b)\cdot((a\oplus (b\oplus x))\oplus H)\\
{} &= \ominus(a\oplus b)\oplus(a\oplus(b\oplus x))\oplus H\\
{} &= (\gyr{a, b}{x})\oplus H.
\end{split}
\end{align}
Equating \eqref{eqn: in proof caraterization of left regular action
I} and \eqref{eqn: in proof caraterization of left regular action
III} gives $(\gyr{a, b}{x})\oplus H = x\oplus H$. In the special
case when $h\in H$, we have $(\gyr{a, b}{h})\oplus H = h\oplus H =
H$, which implies $\gyr{a, b}{h}\in H$. This proves $\gyr{a,
b}{(H)}\subseteq H$ for all $a$, $b\in G$. As in the proof of Lemma
\ref{lem: Equivalence of gyr is identity on coset space}, we have
$\gyr{a, b}{(x\oplus H)} = (\gyr{a, b}{x})\oplus H$ and so $\gyr{a,
b}{(x\oplus H)} = x\oplus H$.
\end{proof}

\par Theorem \ref{thm: Left action on coset space} suggests the
following definition.

\begin{definition}[Left-gyroaddition actions]\label{def: left gyroaddition action}
Let $H$ be a subgyrogroup of $G$. The gyro-group $G$ acts on the
coset space $G/H$ by \textit{left gyroaddition} if \eqref{eqn: left
gyroaddition action} defines a gyrogroup action of $G$ on $G/H$.
\end{definition}

\begin{theorem}\label{thm: necessary and sufficient condition for left gyroaddition action}
Let $H$ be a subgyrogroup of $G$. Then $G$ acts on the coset space
$G/H$ by left gyroaddition if and only if $$\gyr{a, b}{(x\oplus
H)}\subseteq x\oplus H$$ for all $a$, $b\in G$, $x\oplus H\in G/H$.
\end{theorem}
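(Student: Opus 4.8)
The plan is to observe that this theorem is, in essence, a repackaging of Theorem~\ref{thm: Left action on coset space} in the terminology introduced by Definition~\ref{def: left gyroaddition action}, so the proof should be very short. First I would unwind the definition: by Definition~\ref{def: left gyroaddition action}, the assertion ``$G$ acts on $G/H$ by left gyroaddition'' means precisely that the formula \eqref{eqn: left gyroaddition action} defines a gyrogroup action of $G$ on $G/H$. Hence the biconditional to be proved is exactly
\[
\bigl(\forall a,b,x\in G:\ \gyr{a,b}{(x\oplus H)}\subseteq x\oplus H\bigr)\iff\bigl(\text{\eqref{eqn: left gyroaddition action} defines a gyrogroup action of }G\text{ on }G/H\bigr),
\]
which is literally the content of Theorem~\ref{thm: Left action on coset space}. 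Thus both implications follow at once by invoking that theorem, and the write-up reduces to the one-line remark ``combine Definition~\ref{def: left gyroaddition action} with Theorem~\ref{thm: Left action on coset space}.''

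The only point worth a sentence is the matching of quantifiers. The present statement quantifies the containment over ``$a$, $b\in G$ and $x\oplus H\in G/H$,'' whereas Theorem~\ref{thm: Left action on coset space} quantifies over ``$a$, $b$, $x\in G$.'' These specify the same family of conditions: every element of $G/H$ is of the form $x\oplus H$ for some $x\in G$, and the statement $\gyr{a,b}{(x\oplus H)}\subseteq x\oplus H$ is phrased entirely in terms of the coset $x\oplus H$, so it does not depend on the chosen representative $x$. Consequently, letting $x$ range over all of $G$ and letting $x\oplus H$ range over all of $G/H$ impose exactly the same collection of inclusions, and the two formulations of the hypothesis are interchangeable.

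There is essentially no obstacle in this argument; the ``hard work'' was already done in Theorem~\ref{thm: Left action on coset space}, whose nontrivial direction establishes well-definedness of \eqref{eqn: left gyroaddition action} and verifies the composition axiom using the gyrator identity and Lemma~\ref{lem: Equivalence of gyr is identity on coset space}. If a fully self-contained proof were desired one could restate those two implications here, but the cleaner presentation is simply to cite Definition~\ref{def: left gyroaddition action} and Theorem~\ref{thm: Left action on coset space} and note the quantifier equivalence above.
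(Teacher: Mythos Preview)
Your proposal is correct and matches the paper's approach exactly: the paper's proof is the single sentence ``The theorem follows directly from Theorem~\ref{thm: Left action on coset space} and Definition~\ref{def: left gyroaddition action}.'' Your additional remark about the equivalence of quantifying over $x\in G$ versus $x\oplus H\in G/H$ is a helpful clarification that the paper leaves implicit.
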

\begin{proof}
The theorem follows directly from Theorem \ref{thm: Left action on
coset space} and Definition \ref{def: left gyroaddition action}.
\end{proof}

\begin{theorem}\label{thm: Orbit and stabilizer of left action}
Let $H$ be a subgyrogroup of $G$ such that $$\gyr{a, b}{(x\oplus
H)}\subseteq x\oplus H$$ for all $a, b, x\in G$. Then $G$ acts
transitively on $G/H$ by left gyroaddition. The stabilizer of a
point $x\oplus H$ is the conjugate of $H$ by $x$, that is,
\begin{equation}
\stab{(x\oplus H)} = \cset{(x\oplus h)\boxminus x}{h\in H}.
\end{equation}
\end{theorem}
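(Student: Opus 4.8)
The plan is to deduce both assertions from the general orbit--stabilizer formalism of Section~\ref{sec: gyrogroup action}, specialized to the $G$-set $G/H$. First I would observe that the standing hypothesis $\gyr{a, b}{(x\oplus H)}\subseteq x\oplus H$ is exactly the condition of Theorem~\ref{thm: necessary and sufficient condition for left gyroaddition action}, so $G$ does act on $G/H$ by left gyroaddition, $a\cdot(x\oplus H) = (a\oplus x)\oplus H$, and every result about $G$-sets applies. From item~\eqref{item: invariance under gyromap} of Lemma~\ref{lem: Equivalence of gyr is identity on coset space}, this hypothesis also forces $H$ to be an L-subgyrogroup of $G$, so $G/H$ is a genuine disjoint partition of $G$ and the coset calculus $u\oplus H = v\oplus H \iff \ominus v\oplus u\in H$ is available.

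Next I would prove transitivity by computing a single orbit. Taking the base point $0\oplus H = H$, we have $a\cdot(0\oplus H) = (a\oplus 0)\oplus H = a\oplus H$ for every $a\in G$, so by \eqref{eqn: orbit} the orbit $\orb{(0\oplus H)}$ equals $\cset{a\oplus H}{a\in G} = G/H$. Since the orbits are the equivalence classes of the relation of Theorem~\ref{thm: Equivalence relation from action} and there is only one of them, any two cosets are related; hence the action is transitive. (Alternatively, given $x\oplus H$ and $y\oplus H$, the element $a = y\boxminus x$ satisfies $a\oplus x = y$ by right cancellation law~II, whence $a\cdot(x\oplus H) = y\oplus H$ directly.)

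Finally I would compute the stabilizers. For the base point, $a\in\stab{(0\oplus H)}$ iff $a\oplus H = 0\oplus H$ iff $a\in H$, so $\stab{(0\oplus H)} = H$. Since $x\oplus H = x\cdot(0\oplus H)$ lies in the orbit of $0\oplus H$, Theorem~\ref{thm: stab (a.z) = conjugate of stab z} applied with the element $x$ and the point $0\oplus H$ yields
$$\stab{(x\oplus H)} = \stab{(x\cdot(0\oplus H))} = \cset{(x\oplus c)\boxminus x}{c\in\stab{(0\oplus H)}} = \cset{(x\oplus h)\boxminus x}{h\in H},$$
which is precisely the conjugate $\conj{x}{H}$ of $H$ by $x$ in the sense of Definition~\ref{def: conjugation}.

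The argument is essentially a matter of assembling earlier results; the one step that genuinely uses the hypothesis beyond guaranteeing that the left-gyroaddition action exists is the identification $\stab{(0\oplus H)} = H$, where the L-subgyrogroup property of $H$ is needed to pass from the coset identity $a\oplus H = H$ to the membership $a\in H$. Everything else is routine once Theorem~\ref{thm: stab (a.z) = conjugate of stab z} is available.
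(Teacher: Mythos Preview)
Your proof is correct and follows essentially the same route as the paper: invoke Theorem~\ref{thm: necessary and sufficient condition for left gyroaddition action} to obtain the action, establish transitivity (the paper uses your parenthetical alternative $a = y\boxminus x$ with right cancellation~II), and then compute $\stab{(x\oplus H)}$ by writing $x\oplus H = x\cdot(0\oplus H)$ and applying the conjugate-stabilizer formula. The paper cites Theorem~\ref{thm: related elements have conjugate stabilizer} rather than Theorem~\ref{thm: stab (a.z) = conjugate of stab z} and leaves the identification $\stab{(0\oplus H)} = H$ implicit, so your version is in fact slightly more explicit.
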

\begin{proof}
By Theorem \ref{thm: necessary and sufficient condition for left
gyroaddition action}, $G$ acts on $G/H$ by left gyroaddition. Let
$x\oplus H$, $y\oplus H\in G/H$. Set $a = y\boxminus x$. By the
right cancellation law II, $a\cdot (x\oplus H) = y\oplus H$. This
proves that $G$ acts transitively on $G/H$. Let $x\in G$. Then
$x\oplus H = x\cdot(0\oplus H)$ and by Theorem \ref{thm: related
elements have conjugate stabilizer}, $\stab{(x\oplus H)} =
\conj{x}{0\oplus H} = \conj{x}{H}$.
\end{proof}

\begin{theorem}\label{thm: left gyroaddition is not semiregular}
If $H\ne\set{0}$ and $\gyr{a, b}{(x\oplus H)}\subseteq x\oplus H$
for all $a$, $b$, $x\in G$, then the action by left gyroaddition is
not semiregular.
\end{theorem}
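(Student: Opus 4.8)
The plan is to show that \emph{every} point of the coset space $G/H$ has a nontrivial stabilizer, which by Definition \ref{def: semiregular action} rules out semiregularity.

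First I would invoke Theorem \ref{thm: Orbit and stabilizer of left action}: under the standing hypothesis $\gyr{a, b}{(x\oplus H)}\subseteq x\oplus H$ for all $a, b, x\in G$, the gyrogroup $G$ indeed acts on $G/H$ by left gyroaddition, and for each $x\in G$ the stabilizer of the point $x\oplus H$ equals the conjugate $\conj{x}{H} = \cset{(x\oplus h)\boxminus x}{h\in H}$. Thus it suffices to prove that $\conj{x}{H}\ne\set{0}$ for every $x\in G$.

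Fix an arbitrary $x\in G$ and, using $H\ne\set{0}$, choose $h\in H$ with $h\ne 0$. The key point is that the conjugate $(x\oplus h)\boxminus x$ of $h$ by $x$ is again nonzero. Indeed, if $(x\oplus h)\boxminus x = 0$, then adjoining $x$ on the right and applying the right cancellation law II, Theorem \ref{thm: cancellation law in gyrogroup}(4), gives $x\oplus h = ((x\oplus h)\boxminus x)\oplus x = 0\oplus x = x = x\oplus 0$, whence $h = 0$ by the general left cancellation law, Theorem \ref{thm: cancellation law in gyrogroup}(1) — a contradiction. Hence $\stab{(x\oplus H)} = \conj{x}{H}$ contains the nonzero element $(x\oplus h)\boxminus x$ in addition to $0$, so $\stab{(x\oplus H)}\ne\set{0}$.

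Since $x$ was arbitrary and every point of $G/H$ has the form $x\oplus H$, no point of $G/H$ has trivial stabilizer, so by Definition \ref{def: semiregular action} the action by left gyroaddition is not semiregular. The only step requiring any care is the nonvanishing of the conjugate $(x\oplus h)\boxminus x$: because $\oplus$ need not be associative, one must route this argument through the right and left cancellation laws rather than attempting to "cancel $x$" directly, but once those are applied the conclusion is immediate. (Equivalently, one can observe that the conjugation map $b\mapsto(x\oplus b)\boxminus x$ is injective on $G$ by the same cancellation-law reasoning, hence carries $H$, which has at least two elements, onto a stabilizer with at least two elements.)
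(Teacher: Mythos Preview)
Your proof is correct and follows essentially the same route as the paper's: invoke Theorem~\ref{thm: Orbit and stabilizer of left action} to identify $\stab{(x\oplus H)}$ with $\conj{x}{H}$, then use the cancellation laws of Theorem~\ref{thm: cancellation law in gyrogroup} to show $(x\oplus h)\boxminus x = 0$ forces $h = 0$. The paper states this last equivalence more tersely, but your explicit unpacking via right cancellation law~II followed by the general left cancellation law is exactly the intended justification.
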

\begin{proof}
By Theorem \ref{thm: Orbit and stabilizer of left action}, the
stabilizer of $x\oplus H$ equals $\conj{x}{H}$. By Theorem \ref{thm:
cancellation law in gyrogroup}, $(x\oplus h)\boxminus x = 0$ if and
only if $h = 0$. Hence, $\stab{(x\oplus H)}\ne\set{0}$ because
$H\ne\set{0}$.
\end{proof}

\par The following theorem gives an arithmetic necessary condition for a
finite gyro-group to act on its coset space by left gyroaddition.

\begin{theorem}\label{thm: necessary condition for having left gyroaddition}
Let $H$ be a subgyrogroup of a finite gyrogroup $G$. If $G$ acts on
$G/H$ by left gyroaddition, then the index formula holds, $\abs{G} =
[G\colon H]\abs{H}$.
\end{theorem}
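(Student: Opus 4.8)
The plan is to deduce this from Theorem~\ref{thm: Orbit and stabilizer of left action} together with the orbit-stabilizer theorem (Theorem~\ref{thm: Orbit-stabilizer theorem}). First I would invoke Theorem~\ref{thm: necessary and sufficient condition for left gyroaddition action}: since $G$ acts on $G/H$ by left gyroaddition, we have $\gyr{a,b}{(x\oplus H)}\subseteq x\oplus H$ for all $a$, $b$, $x\in G$. This is exactly the hypothesis needed to apply Theorem~\ref{thm: Orbit and stabilizer of left action}, which tells us that the action is transitive and that $\stab{(0\oplus H)} = \conj{0}{H}$. A short computation with the gyrator identity and the cancellation laws shows $\conj{0}{H} = \cset{(0\oplus h)\boxminus 0}{h\in H} = \cset{h}{h\in H} = H$, so in fact $\stab{(0\oplus H)} = H$.

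Next I would apply the orbit-stabilizer theorem to the point $x = 0\oplus H\in G/H$. Since the action is transitive, $\orb{0\oplus H} = G/H$, so $\abs{\orb{0\oplus H}} = \abs{G/H} = [G\colon H]$. Because $G$ is finite, Theorem~\ref{thm: Orbit-stabilizer theorem} gives
\begin{equation*}
\abs{G} = \abs{\orb{0\oplus H}}\,\abs{\stab{(0\oplus H)}} = [G\colon H]\,\abs{H},
\end{equation*}
which is the desired index formula.

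The only subtlety to check carefully is that $[G\colon H]$ as it appears in the index formula (the cardinality of the coset space $G/H$) coincides with the orbit size $\abs{\orb{0\oplus H}}$; but this is immediate from transitivity, which identifies $\orb{0\oplus H}$ with all of $G/H$. Alternatively, one could observe that the hypothesis forces $H$ to be an L-subgyrogroup (by item~\eqref{item: invariance under gyromap} of Lemma~\ref{lem: Equivalence of gyr is identity on coset space}), so that $G/H$ is a genuine disjoint partition of $G$ and the index formula~\eqref{eqn: index formula} applies directly — but routing through the orbit-stabilizer theorem is cleaner since the transitive action has already been set up. I do not anticipate any real obstacle here; the work is entirely in assembling the earlier results in the right order, the mild point being to confirm $\stab{(0\oplus H)} = H$ rather than some proper conjugate.
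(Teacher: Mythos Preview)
Your proposal is correct and follows essentially the same route as the paper: invoke Theorem~\ref{thm: Orbit and stabilizer of left action} to get transitivity and $\stab{(0\oplus H)} = H$, then apply the orbit-stabilizer theorem at the point $0\oplus H$. The paper is slightly terser (it does not explicitly pass through Theorem~\ref{thm: necessary and sufficient condition for left gyroaddition action} or unpack $\conj{0}{H}=H$), but the argument is the same.
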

\begin{proof}
By Theorem \ref{thm: Orbit and stabilizer of left action}, $G$ acts
transitively on $G/H$. Hence, $\abs{\orb{(0\oplus H)}} = [G\colon
H]$. Furthermore, $\stab{(0\oplus H)} = H$. By the orbit-stabilizer
theorem,
$$\abs{G} = \abs{\orb{(0\oplus H)}}\abs{\stab{(0\oplus H)}} = [G\colon H]\abs{H},$$
which was to be proved.
\end{proof}

\par The remainder of this section is devoted to a complete description
of transitive actions of a gyrogroup $G$ on a set $X$. We will prove
that any transitive action of $G$ on $X$ is equivalent to an action
of $G$ on the coset space of a stabilizer subgyrogroup of $G$ by
left gyroaddition. We also state the {\it fundamental isomorphism
theorem for $G$-sets}. We end this section by proving that any
permutation representation of $G$ is uniquely determined by its
orbits, up to rearrangement.

\begin{theorem}\label{thm: action induces left gyroaddition on G/H}
Let $X$ be a $G$-set and let $\dot\vphi\colon G\to\sym{X}$ be the
associated permutation representation. If $H$ is a subgyrogroup of
$G$ containing $\ker{\dot\vphi}$ and if $\gyr{a, b}{(H)}\subseteq H$
for all $a$, $b\in G$, then $G$ acts transitively on $G/H$ by left
gyroaddition.
\end{theorem}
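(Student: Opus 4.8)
The plan is to reduce the statement to Theorem \ref{thm: Orbit and stabilizer of left action}: it suffices to verify that $\gyr{a, b}{(x\oplus H)}\subseteq x\oplus H$ for all $a$, $b$, $x\in G$, since that theorem then delivers the transitive action on $G/H$ by left gyroaddition. By Lemma \ref{lem: Equivalence of gyr is identity on coset space}, this containment is equivalent to the conjunction of two conditions: (i) $\gyr{a, b}{(H)}\subseteq H$ for all $a$, $b\in G$, and (ii) $\ominus x\oplus\gyr{a, b}{x}\in H$ for all $a$, $b$, $x\in G$. Condition (i) is one of the hypotheses, so the entire argument comes down to establishing (ii), and this is the step where the assumption $\ker\dot\vphi\subseteq H$ must enter.

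The first sub-step is to record the key identity $\gyr{a, b}{x}\cdot z = x\cdot z$ for all $z\in X$, i.e. $\sig_{\gyr{a, b}{x}} = \sig_x$. This follows directly from the gyrator identity \eqref{eqn: gyrator identity} together with condition (2) of Definition \ref{def: action}: expanding $\gyr{a, b}{x}\cdot z = [\ominus(a\oplus b)\oplus(a\oplus(b\oplus x))]\cdot z$ and repeatedly collapsing via $u\cdot(v\cdot w) = (u\oplus v)\cdot w$ gives $\ominus(a\oplus b)\cdot\bigl((a\oplus b)\cdot(x\cdot z)\bigr) = x\cdot z$. (Alternatively, one may simply note that $\dot\vphi$ is a gyrogroup homomorphism into the group $\sym{X}$, whose gyroautomorphisms are trivial, whence $\dot\vphi(\gyr{a, b}{x}) = \dot\vphi(x)$.)

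The second sub-step then proves (ii). Using the identity just established and condition (2) of Definition \ref{def: action} once more, for any $z\in X$ I compute $(\ominus x\oplus\gyr{a, b}{x})\cdot z = \ominus x\cdot(\gyr{a, b}{x}\cdot z) = \ominus x\cdot(x\cdot z) = (\ominus x\oplus x)\cdot z = 0\cdot z = z$. Hence $\ominus x\oplus\gyr{a, b}{x}$ fixes every point of $X$, so $\ominus x\oplus\gyr{a, b}{x}\in\ker\dot\vphi$, and by hypothesis $\ker\dot\vphi\subseteq H$; this is exactly (ii). With (i) and (ii) both in hand, Lemma \ref{lem: Equivalence of gyr is identity on coset space} yields $\gyr{a, b}{(x\oplus H)}\subseteq x\oplus H$ for all $a$, $b$, $x\in G$, and Theorem \ref{thm: Orbit and stabilizer of left action} concludes that $G$ acts transitively on $G/H$ by left gyroaddition.

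The only genuine obstacle is recognizing that condition (ii) of Lemma \ref{lem: Equivalence of gyr is identity on coset space} is precisely a membership statement in $\ker\dot\vphi$; once that observation is made, everything reduces to short manipulations with the action axioms and the gyrator identity. The only point requiring care is keeping the non-associative bracketing straight when collapsing the iterated action in the key identity of the first sub-step.
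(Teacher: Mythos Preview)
Your proof is correct and follows essentially the same route as the paper: reduce via Theorem \ref{thm: Orbit and stabilizer of left action} and Lemma \ref{lem: Equivalence of gyr is identity on coset space} to checking $\ominus x\oplus\gyr{a,b}{x}\in H$, and establish this by showing $\dot\vphi(\gyr{a,b}{x})=\dot\vphi(x)$ so that $\ominus x\oplus\gyr{a,b}{x}\in\ker\dot\vphi\subseteq H$. The paper obtains this identity via your parenthetical alternative (triviality of gyroautomorphisms in $\sym{X}$ applied to the homomorphism $\dot\vphi$), which is equivalent to your direct gyrator-identity computation.
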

\begin{proof}
In light of Theorem \ref{thm: Orbit and stabilizer of left action},
we will prove that $\gyr{a, b}{(x\oplus H)} \subseteq x\oplus H$ for
all $a, b, x\in G$. Note that $\dot\vphi(\gyr{a, b}{c}) =
\gyr{\dot\vphi(a), \dot\vphi(b)}{\dot\vphi(c)} = \dot\vphi(c)$ for
all $a$, $b$, $c\in G$ since the gyroautomorphisms of $\sym{X}$ are
the identity automorphism.

Let $a$, $b$, $x\in G$. Then $\dot\vphi(\gyr{a, b}{x}) =
\dot\vphi(x)$. Hence, $\ominus x\oplus\gyr{a, b}{x}\in
\ker{\dot\vphi}$. Since $\ker{\dot\vphi}\subseteq H$, we have
$\ominus x\oplus\gyr{a, b}{x}\in H$. By assumption, $\gyr{a,
b}{(H)}\subseteq H$ for all $a, b\in G$. By Lemma \ref{lem:
Equivalence of gyr is identity on coset space}, $\gyr{a, b}{(x\oplus
H)} \subseteq x\oplus H$, which completes the proof.
\end{proof}

\begin{corollary}
Let $X$ be a $G$-set and let $\dot\vphi$ be the associated
permutation representation. Then $G$ acts transitively on
$G/\ker{\dot\vphi}$ by left gyroaddition.
\end{corollary}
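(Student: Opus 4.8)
The plan is to obtain this as an immediate consequence of Theorem \ref{thm: action induces left gyroaddition on G/H}, applied with the particular choice $H = \ker{\dot\vphi}$. To do so I must check that this $H$ satisfies the two hypotheses of that theorem: that $H$ is a subgyrogroup of $G$ containing $\ker{\dot\vphi}$, and that $\gyr{a, b}{(H)} \subseteq H$ for all $a$, $b \in G$. The first containment, $\ker{\dot\vphi} \subseteq \ker{\dot\vphi}$, is vacuous, so the only genuine points are that $\ker{\dot\vphi}$ is a subgyrogroup and that it is invariant under all gyroautomorphisms of $G$.

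For the subgyrogroup claim I would invoke Proposition \ref{prop: Stabilizer subgyrogroup}\eqref{item: Kerner of varphidot as intersection}, which identifies $\ker{\dot\vphi} = \lcap{x\in X}{}\stab{x}$ as an intersection of the subgyrogroups $\stab{x}$; alternatively one notes directly that the kernel of a gyrogroup homomorphism is a subgyrogroup. For the gyroautomorphism-invariance I would argue as follows: by Corollary \ref{cor: Stabilizer invariant under gyroautomorphism}\eqref{item: Invariant of stabilizer}, each $\stab{x}$ satisfies $\gyr{a, b}{(\stab{x})} = \stab{x}$, and since every gyroautomorphism is a bijection of $G$, it maps the intersection $\lcap{x\in X}{}\stab{x}$ onto itself; hence $\gyr{a, b}{(\ker{\dot\vphi})} = \ker{\dot\vphi}$, in particular $\subseteq \ker{\dot\vphi}$. (One could equally give a one-line direct computation: for $c\in\ker{\dot\vphi}$, $\dot\vphi(\gyr{a, b}{c}) = \gyr{\dot\vphi(a), \dot\vphi(b)}{\dot\vphi(c)} = \dot\vphi(c) = \id{X}$, since $\dot\vphi$ is a gyrogroup homomorphism and the gyroautomorphisms of $\sym{X}$ are trivial, so $\gyr{a, b}{c}\in\ker{\dot\vphi}$.)

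With both hypotheses in hand, Theorem \ref{thm: action induces left gyroaddition on G/H} yields directly that $G$ acts transitively on $G/\ker{\dot\vphi}$ by left gyroaddition, which is the assertion. I expect no real obstacle here: the corollary is essentially the statement of Theorem \ref{thm: action induces left gyroaddition on G/H} in the extremal case $H = \ker{\dot\vphi}$, and the only step requiring a moment's care is confirming that $\ker{\dot\vphi}$ is stable under the gyroautomorphisms of $G$, which follows from the invariance of stabilizers established earlier.
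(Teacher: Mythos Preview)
Your proposal is correct and follows essentially the same approach as the paper: both apply Theorem~\ref{thm: action induces left gyroaddition on G/H} with $H=\ker\dot\vphi$, reducing the task to checking that $\gyr{a,b}{(\ker\dot\vphi)}\subseteq\ker\dot\vphi$. The paper simply cites \cite{TSKW2015ITG} for this invariance, whereas you supply two self-contained arguments (via the intersection formula of Proposition~\ref{prop: Stabilizer subgyrogroup}\eqref{item: Kerner of varphidot as intersection} together with Corollary~\ref{cor: Stabilizer invariant under gyroautomorphism}, or via the homomorphism computation $\dot\vphi(\gyr{a,b}{c})=\dot\vphi(c)$); both are valid and the latter in fact reproduces the reasoning already used inside the proof of Theorem~\ref{thm: action induces left gyroaddition on G/H}.
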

\begin{proof}
It is proved in \cite{TSKW2015ITG} that $\gyr{a,
b}{(\ker{\dot\vphi})}\subseteq \ker{\dot\vphi}$ for all $a$, $b\in
G$. Hence, \mbox{Theorem} \ref{thm: action induces left gyroaddition
on G/H} applies to $\ker{\dot\vphi}$.
\end{proof}

\begin{corollary}\label{cor: G acts on its stabilizer}
Let $X$ be a $G$-set. For each $x\in X$, $G$ acts transitively on
$G/\stab{x}$ by left gyroaddition.
\end{corollary}
\begin{proof}
By Proposition \ref{prop: Stabilizer subgyrogroup} \eqref{item:
Kerner of varphidot as intersection},
$\ker{\dot\vphi}\subseteq\stab{x}$. By Proposition \ref{prop:
Stabilizer invariant under gyroautomorphism}, $$\gyr{a,
b}{(\stab{x})}\subseteq\stab{x}$$ for all $a$, $b\in G$. Hence,
Theorem \ref{thm: action induces left gyroaddition on G/H} applies
to $\stab{x}$.
\end{proof}

\begin{definition}[$G$-maps and equivalences]\label{def: G-map}
Let $X$ and $Y$ be $G$-sets. A map $\Phi\colon X\to Y$ is a
\textit{$G$-map} if $$\Phi(a\cdot x) = a\cdot\Phi(x)$$ for all $a\in
G$, $x\in X$. A bijective $G$-map from $X$ to $Y$ is called an
\textit{equivalence}. If there exists an equivalence from $X$ to
$Y$, $X$ and $Y$ are said to be \textit{equivalent}, denoted by
$X\equiv Y$.
\end{definition}

\par Intuitively, if $X$ and $Y$ are equivalent $G$-sets, then $X$ and
$Y$ are algebraically identical except that the elements and the
actions may be written differently in $X$ and $Y$.

\begin{theorem}[The fundamental isomorphism theorem]
Let $X$ be a $G$-set and let $z\in X$. The map $\Phi$ defined by
\begin{equation}
\Phi(a\oplus \stab{z}) = a\cdot z, \qquad a\in G,
\end{equation}
is an equivalence from $G/\stab{z}$ to $\orb{z}$. In particular,
$G/\stab{z}\equiv \orb{z}$.
\end{theorem}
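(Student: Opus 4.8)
The plan is to recognize that this theorem is essentially the orbit-stabilizer bijection of Theorem~\ref{thm: Orbit-stabilizer theorem}, promoted to an equivalence of $G$-sets, so most of the work has already been done. First I would make sure both sides carry $G$-set structures: the orbit $\orb{z} = \cset{a\cdot z}{a\in G}$ is an invariant subset of $X$, so by Proposition~\ref{prop: action on invariant subset} the restriction of the action of $G$ makes $\orb{z}$ a $G$-set; and by Corollary~\ref{cor: G acts on its stabilizer}, $G$ acts on $G/\stab{z}$ by left gyroaddition, that is, $c\cdot(a\oplus\stab{z}) = (c\oplus a)\oplus\stab{z}$ for all $a, c\in G$. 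Only once both sides are $G$-sets does the assertion ``$\Phi$ is a $G$-map'' make sense.

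Next I would verify that $\Phi$ is a well-defined bijection. For well-definedness and injectivity, the key tool is Lemma~\ref{lem: Equivalence of equal cosets of stabilizer}, which gives, for all $a, b\in G$, the equivalence $a\cdot z = b\cdot z \iff a\oplus\stab{z} = b\oplus\stab{z}$; the forward direction of this equivalence shows $\Phi$ is well defined, and the backward direction shows it is injective. Surjectivity is immediate from the description \eqref{eqn: orbit} of $\orb{z}$, since every element of $\orb{z}$ has the form $a\cdot z = \Phi(a\oplus\stab{z})$ for some $a\in G$.

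Finally I would check the $G$-map property. Let $a, c\in G$. Using the left-gyroaddition formula for the action of $G$ on $G/\stab{z}$ and then the second axiom of a gyrogroup action, compute
\[
\Phi\bigl(c\cdot(a\oplus\stab{z})\bigr) = \Phi\bigl((c\oplus a)\oplus\stab{z}\bigr) = (c\oplus a)\cdot z = c\cdot(a\cdot z) = c\cdot\Phi(a\oplus\stab{z}).
\]
Hence $\Phi$ is a bijective $G$-map, i.e.\ an equivalence, and $G/\stab{z}\equiv\orb{z}$.

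I do not anticipate a genuine obstacle here: the only point that requires care is the bookkeeping of which action is meant on $G/\stab{z}$ (namely left gyroaddition, legitimate by Corollary~\ref{cor: G acts on its stabilizer}), since without that identification the displayed chain of equalities for the $G$-map property would not close up. Everything else reduces to Lemma~\ref{lem: Equivalence of equal cosets of stabilizer} and the defining axioms of a gyrogroup action.
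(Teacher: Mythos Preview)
Your proposal is correct and follows essentially the same approach as the paper: establish the $G$-set structures via Corollary~\ref{cor: G acts on its stabilizer} and Proposition~\ref{prop: action on invariant subset}, obtain bijectivity from Lemma~\ref{lem: Equivalence of equal cosets of stabilizer} (the paper phrases this as ``$\Phi$ is the inverse of $\theta$'' from the proof of Theorem~\ref{thm: Orbit-stabilizer theorem}, which amounts to the same thing), and then verify the $G$-map property by the identical chain of equalities.
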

\begin{proof}
By Corollary \ref{cor: G acts on its stabilizer}, $G$ acts on
$G/\stab{z}$ by left gyroaddition. By \mbox{Proposition} \ref{prop:
action on invariant subset}, $G$ acts on $\orb{z}$ by the
restriction of the action of $G$ to $\orb{z}$. Hence, $G/\stab{z}$
and $\orb{z}$ are $G$-sets. Note that $\Phi$ is indeed the
\mbox{inverse} map of the map $\theta$ given in the proof of Theorem
\ref{thm: Orbit-stabilizer theorem} with $x = z$. Hence, $\Phi$ is a
bijection from $G/\stab{z}$ to $\orb{z}$.

To verify that $\Phi$ is an equivalence from $G/\stab{z}$ to
$\orb{z}$, we compute
\begin{align*}
\Phi(a\cdot (x\oplus \stab{z})) &= \Phi((a\oplus x)\oplus \stab{z})\\
{} &= (a\oplus x)\cdot z\\
{} &= a\cdot(x\cdot z)\\
{} &= a\cdot \Phi(x\oplus \stab{z})
\end{align*}
for all $a\in G$, $x\in G$. Hence, $G/\stab{z}\equiv\orb{z}$.
\end{proof}

\begin{corollary}\label{cor: consequence of the 1st isomorphism theorem}
If $G$ acts transitively on $X$, then $G/\stab{z}\equiv X$ for all
$z\in X$.
\end{corollary}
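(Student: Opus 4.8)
The plan is to derive this directly from the fundamental isomorphism theorem stated just above, whose conclusion is $G/\stab{z}\equiv\orb{z}$ for every $z\in X$. So the only work is to identify $\orb{z}$ with $X$ under the transitivity hypothesis, and then to invoke transitivity of the equivalence relation $\equiv$ on $G$-sets (or simply compose the maps).

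First I would recall the description of orbits from \eqref{eqn: orbit}, namely $\orb{z}=\cset{a\cdot z}{a\in G}$. Fix $z\in X$. Since $\orb{z}\subseteq X$ always holds, it suffices to show $X\subseteq\orb{z}$. Let $y\in X$ be arbitrary; because $G$ acts transitively on $X$ (Definition \ref{def: transitive action}), there is some $a\in G$ with $a\cdot z=y$, hence $y\in\orb{z}$. Therefore $\orb{z}=X$. This is the one substantive step, and it is entirely routine; there is no real obstacle here.

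Finally I would apply the fundamental isomorphism theorem to the point $z$: it provides an equivalence $\Phi\colon G/\stab{z}\to\orb{z}$, $\Phi(a\oplus\stab{z})=a\cdot z$. Since $\orb{z}=X$ as just shown, $\Phi$ is in fact an equivalence from $G/\stab{z}$ onto $X$, so $G/\stab{z}\equiv X$. As $z\in X$ was arbitrary, the conclusion holds for all $z\in X$, completing the proof. (If one prefers to keep $\orb{z}$ and $X$ formally distinct, one notes instead that the identity map $X\to\orb{z}$ is a $G$-map, hence an equivalence, and composes it with $\Phi$, using that $\equiv$ is transitive.)
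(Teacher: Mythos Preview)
Your proof is correct and follows exactly the paper's approach: observe that transitivity gives $\orb{z}=X$ and then apply the fundamental isomorphism theorem. The only difference is that you spell out the inclusion $X\subseteq\orb{z}$ explicitly, whereas the paper simply asserts $\orb{z}=X$.
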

\begin{proof}
Since $\orb{z} = X$, it follows from the fundamental isomorphism
\mbox{theorem} for $G$-sets that $G/\stab{z}\equiv X$.
\end{proof}

\begin{lemma}\label{lem: stabilizer of two equivalent sets}
If $X$ and $Y$ are equivalent $G$-sets via an equivalence
\mbox{$\Phi\colon X\to Y$}, then
$$\stab{x} = \stab{\Phi(x)}$$
for all $x\in X$.
\end{lemma}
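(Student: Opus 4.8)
The plan is to unwind the definitions of stabilizer and of a $G$-map, using bijectivity of $\Phi$ to obtain the reverse containment. Fix $x\in X$. I would first show $\stab{x}\subseteq\stab{\Phi(x)}$: if $a\in\stab{x}$, then $a\cdot x = x$, and applying $\Phi$ together with the $G$-map property gives $a\cdot\Phi(x) = \Phi(a\cdot x) = \Phi(x)$, so $a\in\stab{\Phi(x)}$.

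For the reverse inclusion $\stab{\Phi(x)}\subseteq\stab{x}$, suppose $a\in\stab{\Phi(x)}$, so $a\cdot\Phi(x) = \Phi(x)$. Using the $G$-map property in the form $\Phi(a\cdot x) = a\cdot\Phi(x)$, this reads $\Phi(a\cdot x) = \Phi(x)$. Since $\Phi$ is injective (it is an equivalence, hence a bijective $G$-map), we conclude $a\cdot x = x$, i.e. $a\in\stab{x}$. Combining the two inclusions yields $\stab{x} = \stab{\Phi(x)}$ for every $x\in X$.

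There is really no substantive obstacle here: the only place bijectivity is used is the injectivity of $\Phi$ in the second inclusion, and the first inclusion needs nothing beyond $\Phi$ being a $G$-map. I should be careful only to note that the actions of $G$ on $X$ and on $Y$ are both well defined (which is part of the hypothesis that $X$ and $Y$ are $G$-sets), so that the expressions $a\cdot x$ and $a\cdot\Phi(x)$ make sense. The proof is thus a short two-line argument, and I would present it as such.

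\begin{proof}
Let $x\in X$. If $a\in\stab{x}$, then $a\cdot x = x$, so
$a\cdot\Phi(x) = \Phi(a\cdot x) = \Phi(x)$, whence $a\in\stab{\Phi(x)}$. Conversely, if $a\in\stab{\Phi(x)}$, then $\Phi(a\cdot x) = a\cdot\Phi(x) = \Phi(x)$, and since $\Phi$ is injective, $a\cdot x = x$; that is, $a\in\stab{x}$. Therefore $\stab{x} = \stab{\Phi(x)}$.
\end{proof}
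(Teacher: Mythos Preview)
Your proof is correct and is exactly the straightforward argument the paper has in mind; the paper itself simply states that the lemma follows directly from $\Phi$ being an equivalence without writing out the details. Your two-inclusion argument using the $G$-map property and injectivity of $\Phi$ is the natural way to fill this in.
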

\begin{proof}
The proof of the lemma is straightforward, using the fact that
$\Phi$ is an equivalence.
\end{proof}

\begin{theorem}
Suppose that $G$ acts transitively on $X$ and $Y$ and let $x\in X$,
$y\in Y$. Then $X\equiv Y$ if and only if $\stab{x}$ is conjugate to
$\stab{y}$ in the sense of Definition \ref{def: conjugation}.
\end{theorem}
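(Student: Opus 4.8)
The plan is to prove both directions by exploiting the fundamental isomorphism theorem, which reduces a transitive $G$-set to a coset space of a stabilizer, together with Theorem \ref{thm: related elements have conjugate stabilizer} and Lemma \ref{lem: stabilizer of two equivalent sets}, which track how stabilizers behave under equivalences and within an orbit.

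For the forward direction, suppose $\Phi\colon X\to Y$ is an equivalence. Since $G$ acts transitively on $Y$, the point $\Phi(x)$ and the point $y$ both lie in the single orbit $Y$, so $\Phi(x)\sim y$; by Theorem \ref{thm: related elements have conjugate stabilizer}, $\stab{\Phi(x)}$ is conjugate to $\stab{y}$. On the other hand, Lemma \ref{lem: stabilizer of two equivalent sets} gives $\stab{x} = \stab{\Phi(x)}$. Combining these, $\stab{x}$ is conjugate to $\stab{y}$, as desired. (One may note here that ``conjugate to'' is a symmetric relation by Definition \ref{def: conjugation} together with Theorem \ref{thm: related elements have conjugate stabilizer}, so the asymmetry in which of $\stab{x}$, $\stab{y}$ is written first is harmless.)

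For the converse, suppose $\stab{x}$ is conjugate to $\stab{y}$, say $\stab{y} = \conj{a}{\stab{x}}$ for some $a\in G$. The strategy is to move both $G$-sets onto coset spaces. By Corollary \ref{cor: consequence of the 1st isomorphism theorem}, transitivity gives $X\equiv G/\stab{x}$ and $Y\equiv G/\stab{y}$. So it suffices to show $G/\stab{x}\equiv G/\stab{y}$. The key observation is that $\conj{a}{\stab{x}} = \stab{(a\cdot x)}$ by Theorem \ref{thm: stab (a.z) = conjugate of stab z}, so $\stab{y} = \stab{(a\cdot x)}$; applying the fundamental isomorphism theorem at the point $a\cdot x$ of the transitive $G$-set $X$ yields $G/\stab{(a\cdot x)}\equiv \orb{(a\cdot x)} = X$, i.e.\ $G/\stab{y}\equiv X$. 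Chaining with $X\equiv G/\stab{x}$ (and using that $\equiv$ is an equivalence relation, in particular symmetric and transitive) gives $G/\stab{x}\equiv G/\stab{y}$, hence $X\equiv Y$.

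The main obstacle is bookkeeping rather than a deep difficulty: one must be careful that ``$\equiv$'' is genuinely an equivalence relation (reflexivity via the identity map, symmetry via the inverse of an equivalence, transitivity via composition—each routine but worth a sentence), and that ``conjugate to'' is symmetric so that the statement's phrasing is unambiguous. A secondary subtlety is matching up the point at which one applies the fundamental isomorphism theorem: the cleanest route, as above, is to apply it at $a\cdot x\in X$ so that the relevant stabilizer is literally $\stab{y}$, avoiding any need to re-derive an equivalence $G/\conj{a}{\stab{x}}\equiv G/\stab{x}$ from scratch. I expect the whole argument to be short once these relational facts are in hand.
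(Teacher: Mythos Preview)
Your proof is correct and follows essentially the same approach as the paper: the forward direction combines Lemma \ref{lem: stabilizer of two equivalent sets} with Theorem \ref{thm: related elements have conjugate stabilizer}, and the converse identifies the conjugated stabilizer as a point-stabilizer via Theorem \ref{thm: stab (a.z) = conjugate of stab z} and then applies Corollary \ref{cor: consequence of the 1st isomorphism theorem}. The only cosmetic difference is that the paper assumes $\stab{x} = \conj{a}{\stab{y}}$ and works at $a\cdot y\in Y$, whereas you assume $\stab{y} = \conj{a}{\stab{x}}$ and work at $a\cdot x\in X$; your symmetry remark (justified here since Theorem \ref{thm: related elements have conjugate stabilizer} gives conjugacy in both directions) covers this, and the extra chaining through $G/\stab{x}$ is harmless though unnecessary.
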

\begin{proof}
($\Rightarrow$) Let $\Phi\colon X\to Y$ be an equivalence. By Lemma
\ref{lem: stabilizer of two equivalent sets}, $\stab{x} =
\stab{\Phi(x)}$. Since $G$ acts transitively on $Y$, $y\sim
\Phi(x)$. By Theorem \ref{thm: related elements have conjugate
stabilizer}, $\stab{\Phi(x)}$ is conjugate to $\stab{y}$.

($\Leftarrow$) Suppose that $\stab{x} = \conj{a}{\stab{y}}$ for some
$a\in G$. By Theorem \ref{thm: stab (a.z) = conjugate of stab z},
$$\stab{(a\cdot y)} =\conj{a}{\stab{y}}.$$ Thus, $\stab{x} =
\stab{(a\cdot y)}$. By Corollary \ref{cor: consequence of the 1st
isomorphism theorem}, $G/\stab{x}\equiv X$ and $G/\stab{(a\cdot
y)}\equiv Y$. Hence, $X\equiv Y$.
\end{proof}

\par Let $X$ be an arbitrary $G$-set. The \textit{transitive
components of $X$}, denoted by $\tranc{X}$, is defined to be the
collection of all orbits of $X$ in $G$, that is,
\begin{equation}
\tranc{X} = \cset{\orb{x}}{x\in X}.
\end{equation}
By Theorem \ref{thm: Equivalence relation from action} and
\eqref{eqn: orbit}, $\tranc{X}$ forms a disjoint partition of $X$.
For each $x\in X$, since $\orb{x}$ is an invariant subset of $X$,
Proposition \ref{prop: action on invariant subset} asserts that $G$
acts transitively on $\orb{x}$. The following theorem shows that any
$G$-set is uniquely determined by its transitive components, up to
rearrangement.

\begin{theorem}\label{thm: action is uniquely determined by its transitive component}
Let $X$ and $Y$ be $G$-sets. Then $X\equiv Y$ if and only if there
is a bijection $\bet\colon \tranc{X}\to \tranc{Y}$ such that
$\orb{x}\equiv \bet(\orb{x})$ for all $x\in X$.
\end{theorem}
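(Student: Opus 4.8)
The plan is to prove both directions by constructing the required bijection and equivalences explicitly from the orbit decompositions. Recall that $\tranc{X}$ and $\tranc{Y}$ partition $X$ and $Y$ respectively into orbits, and that $G$ acts transitively on each orbit by the restricted action (Proposition~\ref{prop: action on invariant subset}). Fix once and for all a set of orbit representatives for $X$ and a set for $Y$, so that $\tranc{X} = \set{\orb{x_i}}_{i\in I}$ and $\tranc{Y} = \set{\orb{y_j}}_{j\in J}$ with the orbits indexed by disjoint unions.

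($\Leftarrow$) Suppose $\bet\colon\tranc{X}\to\tranc{Y}$ is a bijection with $\orb{x}\equiv\bet(\orb{x})$ for every $x\in X$. For each orbit $O\in\tranc{X}$, choose an equivalence $\Phi_O\colon O\to\bet(O)$. Since the orbits of $X$ are pairwise disjoint and cover $X$, and likewise for $Y$, and since $\bet$ is a bijection of the orbit sets, the maps $\Phi_O$ glue together to a single well-defined map $\Phi\colon X\to Y$ given by $\Phi(x) = \Phi_{\orb{x}}(x)$. This $\Phi$ is a bijection: its inverse is assembled from the $\Phi_O^{-1}$, again using that $\bet$ is a bijection so that every orbit of $Y$ is $\bet(O)$ for a unique $O\in\tranc{X}$. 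It is a $G$-map because each $\Phi_O$ is: for $a\in G$ and $x\in X$ we have $a\cdot x\in\orb{x}$, so $\Phi(a\cdot x) = \Phi_{\orb{x}}(a\cdot x) = a\cdot\Phi_{\orb{x}}(x) = a\cdot\Phi(x)$. Hence $\Phi$ is an equivalence and $X\equiv Y$.

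($\Rightarrow$) Suppose $\Psi\colon X\to Y$ is an equivalence. A $G$-map sends orbits into orbits, and a bijective $G$-map sends orbits onto orbits: indeed $\Psi(\orb{x}) = \Psi(\cset{a\cdot x}{a\in G}) = \cset{a\cdot\Psi(x)}{a\in G} = \orb{\Psi(x)}$. Therefore the rule $\bet(\orb{x}) = \orb{\Psi(x)}$ is a well-defined map $\tranc{X}\to\tranc{Y}$ (well-definedness: if $\orb{x}=\orb{x'}$ then $x'\sim x$, so $\Psi(x')\sim\Psi(x)$ and $\orb{\Psi(x')}=\orb{\Psi(x)}$). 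It is injective because if $\orb{\Psi(x)} = \orb{\Psi(x')}$ then $\Psi(x')\sim\Psi(x)$, and applying $\Psi^{-1}$ — which is also an equivalence — gives $x'\sim x$, i.e. $\orb{x}=\orb{x'}$; and surjective because any $\orb{y}\in\tranc{Y}$ equals $\orb{\Psi(\Psi^{-1}(y))} = \bet(\orb{\Psi^{-1}(y)})$. Finally, for each $x\in X$ the restriction $\Psi|_{\orb{x}}\colon\orb{x}\to\orb{\Psi(x)} = \bet(\orb{x})$ is a bijective $G$-map, hence an equivalence, so $\orb{x}\equiv\bet(\orb{x})$.

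The only delicate point is well-definedness of the glued map $\Phi$ in the ($\Leftarrow$) direction and of the index bijection $\bet$ in the ($\Rightarrow$) direction; both reduce to the fact that $\tranc{X}$ is a \emph{disjoint} partition (Theorem~\ref{thm: Equivalence relation from action} together with \eqref{eqn: orbit}), so each element of $X$ lies in a unique orbit and the piecewise definitions are unambiguous. Everything else is a routine check that the $G$-map condition is preserved under restriction and gluing, which I would dispatch briefly.
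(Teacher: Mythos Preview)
Your proof is correct and follows essentially the same approach as the paper: in the forward direction you define $\bet(\orb{x}) = \orb{\Psi(x)}$ and restrict the equivalence to orbits, and in the backward direction you glue orbitwise equivalences along the disjoint partition. If anything, you supply more detail than the paper, which leaves the verification that the glued map $\Phi$ is an equivalence as ``it can be shown.''
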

\begin{proof}
($\Rightarrow$) Suppose that $\Phi\colon X\to Y$ is an equivalence.
Since $\tranc{Y}$ is a disjoint partition of $Y$, one can define the
map $\bet$ by
$$\bet(\orb{x}) = \orb{\Phi(x)},\qquad x\in X.$$
The restriction of $\Phi$ to $\orb{x}$ acts as an equivalence from
$\orb{x}$ to $\bet(\orb{x})$.

($\Leftarrow$) Let $\cset{x_i}{i\in I}$ be the set of
representatives for the distinct orbits in $X$ with the property
that $i\ne j$ implies $\orb{x_i}\ne\orb{x_j}$. By assumption, for
each $i\in I$, there exists an equivalence $\Phi_i$ from $\orb{x_i}$
to $\bet(\orb{x_i})$. For each $z\in X$, there is a unique index
$i\in I$ such that  $z\in\orb{x_i}$ by the defining property of
$\cset{x_i}{i\in I}$. Hence, we can define the map $\Phi$ by the
condition
$$z\in\orb{x_i}\quad\textrm{implies}\quad\Phi(z) = \Phi_i(z)$$
for all $z\in X$. It can be shown that $\Phi$ is an equivalence from
$X$ to $Y$.
\end{proof}

\par Theorem \ref{thm: action is uniquely determined by its transitive
component} indicates that the study of permutation representations
of a gyrogroup $G$ is reduced to the study of transitive permutation
representations of $G$. Corollary \ref{cor: consequence of the 1st
isomorphism theorem} indicates that the transitive permutation
representations of $G$ in turn are determined by the structure of
subgyrogroups of $G$ itself. Hence, knowing the subgyrogroups of $G$
amounts to knowing the transitive actions of $G$ on any nonempty set
$X$.

\section{Examples}\label{sec: example}
\par In this section, we provide concrete examples of transitive
gyrogroup actions. Speci-fically, we show that the Einstein and
M\"{o}bius gyrogroups on the open unit ball of $n$-dimensional
Euclidean space $\R^n$ give rise to transitive gyrogroup actions.
M\"{o}bius and Einstein gyrogroups themselves are of great
importance in gyrogroup theory as they provide concrete models for
an abstract gyrogroup. See for instance \cite{TSKW2015EGB,
MFGR2011MGC, ODES2013CMG, AU2008FMG, SKJL2013UBL, AU2007EVA,
AU2008AHG, AU2001BEA}.

\subsection{Gyrogroup of order 15}
\par In \cite{TS2015TAG}, the gyrocommutative gyrogroup $G_{15} =
\set{0, 1, 2,\dots, 14}$ is given. The gyro-automorphisms of
$G_{15}$ form the cyclic subgroup $\set{I,\, A,\, \dots,\, A^4}$ of
the symmetric group $\sym{G_{15}}$ generated by the gyroautomorphism
$A$, where $A$ has cycle decomposition given by
\begin{equation}\label{eqn: gyroautomorphism A}
A = (1\nulll 7 \nulll 5 \nulll 10 \nulll 6)(2 \nulll 3 \nulll 8
\nulll 11 \nulll 14).
\end{equation}

\par In $G_{15}$, $H = \set{0, 4, 9, 12, 13}$ forms an L-subgyrogroup
of $G_{15}$ and the coset space $G_{15}/H$ consists of three
distinct left cosets
\begin{align*}
0\oplus H &= \set{0, 4, 9, 12, 13},\\
1\oplus H &= \set{1, 5, 6, 7, 10},\\
2\oplus H &= \set{2, 3, 8, 11, 14}.
\end{align*}
We have by inspection that $\gyr{a, b}{(x\oplus H)} = x\oplus H$ for
all $a, b, x\in G_{15}$. By Theorem \ref{thm: Orbit and stabilizer
of left action}, $G_{15}$ acts transitively on $G_{15}/H$ by left
gyroaddition.

\subsection{Gyrogroup of permutations and Einstein and M\"{o}bius gyrogroups}
\par Let $G$ be a gyrogroup. For each $a\in G$, the \textit{left
gyrotranslation by $a$}, $L_a$, is defined by
$$L_a(x) = a\oplus x,\qquad x\in G.$$ By the left
cancellation law, $L_a$ defines a permutation of $G$ for all $a\in
G$. Set $$\hat{G} = \cset{L_a}{a\in G}$$ and let $\symz{G}$ denote
the group of permutations of $G$ leaving the \mbox{gyrogroup}
identity fixed, that is,
$$\symz{G} = \cset{\rho\in \sym{G}}{\rho(0)= 0}.$$
It is proved in \cite{TSKW2015ITG} that any permutation $\sig$ of
$G$ can be written uniquely as $\sig = L_a\circ \rho$, where $a\in
G$ and $\rho\in\symz{G}$, and that $\hat{G}$ with operation defined
by
\begin{equation}
L_a\oplus L_b = L_{a\oplus b},\qquad a,\, b\in G,
\end{equation}
forms a gyrogroup isomorphic to $G$ via the gyrogroup isomorphism
$a\mapsto L_a$.

\par Let $\sig$ and $\tau$ be arbitrary permutations of $G$. Suppose
that $\sig$ and $\tau$ have factorizations $\sig = L_a\circ\alp$ and
$\tau = L_b\circ \bet$, where $a$, $b\in G$ and $\alp$,
$\bet\in\symz{G}$. By Theorem 12 of \cite{TSKW2015ITG}, $\sym{G}$
with operation defined by
\begin{equation}\label{eqn: gyroaddition in Sym (G)}
\sig\oplus\tau = L_{a\oplus b}\circ (\alp\circ\bet)
\end{equation}
forms a gyrogroup. Further, $\hat{G}$ forms an L-subgyrogroup of
$\sym{G}$. The gyroautomorphism $\gyr{\sig, \tau}{}$ of $\sym{G}$
generated by $\sig$ and $\tau$ is given by
\begin{equation}
\gyr{\sig, \tau}{\lamb} = L_{\gyr{a, b}{c}}\circ \gam
\end{equation}
for all $\lamb = L_c\circ\gam$ in $\sym{G}$.

\begin{theorem}\label{thm: Sym (G) acts on Sym(G)/G}
The gyrogroup $\sym{G}$ whose operation is given by \eqref{eqn:
gyroaddition in Sym (G)} acts transitively on the coset space
$\sym{G}/\hat{G}$ by left gyroaddition.
\end{theorem}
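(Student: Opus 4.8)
The plan is to invoke Theorem~\ref{thm: Orbit and stabilizer of left action} with the gyrogroup $\sym{G}$ (whose operation is $\oplus$ as in \eqref{eqn: gyroaddition in Sym (G)}) playing the role of the abstract gyrogroup $G$ in that theorem, and with $\hat{G}$ playing the role of the subgyrogroup $H$. By that theorem, it suffices to verify the single hypothesis
$$\gyr{\sig, \tau}{(\lamb\oplus \hat{G})}\subseteq \lamb\oplus\hat{G}$$
for all $\sig$, $\tau$, $\lamb\in\sym{G}$; transitivity of the resulting left-gyroaddition action on $\sym{G}/\hat{G}$ then comes for free, and in fact one also gets the stabilizer description $\stab{(\lamb\oplus\hat{G})} = \conj{\lamb}{\hat{G}}$ as a bonus.

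The verification is a short computation using the explicit formula for the gyroautomorphisms of $\sym{G}$ recorded just above the theorem. First I would write an arbitrary element of the coset $\lamb\oplus\hat{G}$, where $\lamb = L_c\circ\gam$ with $c\in G$ and $\gam\in\symz{G}$: since $\hat{G} = \cset{L_h}{h\in G}$ and $L_c\circ L_h = L_{c\oplus h}$ up to the $\symz{G}$-factor — more precisely, by \eqref{eqn: gyroaddition in Sym (G)}, $\lamb\oplus L_h = L_{c\oplus h}\circ\gam$ — we see that $\lamb\oplus\hat{G} = \cset{L_{c\oplus h}\circ\gam}{h\in G} = \cset{L_d\circ\gam}{d\in G}$, because $h\mapsto c\oplus h$ is a bijection of $G$ by the general left cancellation law. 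In other words, the coset $\lamb\oplus\hat{G}$ is exactly the set of permutations whose $\symz{G}$-part (in the unique factorization $\sig=L_a\circ\rho$) equals $\gam$. Now apply a gyroautomorphism: for $\mu = L_d\circ\gam\in\lamb\oplus\hat{G}$, the formula $\gyr{\sig,\tau}{(L_d\circ\gam)} = L_{\gyr{a,b}{d}}\circ\gam$ (where $\sig = L_a\circ\alp$, $\tau = L_b\circ\bet$) shows that $\gyr{\sig,\tau}{\mu}$ again has $\symz{G}$-part equal to $\gam$, hence lies in $\lamb\oplus\hat{G}$. This establishes the hypothesis, and Theorem~\ref{thm: Orbit and stabilizer of left action} finishes the proof.

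I do not anticipate a serious obstacle; the only point requiring a little care is the description of the coset $\lamb\oplus\hat{G}$ in terms of the unique factorization, i.e., checking that left-translating by elements of $\hat{G}$ does not disturb the $\symz{G}$-component — and this is immediate from the definition \eqref{eqn: gyroaddition in Sym (G)} of $\oplus$ on $\sym{G}$ together with the uniqueness of the factorization $\sig = L_a\circ\rho$ proved in \cite{TSKW2015ITG}. One could alternatively phrase the whole argument via Lemma~\ref{lem: Equivalence of gyr is identity on coset space}, checking its two conditions (invariance $\gyr{\sig,\tau}{(\hat{G})}\subseteq\hat{G}$, which holds since $\hat{G}$ is an L-subgyrogroup of $\sym{G}$, and $\ominus\lamb\oplus\gyr{\sig,\tau}{\lamb}\in\hat{G}$, which is the statement that the two permutations $\lamb$ and $\gyr{\sig,\tau}{\lamb}$ share the same $\symz{G}$-part), but the direct coset computation above is cleaner.
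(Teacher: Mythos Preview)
Your proposal is correct. The paper, however, takes precisely the alternative route you mention at the end: it verifies the two conditions of Lemma~\ref{lem: Equivalence of gyr is identity on coset space} directly, computing $\gyr{\sig,\tau}{L_x} = L_{\gyr{a,b}{x}}$ to get $\gyr{\sig,\tau}{(\hat{G})}\subseteq\hat{G}$, and then computing $\ominus\lamb\oplus\gyr{\sig,\tau}{\lamb} = L_{\ominus c\oplus\gyr{a,b}{c}}\in\hat{G}$, before invoking Theorem~\ref{thm: Orbit and stabilizer of left action}. Your primary approach instead gives an explicit description of the coset $\lamb\oplus\hat{G}$ as the fiber $\{L_d\circ\gam : d\in G\}$ over the fixed $\symz{G}$-component $\gam$, and then reads off invariance from the gyroautomorphism formula. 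The two arguments are equivalent in content; yours makes the coset structure more visible and avoids the intermediate lemma, while the paper's version is slightly shorter and stays closer to the general machinery developed in Section~\ref{sec: left gyroaddition}.
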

\begin{proof}
Let $\sig = L_a\circ\alp$, $\tau = L_b\circ \bet$, and $\lamb =
L_c\circ\gam$ be arbitrary permutations of $G$, where $a$, $b$,
$c\in G$ and $\alp$, $\bet$, $\gam\in\symz{G}$. For all $x\in G$, we
have $\gyr{\sig, \tau}{L_x} = L_{\gyr{a, b}{x}}$. Hence, $\gyr{\sig,
\tau}{(\hat{G})}\subseteq \hat{G}$. We compute
\begin{align*}
\ominus \lamb \oplus \gyr{\sig, \tau}{\lamb} &=
(L_{\ominus c}\circ\gam^{-1})\oplus(L_{\gyr{a, b}{c}}\circ\gam)\\
{} &= L_{\ominus c\oplus\gyr{a,b}{c}}\circ(\gam^{-1}\circ \gam)\\
{} &= L_{\ominus c\oplus\gyr{a, b}{c}}.
\end{align*}
This proves $\ominus \lamb \oplus \gyr{\sig,
\tau}{\lamb}\in\hat{G}$. By Lemma \ref{lem: Equivalence of gyr is
identity on coset space}, $\gyr{\sig, \tau}{(\lamb\oplus
\hat{G})}\subseteq \lamb\oplus \hat{G}$ for all $\sig$, $\tau$,
$\lamb\in\sym{G}$. By Theorem \ref{thm: Orbit and stabilizer of left
action}, $\sym{G}$ acts transitively on $\sym{G}/\hat{G}$.
\end{proof}

\par Let $\B$ denote the open unit ball of $\R^n$, that is, $\B =
\cset{\vec{v}\in\R^n}{\norm{\vec{v}} < 1}$. Recall that $\B$
equipped with Einstein addition
\begin{equation}\label{eqn: Einstein addition}
\vec{u}\oplus_E\vec{v} =
\dfrac{1}{1+\gen{\vec{u},\vec{v}}}\Bb{\vec{u}+
\dfrac{1}{\gam_{\vec{u}}}\vec{v} +
\dfrac{\gam_{\vec{u}}}{1+\gam_{\vec{u}}}\gen{\vec{u},\vec{v}}\vec{u}},
\end{equation}
where $\gam_{\vec{u}}$ is the Lorentz factor given by
$\gam_{\vec{u}} = \dfrac{1}{\sqrt{1-\norm{\vec{u}}^2}}$, forms a
gyrogroup, the so-called \textit{Einstein gyrogroup}
\cite{AU2007EVA}.

\par Applying Theorem \ref{thm: Sym (G) acts on Sym(G)/G} to the
Einstein gyrogroup $(\B, \oplus_E)$, we have $\sym{\B}$ is a
gyrogroup under the operation given by
\begin{equation}\label{eqn: addition of Sym (B), Einstein}
\sig\oplus\tau = L_{\vec{u}\oplus_E\vec{v}}\circ(\alp\circ\bet)
\end{equation}
for all $\sig = L_{\vec{u}}\circ\alp$, $\tau =
L_{\vec{v}}\circ\bet$, where $\vec{u}$, $\vec{v}\in\B$ and $\alp$,
$\bet\in\symz{\B}$. Further, we have the following theorem.

\begin{theorem}
The gyrogroup $\sym{\B}$ whose operation is given by \eqref{eqn:
addition of Sym (B), Einstein} acts transitively on the coset space
$\sym{\B}/\hat{\B}$ by left gyroaddition.
\end{theorem}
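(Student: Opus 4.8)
The plan is to obtain this statement as a direct specialization of Theorem~\ref{thm: Sym (G) acts on Sym(G)/G} to the particular gyrogroup $G = (\B, \oplus_E)$. First I would recall that $(\B, \oplus_E)$ is indeed a gyrogroup; this is the content of the Einstein gyrogroup construction cited just above from \cite{AU2007EVA}, so the hypotheses needed to run the general machinery of the previous section are in place with $G$ replaced by $\B$.

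Next I would observe that the construction of the gyrogroup structure on $\sym{G}$ in \eqref{eqn: gyroaddition in Sym (G)} uses nothing about $G$ beyond its being a gyrogroup: every permutation of $G$ factors uniquely as $L_a\circ\rho$ with $a\in G$ and $\rho\in\symz{G}$, and the operation $\sig\oplus\tau = L_{a\oplus b}\circ(\alp\circ\bet)$ is read off from this factorization. Taking $G = \B$, this is exactly the operation displayed in \eqref{eqn: addition of Sym (B), Einstein}, where now $\vec{u}\oplus_E\vec{v}$ plays the role of $a\oplus b$. Hence $(\sym{\B}, \oplus)$ is a gyrogroup and $\hat{\B} = \cset{L_{\vec{u}}}{\vec{u}\in\B}$ is an L-subgyrogroup of it, again by Theorem~12 of \cite{TSKW2015ITG}, and the gyroautomorphism formula $\gyr{\sig,\tau}{(L_c\circ\gam)} = L_{\gyr{\vec{u},\vec{v}}{c}}\circ\gam$ holds verbatim.

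Finally, Theorem~\ref{thm: Sym (G) acts on Sym(G)/G} applied with $G = \B$ asserts precisely that $\sym{\B}$ acts transitively on the coset space $\sym{\B}/\hat{\B}$ by left gyroaddition, which is the claim.

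I do not anticipate a genuine obstacle here: the entire content lies in recognizing that $\B$ equipped with $\oplus_E$ satisfies the gyrogroup axioms, after which Theorem~\ref{thm: Sym (G) acts on Sym(G)/G} applies without modification. The one point worth a moment's attention is that the unique-factorization statement $\sig = L_{\vec{u}}\circ\rho$ and the gyroautomorphism identity invoked in the proof of Theorem~\ref{thm: Sym (G) acts on Sym(G)/G} are needed over $\sym{\B}$ specifically; but these are instances of the corresponding results of \cite{TSKW2015ITG} valid for an arbitrary gyrogroup, so no new verification is required.
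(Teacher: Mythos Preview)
Your proposal is correct and follows exactly the paper's own approach: the theorem is stated without a separate proof, being presented explicitly as the specialization of Theorem~\ref{thm: Sym (G) acts on Sym(G)/G} to the Einstein gyrogroup $(\B,\oplus_E)$. Your observation that no new verification is required beyond recognizing $(\B,\oplus_E)$ as a gyrogroup matches the paper's treatment precisely.
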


\par Recall also that $\B$ equipped with M\"{o}bius addition
\begin{equation}\label{eqn: Mobius addition}
\vec{u}\oplus_M\vec{v} = \dfrac{(1 + 2\gen{\vec{u},\vec{v}} +
\norm{\vec{v}}^2)\vec{u} + (1 - \norm{\vec{u}}^2)\vec{v}}{1 +
2\gen{\vec{u},\vec{v}} + \norm{\vec{u}}^2\norm{\vec{v}}^2}
\end{equation}
forms a gyrogroup, the so-called \textit{M\"{o}bius gyrogroup}
\cite{AU2008FMG}. Similarly, $\sym{\B}$ is a gyro-group under the
operation given by
\begin{equation}\label{eqn: addition of Sym (B), Mobius}
\sig\oplus\tau = L_{\vec{u}\oplus_M\vec{v}}\circ(\alp\circ\bet)
\end{equation}
for all $\sig = L_{\vec{u}}\circ\alp$, $\tau =
L_{\vec{v}}\circ\bet$, where $\vec{u}$, $\vec{v}\in\B$ and $\alp$,
$\bet\in\symz{\B}$. \mbox{Applying} Theorem \ref{thm: Sym (G) acts
on Sym(G)/G} to the M\"{o}bius gyrogroup $(\B,\oplus_M)$, we have
the \mbox{following} \mbox{theorem}.

\begin{theorem}
The gyrogroup $\sym{\B}$ whose operation is given by \eqref{eqn:
addition of Sym (B), Mobius} acts transitively on the coset space
$\sym{\B}/\hat{\B}$ by left gyroaddition.
\end{theorem}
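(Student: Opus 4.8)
The plan is to observe that this theorem is an immediate specialization of Theorem \ref{thm: Sym (G) acts on Sym(G)/G}, exactly parallel to the Einstein case treated in the preceding paragraph. The only ingredient needed is that $(\B, \oplus_M)$ is a gyrogroup, which is recalled just before the statement (and established in \cite{AU2008FMG}). Once this is in hand, Theorem \ref{thm: Sym (G) acts on Sym(G)/G} applies with $G$ taken to be the M\"{o}bius gyrogroup $(\B, \oplus_M)$.

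First I would record that, with $G = (\B, \oplus_M)$, the gyrogroup operation on $\sym{G} = \sym{\B}$ furnished by the general formula \eqref{eqn: gyroaddition in Sym (G)} --- namely $\sig\oplus\tau = L_{\vec{u}\oplus\vec{v}}\circ(\alp\circ\bet)$ for factorizations $\sig = L_{\vec{u}}\circ\alp$ and $\tau = L_{\vec{v}}\circ\bet$ with $\vec{u},\vec{v}\in\B$ and $\alp,\bet\in\symz{\B}$ --- is literally the operation displayed in \eqref{eqn: addition of Sym (B), Mobius}, since here $\vec{u}\oplus\vec{v}$ denotes $\vec{u}\oplus_M\vec{v}$. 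Hence $\sym{\B}$ with the operation \eqref{eqn: addition of Sym (B), Mobius} is precisely the gyrogroup produced by Theorem 12 of \cite{TSKW2015ITG} for this choice of $G$, and $\hat{\B} = \cset{L_{\vec{u}}}{\vec{u}\in\B}$ is an L-subgyrogroup of it.

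Then I would simply invoke Theorem \ref{thm: Sym (G) acts on Sym(G)/G}, which asserts that $\sym{G}$ acts transitively on $\sym{G}/\hat{G}$ by left gyroaddition; substituting $G = (\B, \oplus_M)$ yields that $\sym{\B}$ acts transitively on $\sym{\B}/\hat{\B}$ by left gyroaddition, as claimed. There is essentially no obstacle: all the content sits in Theorem \ref{thm: Sym (G) acts on Sym(G)/G}, whose proof rests on the computation $\ominus\lamb\oplus\gyr{\sig,\tau}{\lamb}\in\hat{G}$ together with Lemma \ref{lem: Equivalence of gyr is identity on coset space} and Theorem \ref{thm: Orbit and stabilizer of left action}. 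The one point worth emphasizing --- and the reason the abstract theorem is the right tool --- is that the explicit shape of the M\"{o}bius addition formula \eqref{eqn: Mobius addition} is never used; only the gyrogroup axioms for $(\B,\oplus_M)$ are needed, so no new verification beyond "$(\B,\oplus_M)$ is a gyrogroup" is required.
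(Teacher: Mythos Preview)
Your proposal is correct and matches the paper's approach exactly: the paper does not even give a separate proof, simply noting before the statement that it follows by applying Theorem \ref{thm: Sym (G) acts on Sym(G)/G} to the M\"{o}bius gyrogroup $(\B,\oplus_M)$. Your write-up spells out this specialization in slightly more detail than the paper does, but the content is identical.
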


\section*{Acknowledgments}\addcontentsline{toc}{section}{Acknowledgments}
\par As a visiting researcher, the author is grateful to the
Department of \mbox{Mathematics}, North Dakota State University. He
would like to express his special gratitude to Professor Abraham
Ungar for his hospitality. He is also grateful to the anonymous
referee for his/her careful reading of the manuscript. The financial
support by Institute for Promotion of Teaching Science and
Technology (IPST), Thailand, via Development and Promotion of
Science and Technology Talents Project (DPST), is greatly
appreciated.\\[-1cm]

\section*{}\addcontentsline{toc}{section}{References}


\begin{thebibliography}{99}

\bibitem{MA2000FGT}
M.~Aschbacher, \emph{Finite group theory}, 2 ed., Cambridge
University Press,
  \mbox{Cambridge}, 2000.

\bibitem{BBAS2011FBL}
B.~Baumeister and A.~Stein, \emph{The finite {B}ruck loops}, J.
Algebra
  \textbf{330} (2011), 206--220.

\bibitem{ODES2013CMG}
O.~Demirel and E.~S. Seyrantepe, \emph{The cogyrolines of
{M}\"{o}bius
  gyrovector spaces are metric but not periodic}, Aequat. Math. \textbf{85}
  (2013), 185--200.

\bibitem{MF2014HAE}
M.~Ferreira, \emph{Harmonic analysis on the {E}instein gyrogroup},
J. Geom.
  Symmetry Phys. \textbf{35} (2014), 21--60.

\bibitem{MF2015HAM}
M.~Ferreira, \emph{Harmonic analysis on the {M}\"{o}bius gyrogroup},
J. Fourier
  Anal. Appl. \textbf{21} (2015), no.~2, 281--317.

\bibitem{MFGR2011MGC}
M.~Ferreira and G.~Ren, \emph{{M}\"{o}bius gyrogroups: A {C}lifford
algebra
  approach}, J. Algebra \textbf{328} (2011), 230--253.

\bibitem{BIJRJS2011STS}
B.~Im, J.-Y. Ryu, and J.~D.H. Smith, \emph{Sharply transitive sets
in
  quasigroup actions}, J. Algebr. Comb. \textbf{33} (2011), 81--93.

\bibitem{SKJL2013UBL}
S.~Kim and J.~Lawson, \emph{Unit balls, {L}orentz boosts, and
hyperbolic
  geometry}, \mbox{Results.} Math. \textbf{63} (2013), 1225--1242.

\bibitem{JS2003PRL}
J.~D.H. Smith, \emph{Permutation representations of loops}, J.
Algebra
  \textbf{264} (2003), no.~2, 342--357.

\bibitem{JS2006PRL}
J.~D.H. Smith, \emph{Permutation representations of left
quasigroups}, Algebra
  univers. \textbf{55} (2006), 387--406.

\bibitem{JS2007FLQ}
J.~D.H. Smith, \emph{Four lectures on quasigroup representations},
Quasigroups
  Related \mbox{Systems} \textbf{15} (2007), 109--140.

\bibitem{TS2015TAG}
T.~Suksumran, \emph{Essays in mathematics and its applications: In
honor of
  {V}ladimir {A}rnold}, P.~M. Pardalos and T.~M. Rassias (eds.),
  ch.~The algebra of gyrogroups: {C}ayley theorem,
  {L}agrange theorem, and isomorphism theorems, Springer, in press.

\bibitem{TSKW2014LTG}
T.~Suksumran and K.~Wiboonton, \emph{{L}agrange's theorem for
gyrogroups and
  the {C}auchy property}, Quasigroups Related Systems \textbf{22} (2014),
  no.~2, 283--294.

\bibitem{TSKW2015EGB}
T.~Suksumran and K.~Wiboonton, \emph{{E}instein gyrogroup as a
{B}-loop}, Rep. Math. Phys.
  \textbf{76} (2015), 63--74.

\bibitem{TSKW2015ITG}
T.~Suksumran and K.~Wiboonton, \emph{Isomorphism theorems for
gyrogroups and {L}-subgyrogroups}, J.
  Geom. Symmetry Phys. \textbf{37} (2015), 67--83.

\bibitem{AU1988TRP}
A.~A. Ungar, \emph{{T}homas rotation and parametrization of the
{L}orentz
  transformation group}, Found. Phys. Lett. \textbf{1} (1988), 57--89.

\bibitem{AU2001BEA}
A.~A. Ungar, \emph{Beyond the {E}instein addition law and its
gyroscopic {T}homas
  \mbox{Precession}: The theory of gyrogroups and gyrovector spaces},
  Fundamental \mbox{Theories} of Physics, vol. 117, Kluwer Academic, Dordrecht, 2001.

\bibitem{AU2007EVA}
A.~A. Ungar, \emph{{E}instein's velocity addition law and its
hyperbolic geometry},
  \mbox{Comput.} Math. Appl. \textbf{53} (2007), 1228--1250.

\bibitem{AU2008AHG}
A.~A. Ungar, \emph{Analytic hyperbolic geometry and {A}lbert
{E}instein's {S}pecial
  {T}heory of {R}elativity}, World Scientific, Hackensack, NJ, 2008.

\bibitem{AU2008FMG}
A.~A. Ungar, \emph{From {M}\"{o}bius to gyrogroups}, Amer. Math.
Monthly
  \textbf{115} (2008), no.~2, 138--144.

\end{thebibliography}
\end{document}